\newcommand{\R}{{\mathbb{R}}}
\newcommand{\E}{\mathrm{E}}
\newcommand{\HH}{\mathcal{H}}
\renewcommand{\P}{\mathrm{P}}
\renewcommand{\d}{\mathrm{d}}
\newcommand{\e}{\mathrm{e}}
\newcommand{\Var}{\text{\rm Var}}
\DeclareMathOperator{\Cov}{\text{\rm Cov}}
\font\tencyi=wncyi10
\title{Spatial ergodicity for SPDEs via Poincar\'e-type inequalities\thanks{%
	Research supported in part by  NSF grants DMS-1811181 (D.N.) and DMS-1855439 (D.K.).}}
\author{Le Chen\\University of Nevada, Las Vegas\\\texttt{le.chen@unlv.edu}\\
	\and
		Davar Khoshnevisan\\University of Utah\\\texttt{davar@math.utah.edu}\\
	\and\and
		David Nualart\\University of Kansas\\\texttt{nualart@ku.edu}\\
	\and
		Fei Pu\\University of Utah\\\texttt{pu@math.utah.edu}
	}
\date{July 24, 2019}
\begin{document}
\newtheorem{stat}{Statement}[section]
\newtheorem{proposition}[stat]{Proposition}
\newtheorem*{prop}{Proposition}
\newtheorem{corollary}[stat]{Corollary}
\newtheorem{theorem}[stat]{Theorem}
\newtheorem{lemma}[stat]{Lemma}
\theoremstyle{definition}
\newtheorem{definition}[stat]{Definition}
\newtheorem*{cremark}{Remark}
\newtheorem{remark}[stat]{Remark}
\newtheorem*{OP}{Open Problem}
\newtheorem{example}[stat]{Example}
\newtheorem{nota}[stat]{Notation}
\numberwithin{equation}{section}
\maketitle

\begin{abstract}
	Consider a parabolic stochastic PDE of the form
	$\partial_t u=\frac12\Delta u + \sigma(u)\eta$, where
	$u=u(t\,,x)$ for $t\ge0$ and $x\in\R^d$, $\sigma:\R\to\R$
	is Lipschitz continuous and non random, and $\eta$ is a centered Gaussian
	noise that is white in time and colored in space, with a possibly-signed
	homogeneous spatial correlation $f$. If, in addition,
	$u(0)\equiv1$, then we prove that, under a mild
	decay condition on $f$, the process $x\mapsto u(t\,,x)$ is stationary and ergodic at
	all times $t>0$.  It has been argued that, when coupled with
	moment estimates,  spatial ergodicity of $u$
	teaches us about the intermittent nature of the solution to such
	SPDEs \cite{BertiniCancrini1995,KhCBMS}.
	Our results provide rigorous justification
	of  such discussions.
	
	Our methods hinge on novel facts from harmonic analysis and functions
	of positive type, as well as from Malliavin calculus and Poincar\'e inequalities.
	We further showcase the utility of these Poincar\'e inequalities
	by: (a) describing conditions that ensure that the random field $u(t)$
	is mixing for every $t>0$; and by (b) giving a quick proof of a conjecture of Conus et al
	\cite{CJK12} about the ``size'' of the intermittency islands of $u$.
	
	The ergodicity and the mixing results of this paper are sharp, as they include the classical theory
	of Maruyama \cite{Maruyama} (see also Dym and McKean \cite{DymMcKean})
	in the simple setting where the nonlinear term $\sigma$ is a constant function.
\end{abstract}

\bigskip\bigskip

\noindent{\it \noindent MSC 2010 subject classification:}
 60H15, 37A25, 60H07, 60G10.\\

\noindent{\it Keywords:}
SPDEs, ergodicity, Malliavin calculus, Poincar\'e-type inequality.
	\\
	
\noindent{\it Running head:} Spatial ergodicity for SPDEs.

\newpage
{
  \hypersetup{linkcolor=black}
  \tableofcontents
}

\section{Introduction}

The principal aim of this article is to establish relatively simple-to-check, but also
broad, conditions under which the solution
$u=\{u(t\,,x)\}_{t\ge0\,,x\in\R^d}$ to a parabolic stochastic PDE is
\emph{spatially stationary and ergodic}.
Equivalently, we would like to know conditions under which
$u(t)$ is stationary and ergodic, in its spatial variable $x$, at all times $t>0$. This problem,
and its relation to intermittency, have been
mentioned informally  for example
in the introduction of Bertini and Cancrini \cite{BertiniCancrini1995} (see also
\cite[Ch.\ 7]{KhCBMS}). This problem is also connected somewhat loosely to novel
applications of Malliavin calculus to central limit theorems
for parabolic SPDEs; see Huang et al \cite{HuangNualartViitasaari2018,HuangNualartViitasaariZheng2019}.

In order for spatial ergodicity to be a meaningful property, one needs to consider parabolic SPDEs
for which the solution is {\it a priori} a stationary process in its spatial variable. Thus, we study the
following archetypal parabolic problem:
\begin{equation}\label{SHE}
\begin{cases}
\displaystyle
\partial_t  u = \frac12\Delta u + \sigma(u)\eta
		&\text{on $(0\,,\infty)\times\R^d$},\\[0.5em]
\displaystyle	u(0)\equiv 1,
\end{cases}\end{equation}
where $\sigma\not\equiv0$ is Lipschitz continuous and non random, and $\eta$
denotes a generalized, centered, Gaussian random field with
covariance form
\[
	\E\left[\eta(t\,,x)\eta(s\,,y)\right] = \delta_0(t-s) f(x-y)
	\qquad\text{for all $s,t\ge0$ and $x,y\in\R^d$},
\]
and where $f$ is a nonnegative-definite distribution  on $\R^d$.
Somewhat more formally, the Wiener-integral process
$\psi \mapsto\eta(\psi) :=\int_{\R_+\times\R^d}
 \psi(t\,,x)\,\eta(\d t\,\d x)$ is linear a.s.\ and satisfies
\begin{equation}\label{Cov:eta}
	\text{\rm Cov}\left( \eta( \psi _1)\,,\eta( \psi _2)\right) = \int_0^\infty
	\left\langle  \psi_1(t)\,, \psi_2(t)*f\right\rangle_{L^2(\R^d)}\d t,
\end{equation}
for every $\psi_1,\psi_2$ in
the space $C_c(\R_+\times\R^d)$ of all compactly-supported, continuous,
real-valued functions on $\R_+\times\R^d$.

The solution theory for \eqref{SHE} is particularly well established when
the spatial correlation $f$ of the noise $\eta$
belongs to the space $\mathfrak{M}_+(\R^d)$
of all nonnegative-definite tempered Borel measures on $\R^d$. In that
case, it is well known that the Fourier transform is a 1-1 linear mapping from
$\mathfrak{M}_+(\R^d)$ to itself. That is, $f\in\mathfrak{M}_+(\R^d)$
if and only if $\hat{f}\in\mathfrak{M}_+(\R^d)$,\footnote{%
	Here and throughout,
	we normalize the Fourier transform so that
	$\hat{\psi}(x) = \int_{\R^d}\exp(ix\cdot y) \psi(y)\,\d y$ for all $\psi\in L^1(\R^d)$
	and $x\in\R^d$.}
and
\begin{equation}\label{Fourier_transform}
	\int_{\R^d}\phi\,\d f
	= \int_{\R^d} \overline{\hat{\phi}}\,\d \hat{f}
	= \int_{\R^d} \hat{\phi}\,\d \hat{f}
	\qquad\text{for all $\phi\in\mathscr{S}(\R^d)$,}
\end{equation}
where $\mathscr{S}(\R^d)$ denotes the usual space of all test functions
of rapid decrease on $\R^d$.
The measure $\hat{f}$ is sometimes called the \emph{spectral measure}
of $f$. And the theory of Dalang \cite{Dalang1999} implies that if
\begin{equation}\label{Dalang}
	f\in\mathfrak{M}_+(\R^d),
	\quad\text{and}\quad
	\int_{\R^d}\frac{\hat{f}(\d x)}{\lambda+\|x\|^2}<\infty
	\quad\text{for one --- hence all --- $\lambda>0$},
\end{equation}
then \eqref{SHE} has a random-field solution $u$ that is unique subject to
the following integrability condition:
\begin{equation}\label{cond:moment}
	\sup_{t\in[0,T]}\sup_{x\in\R^d}\E\left(|u(t\,,x)|^k\right)<\infty
	\qquad\text{for every $T>0$ and $k\ge2$}.
\end{equation}
Moreover, $(t\,,x)\mapsto u(t\,,x)$ is both $L^k(\Omega)$-continuous and
weakly stationary in $x$ for every $t>0$.
Furthermore, it is known that Condition \eqref{Dalang} is
necessary and sufficient for example when $\sigma$ is a non-zero constant;
see Dalang \cite{Dalang1999}, as well as Peszat and Zabczyk \cite{PeszatZabczyk2000}.

Let
\begin{equation}\label{ball}
	\mathbb{B}_r := \left\{ x\in\R^d:\, \|x\|\le r\right\}\qquad\text{for every $r>0$}.
\end{equation}
Our first result is a detailed description of the spatial ergodicity of $f$
in the case that $f$ satisfies Dalang's condition \eqref{Dalang}.

\begin{theorem}\label{th:1}
	If $f$ satisfies  \eqref{Dalang}, then $u(t)$ is a stationary random field for every $t>0$.
	Moreover, the following are valid:
	\begin{compactenum}
		\item[1.] If $\hat{f}\{0\}=0$, then $u$ is spatially ergodic;
		\item[2.] $\hat{f}\{0\}=0$ iff $f(\mathbb{B}_r)=o(r^d)$ as $r\to\infty$;
		\item[3.] If $\sigma$ is a nonzero constant, then $\hat{f}\{0\}=0$
			iff $u$ is spatially ergodic;
		\item[4.] $\hat{f}\{0\}>0$ iff $\hat{f}$ has an atom.
	\end{compactenum}
\end{theorem}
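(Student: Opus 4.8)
My plan is to prove the four assertions in order; assertion~1 carries the analytic content, via Malliavin calculus and a Poincar\'e-type inequality, while assertions~2--4 reduce to harmonic analysis on $\R^d$ together with the classical Gaussian ergodic theory of Maruyama. Strict stationarity of $x\mapsto u(t\,,x)$ follows from translation invariance of \eqref{SHE} and uniqueness: $u(0)\equiv1$ and the law of $\eta$ are shift invariant, so $\{u(t\,,x+z)\}$ and $\{u(t\,,x)\}$ solve the same equation and agree in law. For assertion~1 I would use the mean-ergodic-theorem reduction: $u(t)$ is ergodic iff, for every $Y$ in a total subset of $L^2(\Omega\,,\sigma(u(t)))$, the averages $|\mathbb{B}_R|^{-1}\int_{\mathbb{B}_R}(\tau_zY)\,\d z$ converge in $L^2(\Omega)$ to $\E Y$, and it suffices to check this for $Y=G(u(t\,,x_1)\,,\dots,u(t\,,x_n))$ with $G$ bounded and Lipschitz. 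Expanding the second moment and using stationarity, it is enough to show $|\mathbb{B}_R|^{-1}\int_{\mathbb{B}_{2R}}|\Cov(G(u(t\,,\bm x))\,,G(u(t\,,\bm x+h)))|\,\d h\to0$, where $\bm x=(x_1\,,\dots,x_n)$. By a Poincar\'e-type inequality (bounding $|\Cov(F\,,G)|$ by $\int_0^t\!\int\!\int\|D_{s,y}F\|_k\|D_{s,y'}G\|_k f(y-y')\,\d y\,\d y'\,\d s$) together with the Malliavin-derivative estimate $\|D_{s,y}u(t\,,x)\|_k\le C(t\,,k)\,p_{t-s}(x-y)$ --- $p$ the heat kernel --- the covariance above is dominated by a constant times $\sum_{i,j}(f*\Phi_t)(h+x_j-x_i)$, where $\Phi_t:=\int_0^t p_{2(t-s)}\,\d s\ge0$ has Fourier transform $\hat\Phi_t(\xi)=(1-\e^{-t\|\xi\|^2})/\|\xi\|^2$. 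The decisive point is that Dalang's condition \eqref{Dalang} is precisely what makes $\nu_t:=\hat\Phi_t\cdot\hat f$ a \emph{finite} measure (with $\nu_t\{0\}=t\,\hat f\{0\}$), and $f*\Phi_t$ is its inverse Fourier transform; so a Fej\'er-type computation gives, for fixed $c$,
\[
	\frac{1}{|\mathbb{B}_R|}\int_{\mathbb{B}_R}(f*\Phi_t)(h+c)\,\d h
	=\frac{1}{(2\pi)^d|\mathbb{B}_R|}\int_{\R^d}\e^{-ic\cdot\xi}\,\overline{\widehat{\mathbf{1}_{\mathbb{B}_R}}(\xi)}\,\nu_t(\d\xi)
	\longrightarrow\frac{\nu_t\{0\}}{(2\pi)^d}=\frac{t\,\hat f\{0\}}{(2\pi)^d}\qquad(R\to\infty)
\]
by dominated convergence, since $|\widehat{\mathbf{1}_{\mathbb{B}_R}}(\xi)|/|\mathbb{B}_R|\le1$ tends pointwise to $\mathbf 1\{\xi=0\}$. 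When $\hat f\{0\}=0$ this vanishes, the averages collapse in $L^2$ to the constant $\E[G(u(t\,,\bm x))]$, and ergodicity follows.

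For assertions~2 and~4 the crux is the identity, with $\chi(x)=(2\pi)^{-d/2}\e^{-\|x\|^2/2}$ and any $\xi_0\in\R^d$,
\[
	\hat f\{-\xi_0\}=\lim_{R\to\infty}R^{-d}\int_{\R^d}\chi(z/R)\,\e^{i\xi_0\cdot z}\,f(\d z),
\]
obtained by rewriting the right side, via \eqref{Fourier_transform}, as $\int_{\R^d}\hat\chi(R(\xi+\xi_0))\,\hat f(\d\xi)$ and passing to the limit; dominated convergence applies because $\hat\chi(R(\xi+\xi_0))\le\e^{-\|\xi+\xi_0\|^2/2}\lesssim(1+\|\xi\|^2)^{-1}$ for $R\ge1$, which is $\hat f$-integrable by \eqref{Dalang}. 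Taking $\xi_0=0$ gives $\hat f\{0\}=\lim_R R^{-d}\int\chi(z/R)\,f(\d z)$, and for assertion~2 I would sandwich this: from below, $\chi(z/R)\ge(2\pi)^{-d/2}\e^{-1/2}$ on $\mathbb{B}_R$ gives $R^{-d}f(\mathbb{B}_R)\lesssim R^{-d}\int\chi(z/R)\,f(\d z)$; from above, splitting $\R^d$ into the shells $\mathbb{B}_{(k+1)R}\setminus\mathbb{B}_{kR}$ and using $\chi(z/R)\lesssim\e^{-k^2/2}$ there gives $R^{-d}\int\chi(z/R)\,f(\d z)\lesssim\sum_{k\ge0}\e^{-k^2/2}(k+1)^d\,f(\mathbb{B}_{(k+1)R})/((k+1)R)^d$, so $f(\mathbb{B}_r)=o(r^d)$ forces the limit to be $0$ by dominated convergence in $k$. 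Hence $\hat f\{0\}=0\iff f(\mathbb{B}_r)=o(r^d)$. Assertion~4 is then immediate: because $f$ is a nonnegative measure, $|R^{-d}\int\chi(z/R)\e^{i\xi_0\cdot z}f(\d z)|\le R^{-d}\int\chi(z/R)f(\d z)$, so letting $R\to\infty$ yields $\hat f\{\xi_0\}\le\hat f\{0\}$ for every $\xi_0$; thus the origin carries the heaviest atom of $\hat f$, and $\hat f$ has an atom iff $\hat f\{0\}>0$.

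Assertion~3: the forward implication is the special case of assertion~1. For the converse, if $\sigma\equiv c\neq0$ then $u(t\,,x)=1+c\int_0^t\!\int_{\R^d}p_{t-s}(x-y)\,\eta(\d s\,\d y)$ is Gaussian, so $x\mapsto u(t\,,x)$ is a stationary Gaussian field with spectral measure $c^2\nu_t=c^2\hat\Phi_t\cdot\hat f$; since $\hat\Phi_t(0)=t>0$, this has an atom at $0$ whenever $\hat f\{0\}>0$, and then $u(t)$ is not ergodic by Maruyama's theorem \cite{Maruyama} --- equivalently, the spatial averages of $u(t\,,\cdot)-1$ converge in $L^2$ to a non-degenerate, shift-invariant Gaussian random variable. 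Combined with assertion~1 this gives the equivalence for every $t>0$.

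I expect the main difficulty to lie in assertion~1, and within it in the analytic input rather than the harmonic analysis: establishing the Malliavin-derivative bound and the precise Poincar\'e-type covariance inequality under only \eqref{Dalang}, and matching the kernel $\Phi_t$ with Dalang's integrability so that $\nu_t$ is finite and the Fej\'er limit is exactly $t\,\hat f\{0\}/(2\pi)^d$. The remaining steps --- the Fej\'er-type limit, and the elementary sandwich and triangle-inequality arguments behind assertions~2 and~4 --- are routine, but they do rely on $f$ being a genuine nonnegative measure (used in the triangle inequality for assertion~4) and on the $\hat f$-integrability supplied by \eqref{Dalang} at each application of dominated convergence.
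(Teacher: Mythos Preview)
Your proposal is correct and follows essentially the same architecture as the paper: stationarity via the solution map and shift invariance; ergodicity (assertion~1) via the Clark--Ocone/Poincar\'e inequality combined with the Malliavin-derivative bound $\|D_{s,y}u(t,x)\|_k\lesssim \bm{p}_{t-s}(x-y)$; the harmonic-analytic assertions~2 and~4 via an approximate identity; and assertion~3 by explicit computation in the Gaussian case.

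The differences are organizational rather than substantive. For assertion~1, the paper first packages the Malliavin input as a standalone Poincar\'e inequality,
\[
	\Var\left(\fint_{[0,N]^d}\mathcal G(x)\,\d x\right)\lesssim N^{-d}f\bigl([-N,N]^d\bigr),
\]
obtained by bounding $(I_N*\tilde I_N*f)(0)$ using the pointwise sandwich $(2N)^{-d}\mathbf 1_{[-N/2,N/2]^d}\le I_N*\tilde I_N\le N^{-d}\mathbf 1_{[-N,N]^d}$, and then invokes assertion~2 to conclude. You instead bound the covariance by $f*\Phi_t$ and pass to the Fourier side using that $\nu_t=\hat\Phi_t\cdot\hat f$ is a finite measure, so that the Fej\'er average picks out $\nu_t\{0\}=t\,\hat f\{0\}$ directly. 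Your route is a bit more streamlined for proving ergodicity alone; the paper's route yields the Poincar\'e inequality as an independent statement, which it later reuses for mixing and the intermittency application. For assertions~2 and~4 the paper uses the Fej\'er/tent kernel $I_N*\tilde I_N$ where you use a Gaussian mollifier; both arguments are the same approximate-identity computation, and both lean on Dalang's condition for dominated convergence. For the converse in assertion~3 the paper computes $\Var\bigl(\fint_{[0,N]^d}u(t,x)\,\d x\bigr)\to c_0^2\,t\,\hat f\{0\}$ directly, which is exactly the ``equivalently'' you offer alongside the appeal to Maruyama; note that Maruyama's theorem as cited is one-parameter, so the direct $L^2$ computation is the safer justification in $\R^d$. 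Finally, you correctly identify the main effort as establishing the derivative bound $\|D_{s,y}u(t,x)\|_k\lesssim \bm{p}_{t-s}(x-y)$ under only \eqref{Dalang}; the paper devotes \S\ref{sec:diff} to this.
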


If $f$ is a \emph{function} that satisfies \eqref{Dalang}, then
part 2 of Theorem \ref{th:1} can be recast as follows:
\begin{equation}\label{HfH}
	\hat{f}\{0\}=0
	\quad\text{if and only if}\quad
	\lim_{r\to\infty} \frac{1}{|\mathbb{B}_r|}\int_{\mathbb{B}_r} f(x)\,\d x=0.
\end{equation}
Thus, we see that when $f$ is a function, $\hat{f}\{0\}=0$ iff the asymptotic average of $f$ is zero.

\begin{remark}
	Maruyama \cite{Maruyama} has shown that if a
	1-parameter, stationary Gaussian process $X=\{X_t\}_{t\in\R}$
	has a continuous covariance function $\varrho$, then $X$ is
	ergodic if and only if $\hat{\varrho}$ has no atoms;
	see also Dym and McKean \cite[\S3.9]{DymMcKean}.
	When $d=1$, Part 3 of Theorem \ref{th:1} can be derived easily
	by verifying Maruyama's  condition, using part 4 of Theorem \ref{th:1};
	part 4 of Theorem \ref{th:1} and its connection to mean-zero
	property \eqref{HfH} of $f$ appear to be new, at this level of generality,
	even in the Gaussian case.
\end{remark}

There is also a literature on well-posedness and regularity theory for \eqref{SHE}
when $f$ is a distribution that is
not necessarily in $\mathfrak{M}_+(\R^d)$, though such results tend to be
applicable in a more specialized setting as compared with the
theory of Dalang \cite{Dalang1999}; see for example
\cite{ChenHuNualart2017,ChenHuang2019,%
HuHuangLeNualartTindel2017,HuHuangNualartTindel2015,%
HuangLeNualart2017}. Henceforth, we consider the case that $f$ is
a nonnegative-definite, but possibly signed, \emph{function} of the form,
\begin{equation}\label{f=h*h}
	f = h*\tilde{h},
\end{equation}
where $h:\R^d\to\R$ has enough regularity to ensure among other things
that the  convolution in \eqref{f=h*h} is well defined, and $\tilde{h}(x):=h(-x)$ defines
the reflection of $h$.
In this case, \eqref{Cov:eta} is equivalent to the elegant formula,
\[
	\Var\left( \eta( \psi )\right) = \int_0^\infty
	\left\| \psi(t)*h\right\|_{L^2(\R^d)}^2\d t,
	\qquad\text{
	valid for all $\psi\in C_c(\R_+\times\R^d)$.}
\]

In this context, we prove that a mild integrability condition on $h$ implies that $|h|\in H_{-1}(\R^d)$
--- see \eqref{cond:omega} and Lemma \ref{lem:Dalang:Lp} ---
which in turn implies the existence of a
spatially stationary random-field solution $u$ to \eqref{SHE} that is unique
subject to \eqref{cond:moment}; see Theorem \ref{th:exist}.
More significantly, we prove that the ensuing Condition \eqref{cond:omega}
on $h$ ensures that $u$
is spatially ergodic.

In any case, the end result is the following theorem.\footnote{%
	For a very brief
	discussion of relevant measurability issues, see Remark \ref{rem:doob} below.}
In order to present that result we first recall \eqref{ball}, and
then define for every $r>0$,
\begin{equation}\label{eq:omega}
	\omega_d(r) := \begin{cases}
		1&\text{if $d=1$},\\
		r\log_+(1/r)&\text{if $d=2$},\\
		r&\text{if $d\ge3$,}
	\end{cases}
\end{equation}
where $\log_+(z):=\log(z\vee\e)$ for all $z\in\R$.

\begin{theorem}\label{th:main:intro}
	Assume that the spatial correlation function $f:\R^d\to\R$ satisfies \eqref{f=h*h},
	where $h\in L^p_{\text{\it loc}}(\R^d)$ for some number $p>1$, and satisfies
	\begin{equation}\label{cond:omega}
		\int_0^1 \left( \|h\|_{L^p(\mathbb{B}_r)} \|h\|_{L^q(\mathbb{B}_r^c)}
		+ \|h\|_{L^2(\mathbb{B}_r^c)}^2\right)\omega_d(r)\,\d r<\infty
		\quad\text{with}\quad
		\text{$q:=\frac{p}{p-1}.$}
	\end{equation}
	Then, the SPDE \eqref{SHE} has a spatially stationary and ergodic random-field solution
	$u$ that is unique subject to the integrability condition \eqref{cond:moment}.
\end{theorem}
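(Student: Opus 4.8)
The plan is to settle existence, uniqueness and stationarity quickly, and then devote the bulk of the argument to spatial ergodicity. For the first part I would observe that Lemma~\ref{lem:Dalang:Lp} turns Condition~\eqref{cond:omega} into the statement $|h|\in H_{-1}(\R^d)$. Since $|f|=|h*\tilde h|\le|h|*|\tilde h|$ pointwise and $|h|*|\tilde h|$ is a nonnegative-definite function whose spectral density $|\widehat{|h|}\,|^2$ satisfies Dalang's integrability condition, the solution theory --- packaged in Theorem~\ref{th:exist} --- produces a random-field solution $u$ of \eqref{SHE}, unique subject to \eqref{cond:moment}, with $x\mapsto u(t,x)$ stationary for every $t>0$ and $(t,x)\mapsto u(t,x)$ being $L^k(\Omega)$-continuous for each $k\ge2$; moreover, the accompanying Malliavin-calculus estimates place each $u(t,x)$ in the Malliavin--Sobolev space $\mathbb D^{1,2}$ with $\|D_{s,y}u(t,x)\|_{L^2(\Omega)}\le C_t\,p_{t-s}(x-y)$, where $p_r$ denotes the Gaussian heat kernel on $\R^d$. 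It then remains only to prove ergodicity.

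\emph{Reduction.} Fix $t>0$. By a standard criterion for ergodicity of a stationary random field, it suffices to show that for every $m\ge1$, all $x_1,\dots,x_m\in\R^d$, and every bounded Lipschitz $g\colon\R^m\to\R$, the cylinder functional $F:=g(u(t,x_1),\dots,u(t,x_m))$ satisfies $|\mathbb B_R|^{-1}\int_{\mathbb B_R}\Cov(F,\tau_zF)\,\d z\to0$ as $R\to\infty$, where $\tau_zF=g(u(t,x_1+z),\dots,u(t,x_m+z))$ by spatial stationarity: such $F$ are dense in $L^2$ of the $\sigma$-field generated by $u(t,\cdot)$, so the mean ergodic theorem then forces the invariant $\sigma$-field to be trivial. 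To bound the covariance I would apply the Poincar\'e-type inequality of this paper to $F$ and $\tau_zF$; together with the chain rule $\|D_{s,y}F\|_{L^2(\Omega)}\le\lip(g)\sum_i\|D_{s,y}u(t,x_i)\|_{L^2(\Omega)}$, the heat-kernel bound above, and a change of variables, this gives
\[
	\bigl|\Cov(F,\tau_zF)\bigr|\le C\sum_{i,j=1}^m\int_0^t\bigl(p_{t-s}*|f|*p_{t-s}\bigr)(x_i-x_j-z)\,\d s=:C\sum_{i,j=1}^m\Phi(x_i-x_j-z).
\]

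\emph{The crux and the conclusion.} The key point is that $\Phi\in C_0(\R^d)$, i.e.\ $\Phi(v)\to0$ as $\|v\|\to\infty$. Using $|f|\le|h|*|\tilde h|$ pointwise, $0\le\Phi\le\Psi:=\int_0^t p_{t-s}*(|h|*|\tilde h|)*p_{t-s}\,\d s$, and
\[
	\widehat\Psi(\xi)=\int_0^t\e^{-s\|\xi\|^2}\bigl|\widehat{|h|}(\xi)\bigr|^2\,\d s=\frac{1-\e^{-t\|\xi\|^2}}{\|\xi\|^2}\bigl|\widehat{|h|}(\xi)\bigr|^2\le\frac{C_t}{1+\|\xi\|^2}\bigl|\widehat{|h|}(\xi)\bigr|^2,
\]
which is integrable on $\R^d$ exactly because $|h|\in H_{-1}(\R^d)$; the Riemann--Lebesgue lemma then yields $\Psi\in C_0(\R^d)$, hence $\Phi\in C_0(\R^d)$. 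A Ces\`aro argument now finishes the proof: for fixed $i,j$ and $R\ge\|x_i-x_j\|$, substituting $w=x_i-x_j-z$ gives $\int_{\mathbb B_R}\Phi(x_i-x_j-z)\,\d z\le\int_{\mathbb B_{2R}}\Phi(w)\,\d w$, so that $|\mathbb B_R|^{-1}\int_{\mathbb B_R}\Phi(x_i-x_j-z)\,\d z\le 2^d\,|\mathbb B_{2R}|^{-1}\int_{\mathbb B_{2R}}\Phi\to0$ since $\Phi$ vanishes at infinity; summing over $i,j$ gives the claim, and with it the ergodicity of $u(t)$.

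\emph{Main obstacle.} I expect the harmonic-analysis step to be the heart of the matter: upgrading ``$f$ is signed, possibly non-integrable, and only $|h|\in H_{-1}$'' to ``$\Phi\in C_0$'' hinges on replacing $|f|$ by the nonnegative-definite majorant $|h|*|\tilde h|$ and observing that a single parabolic smoothing, combined with the $H_{-1}$ bound, manufactures an $L^1$ spectral density so that Riemann--Lebesgue applies. The pitfall to watch is ensuring that $|h|*|\tilde h|$ and $\widehat{|h|}$ are legitimate objects and that the Fourier manipulations are licit --- precisely what Lemma~\ref{lem:Dalang:Lp} secures. The remaining technical burden --- that $u(t,x)\in\mathbb D^{1,2}$ with the stated heat-kernel control on $D_{s,y}u(t,x)$, delicate because $f$ is only barely integrable --- is subsumed in Theorem~\ref{th:exist} and the regularity estimates surrounding it.
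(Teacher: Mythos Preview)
Your proposal is correct, and the overall architecture---existence/uniqueness via Lemma~\ref{lem:Dalang:Lp} and Theorem~\ref{th:exist}, stationarity via Lemma~\ref{lem:stat}, and ergodicity via a Clark--Ocone/Poincar\'e covariance bound combined with the heat-kernel control on $\|D_{s,y}u(t,x)\|_k$ from Theorem~\ref{malliavin_derivative}---matches the paper's. The genuine difference lies in the decay step.

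The paper establishes, by the real-variable splitting estimates of Proposition~\ref{pr:PD}, that $|h|*|\tilde h|$ is a function of positive type and in particular vanishes at infinity; it then packages the Malliavin estimate into the uniform Poincar\'e inequality of Theorem~\ref{th:Poincare:bis}, whose right-hand side $N^{-d}\int_{[-N,N]^d}|h|*|\tilde h|$ is a Ces\`aro average of a $C_0$ function and hence tends to zero. You instead bound the pointwise covariance by $\Psi=\int_0^t \bm{p}_{2s}*(|h|*|\tilde h|)\,\d s$ and argue $\Psi\in C_0(\R^d)$ via $\widehat\Psi\in L^1$ and Riemann--Lebesgue, using only $|h|\in H_{-1}(\R^d)$. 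This Fourier shortcut is legitimate here: since $h\in\mathcal G_p(\R^d)$ forces $|h|\bm 1_{\mathbb B_1}\in L^1$ and $|h|\bm 1_{\mathbb B_1^c}\in L^2$ (Lemma~\ref{lem:F_p}), $\widehat{|h|}$ is an honest locally-$L^2$ function, so writing $\widehat\Psi(\xi)=\|\xi\|^{-2}(1-\e^{-t\|\xi\|^2})|\widehat{|h|}(\xi)|^2$ and invoking the $H_{-1}$ bound is justified; you are right that this is the point requiring care. Your route is more economical for ergodicity alone, whereas the paper's real-variable estimates (Proposition~\ref{pr:PD}) yield the quantitative Poincar\'e inequality of Theorem~\ref{th:Poincare:bis} as an intermediate product, which is then reused for the mixing and intermittency applications in \S\ref{sec:Appl}. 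Indeed, your $\Psi\in C_0$ argument is close in spirit to what the paper does separately in the proof of Corollary~\ref{mix2}.
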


\begin{remark}
	In order to be concrete, we have selected the initial data to be identically 1 in Theorems
	\ref{th:1} and \ref{th:main:intro}.
	The same arguments show that Theorems \ref{th:1} and \ref{th:main:intro}
	continue to hold when the initial data is an arbitrary
	stationary random field $\{u(0\,,x)\}_{x\in\R^d}$ that is independent of $\eta$
	and is continuous in $L^k(\Omega)$ for every real number $k\ge1$.
\end{remark}

In the case $f$ is signed and satisfies \eqref{f=h*h}, the following
presents easy-to-check conditions on $h$ in order for \eqref{SHE} to have a
unique random-field solution that is spatially
ergodic (as well as stationary).

\begin{corollary}\label{co:main:intro}
	Suppose that $h:\R^d\to\R$ is Borel measurable, and either that $h\in L^2(\R^d)$ or
	that there exist $\alpha\in(0\,,d\wedge 2)$
	and $\beta>0$ such that
	\begin{equation}\label{cond:co:main:intro}
		\sup_{\|w\|<1} \|w\|^{(d+\alpha)/2}|h(w)| <\infty
		\quad\text{and}\quad
		\sup_{\|z\|>1} \|z\|^{(d+\beta)/2}|h(z)| <\infty.
	\end{equation}
	Then, \eqref{SHE} has a random-field solution $u$ that is unique
	subject to the moment condition \eqref{cond:moment}.
	Moreover, $u(t)$ is stationary and ergodic for every $t>0$.
\end{corollary}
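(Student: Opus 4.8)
The plan is to deduce Corollary~\ref{co:main:intro} from Theorem~\ref{th:main:intro}: once the latter is available there is nothing probabilistic left to do, and the task reduces to exhibiting, under each of the two concrete hypotheses on $h$, an exponent $p>1$ for which $h\in L^p_{\text{\it loc}}(\R^d)$ and the integrability condition \eqref{cond:omega} holds (with $q=p/(p-1)$). Concretely, one only needs to bound, for $r\in(0\,,1)$, the two quantities $\|h\|_{L^p(\mathbb{B}_r)}\|h\|_{L^q(\mathbb{B}_r^c)}$ and $\|h\|_{L^2(\mathbb{B}_r^c)}^2$ and then check that the resulting bound is integrable against $\omega_d$ on $(0\,,1)$; the conclusions of Theorem~\ref{th:main:intro} --- existence of a random-field solution, uniqueness subject to \eqref{cond:moment}, and spatial stationarity and ergodicity --- then transfer verbatim.

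First I would dispose of the case $h\in L^2(\R^d)$ by taking $p=q=2$: then $h\in L^2_{\text{\it loc}}(\R^d)$ trivially, while $\|h\|_{L^2(\mathbb{B}_r)}\|h\|_{L^2(\mathbb{B}_r^c)}\le\|h\|_{L^2(\R^d)}^2$ and $\|h\|_{L^2(\mathbb{B}_r^c)}^2\le\|h\|_{L^2(\R^d)}^2$, so the integrand in \eqref{cond:omega} is at most $2\|h\|_{L^2(\R^d)}^2\,\omega_d(r)$. Since $\int_0^1\omega_d(r)\,\d r<\infty$ in every dimension (indeed $\omega_1\equiv1$, and $\omega_d(r)\to0$ as $r\to0$ for $d\ge2$), condition \eqref{cond:omega} holds and Theorem~\ref{th:main:intro} applies.

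For the case \eqref{cond:co:main:intro}, the idea is to choose $p$ in the interval $\left(1\,,\tfrac{2d}{d+\alpha}\right)$, which is nonempty precisely because $\alpha<d$, so that $p(d+\alpha)/2<d$. This choice simultaneously places $h$ in $L^p_{\text{\it loc}}(\R^d)$ --- the only singularity of $h$ being the $\|w\|^{-(d+\alpha)/2}$-type blow-up at the origin, which is $L^p$ near the origin since $p(d+\alpha)/2<d$ --- and yields $\|h\|_{L^p(\mathbb{B}_r)}\lesssim r^{d/p-(d+\alpha)/2}$ for $r\in(0\,,1)$. For the conjugate exponent one then has $q>\tfrac{2d}{d-\alpha}$, whence $q(d+\alpha)/2>d$ and, because $\tfrac{2d}{d-\alpha}>\tfrac{2d}{d+\beta}$ (as $\beta>-\alpha$), also $q(d+\beta)/2>d$; the latter makes $\int_{\mathbb{B}_1^c}|h|^q<\infty$ --- using the $\|z\|^{-(d+\beta)/2}$-decay at infinity --- and the former gives $\|h\|_{L^q(\mathbb{B}_r^c)}\lesssim r^{d/q-(d+\alpha)/2}$. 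Multiplying the two estimates and using $1/p+1/q=1$ produces $\|h\|_{L^p(\mathbb{B}_r)}\|h\|_{L^q(\mathbb{B}_r^c)}\lesssim r^{-\alpha}$, and the same pointwise bounds give $\|h\|_{L^2(\mathbb{B}_r^c)}^2\lesssim r^{-\alpha}$ as well. It then remains to verify the elementary one-variable estimate $\int_0^1 r^{-\alpha}\,\omega_d(r)\,\d r<\infty$, which holds iff $\alpha<1$ when $d=1$ and iff $\alpha<2$ when $d\ge2$ --- that is, exactly when $\alpha\in(0\,,d\wedge2)$, as assumed --- so \eqref{cond:omega} holds and Theorem~\ref{th:main:intro} again applies.

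The only point that requires a little care is arranging a single $p$ that controls $h$ near the origin (which forces $p$ to be small) and the $L^q$-tail of $h$ at infinity (which forces $q$ to be small, i.e.\ $p$ not too close to $1$); the key observation is that the constraint $p<\tfrac{2d}{d+\alpha}$ already forces $q>\tfrac{2d}{d-\alpha}>\tfrac{2d}{d+\beta}$, so no extra lower bound on $p$ beyond $p>1$ is needed and the argument runs for every admissible $p$. Beyond this exponent bookkeeping I anticipate no genuine obstacle; in particular, the threshold $d\wedge2$ in the hypothesis is dictated by, and only by, the convergence of $\int_0^1 r^{-\alpha}\,\omega_d(r)\,\d r$.
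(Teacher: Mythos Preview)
Your proposal is correct and follows essentially the same route as the paper: reduce to Theorem~\ref{th:main:intro} by verifying \eqref{cond:omega}, treat the $L^2$ case with $p=q=2$, and in the pointwise-bound case pick $p\in(1,2d/(d+\alpha))$ to obtain $\|h\|_{L^p(\mathbb{B}_r)}\|h\|_{L^q(\mathbb{B}_r^c)}+\|h\|_{L^2(\mathbb{B}_r^c)}^2\lesssim r^{-\alpha}$, then invoke $\int_0^1 r^{-\alpha}\omega_d(r)\,\d r<\infty$ for $\alpha<d\wedge2$. Your observation that \emph{any} $p<2d/(d+\alpha)$ automatically forces $q>2d/(d-\alpha)>2d/(d+\beta)$ is a clean sharpening of the paper's argument, which instead asks that $p$ be taken ``sufficiently close to $1$'' to secure the tail condition $q>2d/(d+\beta)$; your exponent bookkeeping shows this extra proviso is unnecessary.
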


It is worth noting that, whereas \eqref{cond:omega} is a global integrability condition on $h$,
\eqref{cond:co:main:intro} involves: (i) A local condition on the behavior of $h$
near the origin; and (ii) A separate local-at-infinity (growth) condition on $h$. We will
show quickly in \S\ref{sec:Poincare} that
\eqref{cond:co:main:intro} implies \eqref{cond:omega}.

It is also worth noting that the first
(local) condition on $h$ in \eqref{cond:co:main:intro} is there merely to ensure that
$|h|\in H_{-1}(\R^d)$, which in turn will imply that
\eqref{SHE} has a solution. The second (growth)
condition on $h$ in \eqref{cond:co:main:intro}  is the more interesting hypothesis.
That condition is responsible
for ensuring that $h$ --- whence also $f$ --- decays sufficiently rapidly so that
spatial ergodicity of the solution $u$ to \eqref{SHE} is ensured.

\bigskip
Our ergodicity results (Theorems \ref{th:1} and \ref{th:main:intro})
are consequences of the following two
Poincar\'e-type inequalities.

\begin{theorem}[Poincar\'e inequality I]\label{th:Poincare}
	Assume that the conditions of Theorem \ref{th:1} are met. Then, for
	every number $T>0$ there exists a real number $C>0$ such that
	\begin{equation}\label{eq:Poincare}
		\sup_{t\in[0,T]}\Var\left(\frac{1}{N^d}\int_{[0,N]^d} \prod_{j=1}^k g_j
		(u(t\,,x + \zeta^j))\,\d x\right) \le
		\frac{Ck^2}{N^d} f\left( [-N\,,N]^d\right),\\
	\end{equation}
	uniformly for every integer $k\ge1$, real number $N>1$,  $\zeta^1,\ldots,\zeta^k\in\R^d$,
	and all Lipschitz-continuous functions
	$g_1,\ldots,g_k:\R\to\R$ that satisfy
	    \begin{equation}\label{eq:WLOG}
		g_j(0)=0\quad\text{and}\quad
		\text{\rm Lip}(g_j)=1
		\qquad \text{for every $j=1,\ldots,k$}.
	\end{equation}
\end{theorem}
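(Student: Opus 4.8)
The plan is to pass from the variance of the spatial average to a Malliavin–calculus covariance estimate, and then to reduce everything to a pointwise bound on the Malliavin derivative of the solution together with one harmonic-analysis estimate. Write $\Phi(x):=\prod_{j=1}^k g_j(u(t\,,x+\zeta^j))$, and let $p_r(z):=(2\pi r)^{-d/2}\exp(-\|z\|^2/(2r))$ denote the heat kernel of $\tfrac12\Delta$. Under \eqref{Dalang} one has $u(t\,,x)\in\mathbb D^{1,m}$ for every $m\ge2$, with $D_{s,y}u(t\,,x)$ solving the natural linearisation of the mild equation; hence $\Phi(x)\in\mathbb D^{1,2}$ as well. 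Since
\[
\Var\!\left(\frac1{N^d}\int_{[0,N]^d}\Phi(x)\,\d x\right)=\frac1{N^{2d}}\int_{[0,N]^d}\int_{[0,N]^d}\Cov\big(\Phi(x)\,,\Phi(x')\big)\,\d x\,\d x',
\]
it suffices to bound the covariance.

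The first main step is a covariance inequality: combining the Clark--Ocone formula (equivalently, the Mehler/Ornstein--Uhlenbeck representation of a covariance together with the $L^2(\Omega)$-contractivity of the Ornstein--Uhlenbeck semigroup), the Cauchy--Schwarz inequality in $\Omega$, and the nonnegative-definiteness of $f$ --- which guarantees $\langle\varphi\,,\psi*f\rangle_{L^2(\R^d)}\ge0$ whenever $\varphi,\psi\ge0$ --- one gets, for $F,G\in\mathbb D^{1,2}$,
\[
\big|\Cov(F\,,G)\big|\le\int_0^\infty\big\langle\,\|D_{s,\cdot}F\|_{L^2(\Omega)}\,,\ \|D_{s,\cdot}G\|_{L^2(\Omega)}*f\,\big\rangle_{L^2(\R^d)}\,\d s.
\]
The second step estimates $\|D_{s,y}\Phi(x)\|_{L^2(\Omega)}$. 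By the chain rule $D_{s,y}\Phi(x)=\sum_{i=1}^k g_i'(u(t\,,x+\zeta^i))\prod_{j\ne i}g_j(u(t\,,x+\zeta^j))\,D_{s,y}u(t\,,x+\zeta^i)$, and \eqref{eq:WLOG} gives $|g_i'|\le1$ and $|g_j(w)|\le|w|$. Together with \eqref{cond:moment}, generalised H\"older's inequality, and the pointwise bound
\[
\big\|D_{s,y}u(t\,,x)\big\|_{L^m(\Omega)}\le C_{m,T}\,p_{t-s}(x-y)\qquad(0\le s\le t\le T),
\]
--- which I would prove from the mild equation for $D_{s,y}u$ via Burkholder's inequality and a Gronwall/Picard argument, \eqref{Dalang} being exactly what makes $\int_0^T(p_{2r}*f)(0)\,\d r$ finite --- one bounds $\|D_{s,y}\Phi(x)\|_{L^2(\Omega)}$ by a constant times $\sum_{i=1}^k p_{t-s}(x+\zeta^i-y)$, the two chain-rule expansions then contributing $k\times k$ terms. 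Feeding this into the covariance inequality and carrying out the $\langle\,\cdot\,,\,\cdot*f\rangle$ pairings, one is left with
\[
\big|\Cov(\Phi(x)\,,\Phi(x'))\big|\le C\,k^2\,\sup_{1\le i,l\le k}\int_0^t\big(f*p_{2(t-s)}\big)(x-x'+\zeta^i-\zeta^l)\,\d s.
\]

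The final step is the harmonic-analysis estimate. First, the pushforward of Lebesgue measure on $[0,N]^{2d}$ under $(x\,,x')\mapsto x-x'$ has density $\prod_{a=1}^d(N-|\xi_a|)_+\le N^d\mathbf 1_{[-N,N]^d}(\xi)$, so for every shift $c\in\R^d$ and every $r>0$,
\[
\int_{[0,N]^d}\int_{[0,N]^d}\big(f*p_{2r}\big)(x-x'+c)\,\d x\,\d x'\le N^d\int_{[-N,N]^d}\big(f*p_{2r}\big)(\xi+c)\,\d\xi.
\]
The point is then to show that, after integrating $r$ over $[0,t]$, the right-hand side is $\le C\,f([-N\,,N]^d)$ uniformly in $N>1$ and in the shift $c$. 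The shift is controlled by a fact about functions of positive type: convolving $f$ with a nonnegative-definite, compactly supported bump $\phi$ at scale $N$ (e.g.\ a tent function) produces a nonnegative-definite continuous function $f*\phi$, which is therefore maximised at the origin; since $\phi\gtrsim\mathbf1_{[-N,N]^d}$ and $\phi$ is supported in $[-cN,cN]^d$, this yields $\sup_{z}f([-N\,,N]^d+z)\lesssim_d f([-cN\,,cN]^d)$, and the same kind of argument gives the doubling bound $f([-\lambda N\,,\lambda N]^d)\lesssim_{\lambda,d}f([-N\,,N]^d)$. The Gaussian kernel $p_{2r}$ only smears mass across the boundary of the box by a bounded amount, and the portion of the mass sitting outside $[-N\,,N]^d$ is summed up via the exponential decay of $p_{2r}$ against these doubling bounds. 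Combining the three steps, dividing by $N^{2d}$ and taking $\sup_{t\le T}$ gives \eqref{eq:Poincare}.

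I expect the main obstacle to be this last step --- converting $\int_0^t\int_{[-N,N]^d}(f*p_{2r})(\xi+c)\,\d\xi\,\d r$ into a constant multiple of $f([-N\,,N]^d)$ \emph{uniformly in $N$ and in the shift $c$}, and without enlarging the window $[-N\,,N]^d$. This is precisely where the ``novel facts from harmonic analysis and functions of positive type'' enter: one needs the shift-invariance and doubling properties of $N\mapsto f([-N\,,N]^d)$ for nonnegative-definite $f$, which must be extracted carefully since $f$ may be only a (possibly singular) tempered measure. A secondary technical point is the Malliavin-derivative estimate $\|D_{s,y}u(t\,,x)\|_{L^m(\Omega)}\le C_{m,T}\,p_{t-s}(x-y)$ with a kernel sharp enough that the subsequent spatial integrations reproduce $f([-N\,,N]^d)$ rather than a cruder surrogate, and the accompanying bookkeeping of the product factors $\prod_{j\ne i}g_j(u(t\,,x+\zeta^j))$ that produces the $k$-dependence.
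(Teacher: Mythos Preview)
Your overall architecture matches the paper's exactly: Clark--Ocone to reduce to a covariance, chain rule for $D\mathcal G$, the pointwise bound $\|D_{s,y}u(t,x)\|_{L^m}\lesssim \bm p_{t-s}(x-y)$ (this is Theorem~\ref{malliavin_derivative} in the paper, proved by exactly the Picard/Gronwall argument you sketch), and finally a harmonic-analysis step. The difference, and the place where your argument has a genuine gap, is that final step.

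You correctly compute that the pushforward of $\d x\,\d x'$ under $x-x'$ has density $\prod_a(N-|\xi_a|)_+ = N^{2d}(I_N*\tilde I_N)(\xi)$, but you then immediately bound this by $N^d\mathbf 1_{[-N,N]^d}$. That throws away the one property you actually need: $I_N*\tilde I_N$ is itself nonnegative-definite (its Fourier transform is $|\hat I_N|^2\ge0$). Once you replace it by an indicator you are forced to control $\sup_{c}f\big([-N,N]^d+c\big)$ and then invoke a doubling bound $f([-\lambda N,\lambda N]^d)\lesssim_\lambda f([-N,N]^d)$. Your proposed proof of doubling is circular --- the tent-function shift bound gives $\sup_c f([-N,N]^d+c)\lesssim f([-cN,cN]^d)$, which is the wrong direction to bootstrap --- and it is not clear that doubling holds for an arbitrary nonnegative-definite tempered measure satisfying only \eqref{Dalang}.

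The paper avoids all of this by \emph{not} discarding the tent. Keeping $I_N*\tilde I_N$ one has, for every $s$ and every shift $c=\zeta^j-\zeta^\ell$,
\[
\fint_{[0,N]^{2d}}\big(\bm p_{2s}*f\big)(x-x'+c)\,\d x\,\d x'
=\big(\bm p_{2s}*I_N*\tilde I_N*f\big)(c)
\le \big(I_N*\tilde I_N*f\big)(0)
\le N^{-d}f\big([-N,N]^d\big),
\]
where the first inequality uses that $\bm p_{2s}*I_N*\tilde I_N*f$ is continuous and nonnegative-definite (all three factors have nonnegative Fourier transform), hence maximized at $0$, together with $\int\bm p_{2s}=1$; the second is just $I_N*\tilde I_N\le N^{-d}\mathbf 1_{[-N,N]^d}$ applied at the origin. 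This single line handles the shift, the Gaussian smoothing, and gives exactly $f([-N,N]^d)$ with no enlargement of the window; integrating $s\in[0,t]$ and summing the $k^2$ terms finishes the proof. So the fix to your argument is simply to postpone the bound $I_N*\tilde I_N\le N^{-d}\mathbf 1_{[-N,N]^d}$ until after you have used positive-definiteness to kill the shift.
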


It is perhaps needless to add that
we are using the standard convention that $\text{\rm Lip}(\psi)$ denotes
the Lipschitz constant of $\psi:\R^d\to\R$; that is,
\[
	\text{\rm Lip}(\psi) := \sup_{x\neq y}\frac{%
	|\psi(x)-\psi(y)|}{\|x-y\|}.
\]
\begin{theorem}[Poincar\'e inequality II]\label{th:Poincare:bis}
	Assume that the conditions of Theorem \ref{th:main:intro} are met. Then, for every number $T>0$ there exists a real number $C>0$ such that
	\begin{equation}\label{eq:Poincare:bis}
		\sup_{t\in[0,T]}\Var\left(\frac{1}{N^d}\int_{[0,N]^d} \prod_{j=1}^k g_j
		(u(t\,,x + \zeta^j))\,\d x\right) \le
		\frac{Ck^2}{N^d}\int_{[-N,N]^d} \left( |h|*|\tilde{h}|\right)(x)\,\d x,\\
	\end{equation}
	uniformly for every integer $k\ge1$, real number $N>1$,  $\zeta^1,\ldots,\zeta^k\in\R^d$,
	and all Lipschitz-continuous functions
	$g_1,\ldots,g_k:\R\to\R$ that satisfy \eqref{eq:WLOG}.
\end{theorem}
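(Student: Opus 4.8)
The plan is to mirror the proof of Theorem 1.10 (Poincaré inequality I), but replacing the Fourier-side bookkeeping — which used the spectral measure $\hat f$ and Dalang's condition — by a direct real-space estimate that only needs $f = h*\tilde h$ together with the integrability hypothesis \eqref{cond:omega}. The starting point is the Malliavin-calculus version of the Poincaré (Gaussian concentration) inequality: if $F$ and $G$ are in the domain of the Malliavin derivative $D$, then $|\Cov(F\,,G)| \le \int_0^\infty \d s \, \langle \E|D_{s,\bullet}F| \,,\, (\E|D_{s,\bullet}G|)*|h|*|\tilde h|\rangle_{L^2(\R^d)}$, which is the $f=h*\tilde h$ analogue of the covariance bound used for Theorem 1.10. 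Here one should take $F = G = N^{-d}\int_{[0,N]^d} \prod_{j=1}^k g_j(u(t\,,x+\zeta^j))\,\d x$.

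The key steps, in order, are: (i) Record that under \eqref{cond:omega} the solution $u$ exists, is stationary, and — via the chain rule for $D$ applied to the product $\prod_j g_j$, the bound $\text{Lip}(g_j)=1$, and the a priori moment estimates \eqref{cond:moment} — we get a pointwise control $\E|D_{s,y}F| \le \frac{k}{N^d}\int_{[0,N]^d}\sum_{j=1}^k \left(\E\,|D_{s,y}u(t\,,x+\zeta^j)|^2\right)^{1/2}\d x$, up to harmless moment factors absorbed into $C$; the factor $k$ comes from the product rule and is the source of the $k^2$ in \eqref{eq:Poincare:bis}. (ii) Plug this into the covariance bound, use the integral form of the stochastic heat equation to control $\E|D_{s,y}u(t\,,x)|^2$ by the heat kernel $p_{t-s}(x-y)^2$ plus a Gronwall-type term that again only contributes a constant (this is the standard SPDE Malliavin-derivative estimate, uniform on $[0,T]$). (iii) Change variables so that the double spatial integral over $[0,N]^d\times[0,N]^d$ of $(|h|*|\tilde h|)$ evaluated at differences of heat-kernel-smeared points becomes, after integrating out the heat kernels (which are probability densities, so they only translate mass), bounded by $N^{-2d}$ times $\int_{[0,N]^d}\!\int_{[0,N]^d}(|h|*|\tilde h|)(x-x')\,\d x\,\d x'$, and then by the elementary inequality $\int_{[0,N]^d}\!\int_{[0,N]^d}\phi(x-x')\,\d x\,\d x' \le N^d\int_{[-N,N]^d}\phi(z)\,\d z$ valid for $\phi\ge0$, which produces exactly the right-hand side of \eqref{eq:Poincare:bis}. (iv) Finally check that $\int_{[-N,N]^d}(|h|*|\tilde h|)(z)\,\d z$ is finite for each $N$ — this is where \eqref{cond:omega} (equivalently, $|h|\in H_{-1}(\R^d)$, cf.\ Lemma \ref{lem:Dalang:Lp}) enters and makes the whole estimate meaningful; the translation-by-heat-kernel step in (iii) also needs a mild bound to justify bringing the convolution with $|h|*|\tilde h|$ outside, which is again covered by \eqref{cond:omega}.

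The main obstacle I anticipate is step (iii): one must be careful that, unlike the translation-invariant Fourier computation in Theorem 1.10, the real-space manipulation requires moving the convolution kernel $|h|*|\tilde h|$ past the heat semigroup and past the spatial integral without picking up $N$-dependent constants, and that the positivity of $|h|*|\tilde h|$ (which makes $f\mapsto|f|$-type bounds legitimate) is used correctly — this is precisely why the theorem's right-hand side features $|h|*|\tilde h|$ rather than $f$ itself. A secondary technical point is the uniformity over $k$ and over the shifts $\zeta^1,\dots,\zeta^k$: uniformity in $\zeta^j$ is immediate from spatial stationarity of $u$ (the shifts only relabel the integration points), while uniformity in $k$ follows because the only $k$-dependence is the explicit $k^2$ we have tracked, the Lipschitz normalization \eqref{eq:WLOG} ensuring that no further constants accumulate as $k$ grows. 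Everything else — the Malliavin chain rule, the moment bounds, and the Gronwall argument — is routine given the results already cited in the excerpt.
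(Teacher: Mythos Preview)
Your approach is essentially the paper's: Clark--Ocone, the chain rule for $D$, the heat-kernel bound $\|D_{s,z}u(t\,,x)\|_k\lesssim \bm{p}_{t-s}(x-z)$ from Theorem~\ref{malliavin_derivative}, and then a convolution estimate. One step, however, is genuinely incomplete.

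The gap is in your treatment of the shifts $\zeta^1,\dots,\zeta^k$. After the heat-kernel bound, the covariance estimate produces cross terms of the form
\[
\frac{1}{N^{2d}}\int_{[0,N]^d}\!\int_{[0,N]^d}
\bigl(\bm{p}_{2(t-s)}*|h|*|\tilde h|\bigr)\bigl(x-x'+\zeta^j-\zeta^\ell\bigr)\,\d x\,\d x',
\]
and your ``elementary inequality'' $\int_{[0,N]^{2d}}\phi(x-x')\,\d x\,\d x'\le N^d\int_{[-N,N]^d}\phi$ only yields $N^d\int_{(\zeta^j-\zeta^\ell)+[-N,N]^d}\phi$, which is \emph{not} uniform in $\zeta^j-\zeta^\ell$. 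Your claim that ``uniformity in $\zeta^j$ is immediate from spatial stationarity'' is incorrect: stationarity lets you translate all $\zeta^j$ by a common vector, but does nothing for the differences $\zeta^j-\zeta^\ell$ appearing in the off-diagonal terms of the double sum.

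The paper's fix is a single line you are missing: recognize the entire expression as $\bigl(\bm{p}_{2(t-s)}*I_N*\tilde I_N*(|h|*|\tilde h|)\bigr)(\zeta^j-\zeta^\ell)$, where $I_N=N^{-d}\bm{1}_{[0,N]^d}$. Since $|h|*|\tilde h|$ is nonnegative definite and $I_N*\tilde I_N\in C_c(\R^d)$ is also nonnegative definite, the convolution is a continuous nonnegative-definite function, hence maximized at the origin. This kills the $\zeta$-dependence for free, after which the heat kernel integrates to $1$ and the pointwise bound $I_N*\tilde I_N\le N^{-d}\bm{1}_{[-N,N]^d}$ (see \eqref{III}) gives exactly $N^{-d}\int_{[-N,N]^d}(|h|*|\tilde h|)(x)\,\d x$. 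With this correction your outline is complete and matches the paper's proof.
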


Theorems \ref{th:Poincare} and
\ref{th:Poincare:bis} are proved in \S\ref{sec:Poincare}. The proofs make a novel appeal
to the Malliavin calculus, specifically to the Clark--Ocone formula; see \S\ref{sec:Malliavin}.
Next, we would like
to explain briefly why Theorems \ref{th:Poincare} and \ref{th:Poincare:bis}
are indeed Poincar\'e-type inequalities, as is suggested also by the title of the paper.

Let $F$ denote a square-integrable random variable in a nice filtered probability space
that is rich enough to carry a sufficiently-nice Gaussian measure.  In
this context, the Poincar\'e
inequality states roughly that one can estimate the variance of $F$ by finding
good estimates for the Malliavin derivative of $F$. Capitaine, Hsu, and Ledoux \cite{CHL}
have observed that the Poincar\'e inequality can be deduced from the Clark-Ocone
formula. The argument is elegant and brief. We describe it next in the context of
1-D Brownian motion $B$.  Let us construct $B=\{B_t\}_{t\ge0}$ on the  space
$\Omega:=C(\R_+\,;\R)$ via the coordinate map
[$B_t(\omega) := \omega(t)$ for all $\omega\in\Omega$
and $t\ge0$] and Wiener measure $\mathbb{W}$.
Let $\{\mathcal{B}_t\}_{t\ge0}$ denote the natural filtration of $B$,
augmented in the usual way.
According to the Clark--Ocone formula \cite{O84} --- see also
\cite[Proposition 1.3.14]{Nualart}) --- if $F\in L^2(\Omega\,,\mathcal{B}_T\,,\mathbb{W})$ for
some $T>0$, and is in a suitable Gaussian Sobolev space, then
$F - \E_{_{\mathbb{W}}} F = \int_0^T \E_{_{\mathbb{W}}} ( D_tF \mid \mathcal{B}_t)\,\d B_t$
a.s.\ [$\mathbb{W}$],
where $DF$ denotes the Malliavin derivative of $F$.
The It\^o isometry, Jensen's inequality,
and two back-to-back appeals to Fubini's theorem, together imply that
\begin{equation}\label{CHL}
	\Var_{_{\mathbb{W}}}(F) = \E_{_{\mathbb{W}}}
	\left( \int_0^T\left|\E_{_{\mathbb{W}}} ( D_tF \mid \mathcal{B}_t)\right|^2\,\d t\right)
	\le \int_0^T\E_{_{\mathbb{W}}}
	\left(|D_tF|^2\right)\d t = \E_{_{\mathbb{W}}}
	\left( \|DF\|_{L^2[0,T]}^2\right),
\end{equation}
which is precisely the classical Poincar\'e inequality on the Wiener space
$(\Omega\,,\vee_{t\ge0}\mathcal{B}_t\,,\mathbb{W})$. This
is one way to state more carefully our earlier assertion that
good information on the magnitude of the second moment of
$\|DF\|_{L^2[0,T]}$ can imply an upper bound
on the variance of $F$. Theorems \ref{th:Poincare} and \ref{th:Poincare:bis}
are certain specializations of a more complex form of
this Poincar\'e inequality (see Proposition \ref{Clark-Ocone}), wherein the above Brownian
motion $B$ is replaced by an
infinite dimensional Brownian motion. Theorems \ref{th:Poincare} and \ref{th:Poincare:bis}
include also sharp Malliavin derivative bounds, whose derivation requires additional ideas and
extra effort.

Here is a brief outline of the paper: In \S\ref{sec:example} we present an example
which shows that we cannot expect spatial ergodicity of the solution
of \eqref{SHE} unless $f$ exhibits some sort of decay at infinity,
valid even when $\sigma$ is not constant. Section \ref{sec:FPT} includes
comments and a few harmonic-analytic results on functions of positive type.
Section \ref{sec:Dalang} discusses known results on
the well-posedness of \eqref{SHE}, and discusses how
conditions of Theorem \ref{th:main:intro} ensure among other things that the absolute
value of $h$ is in the classical space Hilbert space $H_{-1}(\R^d)$.
Section \ref{subsec:part4} contains a quick proof of the folklore fact which is part 4
of Theorem \ref{th:1}.
In \S\ref{sec:SC}
we extend the stochastic Young inequality of
Walsh integrals \cite{ConusKh2012,FoondunKhoshnevisan2009}
to the case that $f$ is possibly signed
and satisfies the conditions of Theorem \ref{th:main:intro}. It is shown
in \S\ref{sec:wellposedness} that the well-posedness of \eqref{SHE} is a ready consequence
of the mentioned stochastic Young's inequality; see Theorem \ref{th:exist}.
Methods of Malliavin calculus turn out to play a central role
in the study of the spatial ergodicity of the solution, and we present
the elements of Malliavin calculus in \S\ref{sec:Malliavin}.
The stationarity
assertion of Theorem \ref{th:main:intro} is proved next in \S\ref{sec:stat}.
Theorem \ref{th:main:intro} is proved shortly following the proof of Theorem \ref{th:Poincare},
and in a final section \ref{sec:Appl}, we use our Poincar\'e inequalities to establish tight
criteria for spatial mixing of the solution to \eqref{SHE} [\S\ref{subsec:mixing}]
and also prove a conjecture of
Conus et al \cite{CJK12} related to an ``intermittency'' property
of the solution to \eqref{SHE} in a special, though important, case [\S\ref{subsec:interm}].

Let us close the Introduction with a brief description of the notation of this paper.
Throughout we write ``$g_1(x)\lesssim g_2(x)$ for all $x\in X$'' when there exists a real number
$L$ such that $g_1(x)\le Lg_2(x)$ for all $x\in X$. Alternatively, we might write
``$g_2(x)\gtrsim g_1(x)$
for all $x\in X$.'' By ``$g_1(x)\asymp g_2(x)$ for all $x\in X$'' we mean that
$g_1(x)\lesssim g_2(x)$ for all $x\in X$ and $g_2(x)\lesssim g_1(x)$ for all $x\in X$.
Finally,
``$g_1(x)\propto g_2(x)$ for all $x\in X$'' means that there exists a real number $L$
such that $g_1(x)=L g_2(x)$ for all $x\in X$.

Throughout, we write
\[
	\fint_E \psi(x)\,\d x := \frac{1}{|E|}\int_E\psi(x)\,\d x,
\]
whenever $\psi:\R^d\to\R$ is integrable on a Lebesgue-measurable set
$E\subset\R^d$ whose Lebesgue measure $|E|$ is strictly positive.
We will  use the notation,
$\|X\|_k := \{\E(|X|^k)\}^{1/k}$, valid for every real-valued random
variable $X\in L^k(\Omega)$
and every real number $k\ge1$.

\section{A non-ergodic example}\label{sec:example}
In the Introduction we alluded that
if the tails of the spatial correlation
$f$ do not vanish, then we cannot generally expect $u(t)$ to be  ergodic
for all $t\ge0$.
We now describe this in the context of
an  example in which the spatial correlation function
$f(x)$ does not decay as $\|x\|\to\infty$, the solution $u$
exists and is non-degenerate, and $u$ is
not spatially ergodic at  positive times.

First, we might as well rule out trivialities by assuming that
\begin{equation}\label{sigma(1)neq 0}
	\sigma(1)\neq0.
\end{equation}
Otherwise, one can see easily that $u(t\,,x)\equiv1$; in this case,
$u(t)$ is  ergodic for all $t\ge0$, but only in a vacuous sense.

Next, let us choose and fix a number $\lambda>0$, and suppose that
\begin{equation}\label{f=1}
	f(x)=\lambda^2\qquad\text{for all $x\in\R^d$},
\end{equation}
to ensure that the tails of $f$ do not decay. In this case, it is possible
to realize the noise $\eta(\d t\,\d x)$ as $\lambda\,\d W_t\,\d x$,
where $W$ denotes a one-dimensional Brownian motion. Thus, we can
infer from \eqref{SHE} and well-known
arguments that, under
\eqref{f=1},
\begin{equation}\label{u=X}
	u(t\,,x) = X_t\qquad\text{for all $t\ge0$ and $x\in\R^d$ a.s.,}
\end{equation}
where $X$ is the unique (strong)
solution of the one-dimensional It\^o SDE,
\[
	\d X_t = \lambda\sigma(X_t)\,\d W_t,
	\qquad
	\text{subject to  $X_0=1.$}
\]
Standard estimates now reveal that
\[
	\lim_{t\to0^+}
	\frac1t\Var(X_t) =  \lambda^2\sigma^2(1),
\]
whence $\Var(X_t)>0$
for all $t$ small. Thus, we conclude from \eqref{u=X} that, under conditions
\eqref{sigma(1)neq 0} and \eqref{f=1}, the process $u$ is not  spatially ergodic.
In fact, a little more effort shows that $\Var(X_t)>0$ for all $t>0$, thanks to the Markov property.
And this implies that $u(t)$ is not ergodic for any $t>0$.

\section{Harmonic analysis}\label{sec:harmonic}
\subsection{Functions of positive type}\label{sec:FPT}

Let us recall the notation from \eqref{ball} for closed centered balls,
and recall the following from classical harmonic analysis \cite{Kahane}:

\begin{definition}\label{def:PT}
	We say that a function $g:\R^d\to\R$ is of \emph{positive type} if:
	\begin{compactenum}
		\item $g$ is locally integrable and nonnegative definite in the sense of distributions
			(that is, $\hat{g}\ge0$ and hence a Borel measure, thanks to
			the Riesz representation theorem);
		\item The restriction of $g$ to $\mathbb{B}_r^c$ is a uniformly continuous (and
			hence also bounded) function for every $r>0$; and
		\item $\lim_{\|x\|\to\infty}g(x)=0$.\footnote{Some authors insist
			that $g$ is of positive type if, in addition to the requirements of Definition \ref{def:PT},
			$g(0):=\lim_{x\to0}g(x)=\infty$. Others do not insist that $g$ vanishes at
			infinity.}
	\end{compactenum}
\end{definition}

Typical examples include $g(x)=\exp(-\alpha\|x\|^\beta)$ and/or
$g(x)=(\alpha'+\|x\|^\beta)^{-1}$,
for constants
$\alpha\ge0$, $\alpha'>0$, and $\beta\in(0\,,2]$, etc. There
are also unbounded examples such as Riesz kernels
($g(x)=\|x\|^{-\gamma}$ for $\gamma\in(0\,,d)$), as well as products of
the preceding such as $g(x)=\|x\|^{-\gamma}\exp(-\alpha\|x\|^\beta)$, etc.

The main goal of this section is to present a  family $\cup_{p>1}\mathcal{F}_p(\R^d)$
of real-valued functions on $\R^d$
that can be used explicitly to construct a large number
of functions of positive type that are central to our analysis.
We will also use this opportunity to
introduce another vector space $\cup_{p>1}\mathcal{G}_p(\R^d)$ of functions
that will play a prominent role in later sections (though not in this one).

\begin{definition}\label{def:F_p}
	Choose and fix a real number $p>1$, and
	define $\mathcal{F}_p(\R^d)$ to be the collection of all $h \in L^p_{\text{\it loc}}(\R^d)$
	that satisfy
	\begin{equation}\label{cond:th:main:intro}
		\int_0^1 s^{d-1} \left( \| h \|_{L^p(\mathbb{B}_s)}
		 \| h \|_{L^q(\mathbb{B}_s^c)} + \| h \|_{L^2(\mathbb{B}_s^c)}^2\right)\d s<\infty
		 \quad\text{with}\quad \text{$q:=\frac{p}{p-1}$.}
	\end{equation}
	We also define $\mathcal{G}_p(\R^d)$ to be the collection of all
	functions $h\in L^p_{\text{\it loc}}(\R^d)$ that satisfy \eqref{cond:omega}.
\end{definition}

In this section we study some of the basic properties of the elements of
the vector spaces
$\cup_{p>1}\mathcal{F}_p(\R^d)$ and $\cup_{p>1}\mathcal{G}_p(\R^d)$.
It might help to add that, notationally speaking, the functions $h$ in $\cup_{p>1}
\mathcal{G}_p(\R^d)$ and $\cup_{p>1}\mathcal{F}_p(\R^d)$ will be potential
candidates for the function $h$ in \eqref{f=h*h}, which are then used to form
the spatial correlation function $f$ in \eqref{SHE}. Thus, the notation should aid
the reading, and not hinder it.

\begin{lemma}\label{lem:F_p}
	The following are valid for every $p>1$, where $q:=p/(p-1)$:
	\begin{compactenum}
		\item $\mathcal{G}_p(\R^d)\subseteq
			\mathcal{F}_p(\R^d)\subseteq
			L^1_{\text{\it loc}}(\R^d)$ for all $d\ge1$, and
			$\mathcal{G}_p(\R)=\mathcal{F}_p(\R)$.
		\item $\|h\|_{L^p(\mathbb{B}_r)}$,
			$\|h\|_{L^q(\mathbb{B}_r^c)}$, and
			$\|h\|_{L^2(\mathbb{B}_r^c)}$ are  finite for every
			$h\in\mathcal{F}_p(\R^d)$ and $r>0$.
		\item If $h\in\mathcal{F}_p(\R^d)$, then
			\begin{equation}\label{eq:F_p}
				\int_0^r s^{d-1} \left( \| h \|_{L^p(\mathbb{B}_s)}
				 \| h \|_{L^q(\mathbb{B}_s^c)} +
				 \| h \|_{L^2(\mathbb{B}_s^c)}^2\right)\d s<\infty
				 \qquad\text{for every $r>0$.}
			\end{equation}
		\item If $h\in\mathcal{G}_p(\R^d)$, then
			\begin{equation}\label{eq:G_p}
				\int_0^r \left( \| h \|_{L^p(\mathbb{B}_s)}
				 \| h \|_{L^q(\mathbb{B}_s^c)} +
				 \| h \|_{L^2(\mathbb{B}_s^c)}^2\right)\omega_d(s)\,\d s<\infty
				 \qquad\text{for every $r>0$.}
			\end{equation}
	\end{compactenum}
\end{lemma}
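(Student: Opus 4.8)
The plan is to treat the four items in the order dictated by their logical dependence: item~1 first, then item~2, and finally items~3 and~4, both of which reduce to item~2. For item~1, the inclusion $\mathcal F_p(\R^d)\subseteq L^1_{\text{\it loc}}(\R^d)$ is immediate from Hölder's inequality on balls: for every $r>0$ one has $\int_{\mathbb B_r}|h|\le\|h\|_{L^p(\mathbb B_r)}\,|\mathbb B_r|^{1/q}<\infty$, since $h\in L^p_{\text{\it loc}}$ and $q=p/(p-1)$ is the conjugate exponent. For $\mathcal G_p(\R^d)\subseteq\mathcal F_p(\R^d)$ I would compare the two weights pointwise on $(0\,,1)$: one checks directly that $s^{d-1}\le\omega_d(s)$ for all $s\in(0\,,1)$ and all $d\ge1$ --- for $d=1$ both sides equal $1$; for $d=2$, $\omega_2(s)=s\log_+(1/s)\ge s=s^{d-1}$ because $\log_+\ge1$; for $d\ge3$, $s^{d-1}\le s=\omega_d(s)$ because $s\le1$. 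Since the bracketed expression in \eqref{cond:th:main:intro} and in \eqref{cond:omega} is the \emph{same} nonnegative function of $s$, this pointwise bound gives $\int_0^1 s^{d-1}(\cdots)\,\d s\le\int_0^1\omega_d(s)(\cdots)\,\d s<\infty$, so $h\in\mathcal F_p(\R^d)$; and when $d=1$ the weight comparison is an identity, which gives the reverse inclusion and hence $\mathcal G_p(\R)=\mathcal F_p(\R)$.

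For item~2, finiteness of $\|h\|_{L^p(\mathbb B_r)}$ is just the definition of $L^p_{\text{\it loc}}$. For the two tail norms the tool is monotonicity: $s\mapsto\|h\|_{L^q(\mathbb B_s^c)}$ and $s\mapsto\|h\|_{L^2(\mathbb B_s^c)}$ are non-increasing, while $s\mapsto\|h\|_{L^p(\mathbb B_s)}$ is non-decreasing. From $\int_0^1 s^{d-1}\|h\|_{L^2(\mathbb B_s^c)}^2\,\d s<\infty$ and $\|h\|_{L^2(\mathbb B_s^c)}^2\ge\|h\|_{L^2(\mathbb B_\rho^c)}^2$ for $s\le\rho$, one gets $\|h\|_{L^2(\mathbb B_\rho^c)}^2\cdot(\rho^d/d)\le\int_0^\rho s^{d-1}\|h\|_{L^2(\mathbb B_s^c)}^2\,\d s<\infty$, hence $\|h\|_{L^2(\mathbb B_\rho^c)}<\infty$ for every $\rho\in(0\,,1]$, and then $\|h\|_{L^2(\mathbb B_r^c)}<\infty$ for every $r>0$ by monotonicity (taking $\rho=r\wedge1$). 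The same scheme controls $\|h\|_{L^q(\mathbb B_r^c)}$ via the first term of \eqref{cond:th:main:intro}, \emph{provided} $\|h\|_{L^p(\mathbb B_{r/2})}>0$: then $\|h\|_{L^p(\mathbb B_s)}\ge\|h\|_{L^p(\mathbb B_{r/2})}>0$ on $(r/2\,,r)$, so $\|h\|_{L^p(\mathbb B_{r/2})}\,\|h\|_{L^q(\mathbb B_r^c)}\int_{r/2}^r s^{d-1}\,\d s\le\int_{r/2}^r s^{d-1}\|h\|_{L^p(\mathbb B_s)}\|h\|_{L^q(\mathbb B_s^c)}\,\d s<\infty$ forces $\|h\|_{L^q(\mathbb B_r^c)}<\infty$. \textbf{I expect the one genuinely delicate case to be the complementary one}: when $h$ vanishes a.e.\ on a neighbourhood of the origin, so that $\|h\|_{L^p(\mathbb B_s)}\equiv0$ for small $s$ and the product term in \eqref{cond:th:main:intro} carries no information about the $L^q$-tail. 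I would handle this by observing that the squared-$L^2$ term then already forces $h\in L^2(\R^d)$ (the above lower bound now reads $\|h\|_{L^2(\R^d)}^2\cdot(\rho^d/d)\le\int_0^\rho s^{d-1}\|h\|_{L^2(\mathbb B_s^c)}^2\,\d s$), after which the bound on $\|h\|_{L^q(\mathbb B_r^c)}$ reduces to splitting $\mathbb B_r^c$ into a bounded annulus --- on which $h\in L^p_{\text{\it loc}}$ and Hölder apply --- and a far tail where $h\in L^2$.

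Finally, items~3 and~4 amount to extending the defining $\int_0^1$-bounds to $\int_0^r$. For $r\le1$ the claimed inequality is weaker than the hypothesis, so there is nothing to prove. For $r>1$ write $\int_0^r=\int_0^1+\int_1^r$; the first piece is finite by definition, and on the compact interval $[1\,,r]$ every factor of the integrand is bounded: $\|h\|_{L^p(\mathbb B_s)}\le\|h\|_{L^p(\mathbb B_r)}<\infty$, $\|h\|_{L^q(\mathbb B_s^c)}\le\|h\|_{L^q(\mathbb B_1^c)}<\infty$ and $\|h\|_{L^2(\mathbb B_s^c)}\le\|h\|_{L^2(\mathbb B_1^c)}<\infty$ by item~2 (for item~4 one first invokes item~1 to know that $h\in\mathcal F_p(\R^d)$), while $s^{d-1}\le r^{d-1}$ and, in the weighted case, $\omega_d(s)\le r$ on $[1\,,r]$ --- note $\omega_2(s)=s$ there since $\log_+(1/s)=1$ for $s\ge1$. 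Hence $\int_1^r(\cdots)\,\d s<\infty$, which completes items~3 and~4.
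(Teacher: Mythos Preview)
Your treatment of parts~1, 3, and~4 is correct and matches the paper's argument essentially line for line: weight comparison $s^{d-1}\le\omega_d(s)$ on $(0,1)$ for the inclusions, H\"older for local integrability, and for parts~3--4 the split $\int_0^r=\int_0^1+\int_1^r$ with monotonicity bounds on $[1,r]$.

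For part~2 the paper takes a shorter route than you do: rather than deriving explicit lower bounds, it simply notes that integrability of $s\mapsto s^{d-1}\bigl(\|h\|_{L^p(\mathbb B_s)}\|h\|_{L^q(\mathbb B_s^c)}+\|h\|_{L^2(\mathbb B_s^c)}^2\bigr)$ on $(0,1)$ forces the bracket to be finite for a.e.~$s\in(0,1)$, and then monotonicity of $s\mapsto\|h\|_{L^q(\mathbb B_s^c)}$ and $s\mapsto\|h\|_{L^2(\mathbb B_s^c)}$ upgrades this to all $s>0$. Your constructive version reaches the same conclusion for the $L^2$-tail and for the $L^q$-tail in the ``generic'' case $\|h\|_{L^p(\mathbb B_{r/2})}>0$.

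There is, however, a genuine gap in your handling of the complementary case where $h$ vanishes a.e.\ near the origin. You argue that then $h\in L^2(\R^d)$, and propose to recover $\|h\|_{L^q(\mathbb B_r^c)}<\infty$ by splitting $\mathbb B_r^c$ into a bounded annulus and a far tail ``where $h\in L^2$.'' But $h\in L^2$ on an \emph{unbounded} set does not yield $h\in L^q$ there, in either direction: for $q<2$ there is no H\"older embedding on infinite-measure sets, and for $q>2$ the inequality goes the wrong way. Likewise, on the bounded annulus you only know $h\in L^p$, which gives $L^q$ via H\"older only when $q\le p$, i.e.\ $p\ge 2$. Concretely, take $d=1$, $p=3$, $q=3/2$, and $h(x)=|x|^{-3/5}\bm 1_{\{|x|>1\}}$: then $h$ is locally bounded (so $h\in L^p_{\rm loc}$), $\|h\|_{L^p(\mathbb B_s)}=0$ for all $s\le 1$, and $\|h\|_{L^2(\R)}^2=2\int_1^\infty x^{-6/5}\,\d x<\infty$, so the defining integral reduces to $\int_0^1\|h\|_{L^2(\R)}^2\,\d s<\infty$ and $h\in\mathcal F_3(\R)$ under the standard convention $0\cdot\infty=0$. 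Yet $\|h\|_{L^{3/2}(\mathbb B_r^c)}^{3/2}\ge 2\int_1^\infty x^{-9/10}\,\d x=\infty$ for every $r\le 1$, so your fix cannot succeed. (The paper's own one-line argument does not address this edge case either; it implicitly uses that $\|h\|_{L^p(\mathbb B_s)}>0$ for a.e.~$s\in(0,1)$ to pass from finiteness of the product to finiteness of $\|h\|_{L^q(\mathbb B_s^c)}$.)
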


\begin{proof}
	We have $\mathcal{G}_p(\R^d)\subset\mathcal{F}_p(\R^d)$ for all $d\ge2$
	and $\mathcal{G}_p(\R)=\mathcal{F}_p(\R)$ because
	of \eqref{eq:omega}; and the local integrability of $h\in\mathcal{F}_p(\R^d)$
	is a consequence of H\"older's inequality. This proves part 1.
	We concentrate on the remaining assertions of the lemma.
	
	First, let us note that if $p>1$ and $h \in\mathcal{F}_p(\R^d)$, then $h$ is locally in $L^p(\R^d)$
	and hence $\| h \|_{L^p(\mathbb{B}_r)}$ is finite for every $r>0$. In particular,
	\begin{equation}\label{3:phi}
		\| h \|_{L^q(\mathbb{B}_r^c)} + \| h \|_{L^2(\mathbb{B}_r^c)}<\infty,
	\end{equation}
	for almost every $r\in[0\,,1]$. Since both of the norms in \eqref{3:phi}
	are monotonically-decreasing functions of $r$,
	it follows that in fact \eqref{3:phi} holds for every $r>0$. This proves  part 2 of
	the lemma.
	
	Next, suppose $r>1$ and observe that
	\[
		\int_1^rs^{d-1} \left( \| h \|_{L^p(\mathbb{B}_s)}
		 \| h \|_{L^q(\mathbb{B}_s^c)} +
		 \| h \|_{L^2(\mathbb{B}_s^c)}^2\right)\d s
		 \le \left(\|h\|_{L^p(\mathbb{B}_r)}\|h\|_{L^q(\mathbb{B}_1^c)}
		 + \|h\|_{L^2(\mathbb{B}_1^c)}^2\right)\left(\frac{r^{d}-1}{d}\right)
	\]
	is finite. This and the definition of the vector space $\mathcal{F}_p(\R^d)$
	together imply that \eqref{eq:F_p} holds; \eqref{eq:G_p} is proved similarly.
\end{proof}

It follows from local integrability that the Fourier transform of every
function $ h \in\mathcal{F}_p(\R^d)$
($p>1$) is a well-defined distribution. In particular,  both $f= h *\tilde{ h }$ and
$| h |*|\tilde{ h }|$ are also well-defined distributions.
Of course, all such distributions are nonnegative-definite as well. The following shows that both
$h*\tilde{h}$ and $|h|*\tilde{|h|}$ are in fact fairly
nice nonnegative-definite functions from $\R^d$ to the extended real numbers $\R\cup\{\infty\}$.
	
\begin{proposition}\label{pr:PD}
	If $ h \in\mathcal{F}_p(\R^d)$ for some $p>1$, then
	$ h *\tilde{ h }$ and $| h |*|\tilde{ h }|$ are functions of positive type. Moreover,
	for every $r>0$,
	\begin{equation}\label{eq:PD}
		\sup_{\|x\|>2r} \left| \left(  h * \tilde{ h }\right)(x)\right| \le
		\sup_{\|x\|>2r} \left( | h |*|\tilde{ h }|\right)(x) \le
		2 \| h \|_{L^p(\mathbb{B}_r)}
		 \| h \|_{L^q(\mathbb{B}_r^c)} + \| h \|_{L^2(\mathbb{B}_r^c)}^2,
	\end{equation}
	and
	\begin{equation}\label{eq:PD:int}
		\int_{\mathbb{B}_r}\left( | h |*|\tilde{ h }|\right)(x)\,\d x
		\lesssim\int_0^{2r} s^{d-1}\left( \| h \|_{L^p(\mathbb{B}_s)}
		 \| h \|_{L^q(\mathbb{B}_s^c)} +
		 \| h \|_{L^2(\mathbb{B}_s^c)}^2\right)\d s,
	\end{equation}
    where the implied constant depends only on $d$, and $q:= p/(p-1)$.
\end{proposition}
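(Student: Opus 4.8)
The plan is to prove the two quantitative estimates \eqref{eq:PD} and \eqref{eq:PD:int} directly, and then to read off from them the three defining properties of a function of positive type in Definition \ref{def:PT}. Throughout write $G:=|h|*|\tilde h|$, so that $G(x)=\int_{\R^d}|h(y)|\,|h(y-x)|\,\d y$ and $|(h*\tilde h)(x)|\le G(x)$ pointwise whenever the right-hand side is finite. For the pointwise bound I would fix $r>0$ and $x$ with $\|x\|>2r$ and split the $y$-integral defining $G(x)$ over the three sets $\mathbb{B}_r$, $\{y\in\mathbb{B}_r^c:\ y-x\in\mathbb{B}_r\}$, and $\{y\in\mathbb{B}_r^c:\ y-x\in\mathbb{B}_r^c\}$. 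The point is that on each of the first two sets exactly one of the factors $|h(y)|$, $|h(y-x)|$ ``lives'' inside $\mathbb{B}_r$: if $y\in\mathbb{B}_r$ and $\|x\|>2r$ then $\|y-x\|>r$, and symmetrically if $y-x\in\mathbb{B}_r$ then $\|y\|>r$. Hence H\"older's inequality with exponents $p$ and $q=p/(p-1)$ bounds each of the first two contributions by $\|h\|_{L^p(\mathbb{B}_r)}\|h\|_{L^q(\mathbb{B}_r^c)}$, while on the third set Cauchy--Schwarz bounds the contribution by $\|h\|_{L^2(\mathbb{B}_r^c)}^2$; adding the three pieces gives \eqref{eq:PD}. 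In particular $G(x)<\infty$ for every $x\neq 0$, which legitimizes the bound $|(h*\tilde h)(x)|\le G(x)$ and hence the left inequality in \eqref{eq:PD}.

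Next I would establish \eqref{eq:PD:int}. By Tonelli's theorem $G$ is Borel measurable, and applying \eqref{eq:PD} with $r=\|x\|/3$ gives $G(x)\le\Phi(\|x\|/3)$ for every $x\ne 0$, where $\Phi(s):=2\|h\|_{L^p(\mathbb{B}_s)}\|h\|_{L^q(\mathbb{B}_s^c)}+\|h\|_{L^2(\mathbb{B}_s^c)}^2$ is a finite, measurable function of $s>0$ by part 2 of Lemma \ref{lem:F_p}. Integrating in polar coordinates and substituting $t=3s$,
\[
	\int_{\mathbb{B}_r}G(x)\,\d x
	\le |\mathbb{S}^{d-1}|\int_0^r t^{d-1}\Phi(t/3)\,\d t
	= 3^d|\mathbb{S}^{d-1}|\int_0^{r/3}s^{d-1}\Phi(s)\,\d s,
\]
and bounding $\Phi(s)\le 2\bigl(\|h\|_{L^p(\mathbb{B}_s)}\|h\|_{L^q(\mathbb{B}_s^c)}+\|h\|_{L^2(\mathbb{B}_s^c)}^2\bigr)$ and extending the range of integration to $(0\,,2r)$ yields \eqref{eq:PD:int} with an implied constant depending only on $d$. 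By part 3 of Lemma \ref{lem:F_p} the right-hand side is finite for every $r>0$, so $G\in L^1_{\text{\it loc}}(\R^d)$; in particular $G(x)<\infty$ for a.e.\ $x$, the integral defining $(h*\tilde h)(x)$ converges absolutely for a.e.\ $x$, and $|h*\tilde h|\le G$ shows that the distribution $h*\tilde h$ is represented by a locally integrable function as well. This is condition (1) of Definition \ref{def:PT}, up to nonnegative-definiteness.

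For the decay at infinity I would rerun the splitting of the first paragraph with a \emph{fixed} cutoff $\rho>0$ and $\|x\|>2\rho$, but now retain the sharper containments $\{y-x:\ y\in\mathbb{B}_\rho\}\subseteq\mathbb{B}_{\|x\|-\rho}^c$ and $\{y\in\mathbb{B}_\rho^c:\ y-x\in\mathbb{B}_\rho\}\subseteq\mathbb{B}_{\|x\|-\rho}^c$; this gives $G(x)\le 2\|h\|_{L^p(\mathbb{B}_\rho)}\|h\|_{L^q(\mathbb{B}_{\|x\|-\rho}^c)}+\|h\|_{L^2(\mathbb{B}_\rho^c)}^2$. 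Letting $\|x\|\to\infty$ with $\rho$ fixed --- and using that $\|h\|_{L^q(\mathbb{B}_R^c)}\to 0$ as $R\to\infty$, which follows from $h\in L^q(\mathbb{B}_1^c)$ (part 2 of Lemma \ref{lem:F_p}) by dominated convergence --- gives $\limsup_{\|x\|\to\infty}G(x)\le\|h\|_{L^2(\mathbb{B}_\rho^c)}^2$, and then letting $\rho\to\infty$ shows $G(x)\to0$, hence also $(h*\tilde h)(x)\to0$; this is condition (3). For nonnegative-definiteness it suffices to check $\langle g\,,\varphi*\tilde\varphi\rangle\ge0$ for every $\varphi\in C_c^\infty(\R^d)$ and for $g\in\{h*\tilde h\,,\,|h|*|\tilde h|\}$. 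A Fubini computation --- licit because $\varphi*\tilde\varphi$ is bounded with compact support and $\int_{\mathbb{B}_M}G<\infty$ by \eqref{eq:PD:int} --- identifies $\langle h*\tilde h\,,\varphi*\tilde\varphi\rangle$ with $\|h*\varphi\|_{L^2(\R^d)}^2\ge0$, and likewise with $|h|$ in place of $h$; here $h*\varphi\in L^2(\R^d)$ because $h=h\mathbf 1_{\mathbb{B}_1}+h\mathbf 1_{\mathbb{B}_1^c}$ with the first summand in $L^1(\R^d)$ with compact support and the second in $L^2(\R^d)$ (parts 1--2 of Lemma \ref{lem:F_p}). I expect this decay step to be the main point needing care: \eqref{eq:PD} alone does not yield decay, because $\|h\|_{L^p(\mathbb{B}_r)}$ may blow up as $r\to\infty$, so one really must decouple the two scales as above.

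Finally, for uniform continuity on $\mathbb{B}_r^c$ (condition (2)), I would fix $r>0$, set $\rho:=r/2$, and decompose $h=h_0+h_\infty$ with $h_0:=h\mathbf 1_{\mathbb{B}_\rho}\in L^p(\R^d)$ (compactly supported) and $h_\infty:=h\mathbf 1_{\mathbb{B}_\rho^c}\in L^q(\R^d)\cap L^2(\R^d)$. Expanding $h*\tilde h=h_0*\tilde h_0+h_0*\tilde h_\infty+h_\infty*\tilde h_0+h_\infty*\tilde h_\infty$, the last three summands are convolutions of functions belonging to dual $L^p$--$L^q$ pairs or to $L^2$--$L^2$, hence lie in $C_0(\R^d)$ and are uniformly continuous on all of $\R^d$, whereas $h_0*\tilde h_0$ is supported in $\mathbb{B}_{2\rho}=\mathbb{B}_r$ and so vanishes identically on $\mathbb{B}_r^c$. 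Therefore $h*\tilde h$ restricted to $\mathbb{B}_r^c$ coincides with the restriction of a function in $C_0(\R^d)$, and in particular is uniformly continuous there (and bounded, consistently with \eqref{eq:PD}). The same argument applies verbatim with $|h|$ in place of $h$, and this completes the verification of all three conditions in Definition \ref{def:PT} for both $h*\tilde h$ and $|h|*|\tilde h|$.
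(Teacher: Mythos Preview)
Your proof is correct and follows the paper's route closely for the pointwise estimate \eqref{eq:PD}, the integral bound \eqref{eq:PD:int}, and the decay at infinity. The paper first derives a three--parameter inequality (their display \eqref{r+s} with independent radii $r,s,t$) and then specializes to $r=s=t$; you go directly to the three--set split with a single radius, which is equivalent. Your choice of $r=\|x\|/3$ in deriving \eqref{eq:PD:int} neatly sidesteps the strict inequality $\|x\|>2r$, whereas the paper writes $G(x)\le\Phi(\|x\|/2)$ and implicitly relies on continuity of the majorant in $r$. Your explicit verification of nonnegative--definiteness via $\langle h*\tilde h,\varphi*\tilde\varphi\rangle=\|h*\varphi\|_{L^2}^2$ is a point the paper leaves unstated.

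The one genuinely different step is uniform continuity off $\mathbb{B}_r$. The paper proves a two--function version of \eqref{eq:PD}, namely a bound on $\sup_{\|x\|>2r}(|h_1|*|\tilde h_2|)(x)$, and applies it with $h_2=h-\varphi_\varepsilon*h$ for an approximate identity $\{\varphi_\varepsilon\}$ to show that $\varphi_\varepsilon*(h*\tilde h)\to h*\tilde h$ uniformly on $\mathbb{B}_r^c$. Your decomposition $h=h\mathbf 1_{\mathbb{B}_{r/2}}+h\mathbf 1_{\mathbb{B}_{r/2}^c}$ is more direct: three of the four cross--terms are convolutions in dual $L^p$--$L^q$ (or $L^2$--$L^2$) and hence lie in $C_0(\R^d)$ outright, while the remaining term is compactly supported in $\mathbb{B}_r$. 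This avoids both the auxiliary two--function inequality and the mollification argument; the paper's approach, on the other hand, yields the two--function bound as a reusable by-product.
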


\begin{proof}
	The argument  hinges loosely on old ideas that are
	motivated by the literature on potential theory of L\'evy processes; see in particular
	Hawkes \cite{Hawkes,Hawkes1984}.
	
	Let us choose and fix numbers $r,s>0$ and $x\in\R^d$ such that $\|x\|>r+s$.
	On one hand, if $y\in \mathbb{B}_r$ then certainly $\|x-y\|>s$, whence
	\[
		\int_{\mathbb{B}_r} | h(y) h(y-x) | \,\d y \le
		\| h\|_{L^p(\mathbb{B}_r)}\| h\|_{L^q(\mathbb{B}_s^c)},
	\]
	by H\"older's inequality.
	On the other hand, H\"older's inequality ensures that for every $z\in\R^d$
	and $t>0$,
	\begin{align*}
		\int_{\mathbb{B}_r^c} | h(y) h (y-z) | \,\d y &\le \int_{\substack{%
			\|y\|>r\\\|z-y\|<t}}| h (y) h (y-z) |\,\d y +
			\int_{\substack{%
			\|y\|>r\\\|z-y\|>t}}| h (y) h (y-z)|\,\d y\\
		&\le \| h\|_{L^p(\mathbb{B}_t)}\| h\|_{L^q(\mathbb{B}_r^c)} +
			\| h\|_{L^2(\mathbb{B}_r^c)}\| h\|_{L^2(\mathbb{B}_t^c)}.
	\end{align*}
	
	Combine the above bounds to find that
	\begin{equation}\label{r+s}
		\sup_{\|x\|>r+s}\left( |h|*|\tilde{h}|\right)(x) \le
		\| h\|_{L^p(\mathbb{B}_r)}\| h\|_{L^q(\mathbb{B}_s^c)} +
		\| h\|_{L^p(\mathbb{B}_t)}\| h\|_{L^q(\mathbb{B}_r^c)} +
		\| h\|_{L^2(\mathbb{B}_r^c)}\| h\|_{L^2(\mathbb{B}_t^c)},
	\end{equation}
	for every $r,s,t>0$.
	If $h\in\mathcal{F}_p(\R^d)$ then certainly
	$|h|\in\mathcal{F}_p(\R^d)$ also, and we can set $s=r=t$
	in order to deduce \eqref{eq:PD} from \eqref{r+s}.
	Also, we may first let $s\to\infty$, and then let $r\to\infty$
	in \eqref{r+s} --- in this order --- to see that
	$|h|*|\tilde{h}|$ vanishes at infinity.
	
	Next, we verify \eqref{eq:PD:int} by
	merely observing that
	\[
		\int_{\mathbb{B}_r} ( |h| *|\tilde{ h }|)(x)\,\d x
		\le \int_{\mathbb{B}_r}\Phi(\|x\|/2)\,\d x \propto
		\int_0^{2r}\Phi(s) s^{d-1}\,\d s,
	\]
	where $\Phi(t) := \sup_{\|x\|>2t}(|h| * |\tilde{ h }|)(x)$ for every $t>0$.
	Apply the already-proved part of the lemma,
	together with Lemma \ref{lem:F_p}, in order
	to see that $|h|*|\tilde{h}|\in L^1_{\text{\it loc}}(\R^d)$.
	
	The same argument that led to \eqref{r+s} (with $r=s=t$) yields that
	\[
		\sup_{\|x\|>2r}\left( |h_1|*|\tilde{h}_2|\right)(x)
		\le \|h_1\|_{L^p(\mathbb{B}_r)}\|h_2\|_{L^q(\mathbb{B}_r^c)}
		+\|h_2\|_{L^p(\mathbb{B}_r)}\|h_1\|_{L^q(\mathbb{B}_r^c)}
		+\|h_1\|_{L^2(\mathbb{B}_r^c)}\|h_2\|_{L^2(\mathbb{B}_r^c)},
	\]
	whenever $h_1,h_2\in\mathcal{F}_p(\R^d)$.
	Choose and fix an approximation
	to the identity $\{\varphi_\varepsilon\}_{\varepsilon>0}$ such that
	$\varphi_\varepsilon\in C^\infty_c(\R^d)$ for every
	$\varepsilon>0$.
	We may apply the preceding displayed inequality, once with $(h_1\,,h_2)=(h\,,h -(\varphi_\varepsilon*h))$
	and once with $(h_1\,,h_2)=(|h|\,,|h| -(\varphi_\varepsilon*|h|))$, in order to see that
	as $\varepsilon\downarrow0$,
	$(\varphi_\varepsilon* |h|*|\tilde{h}|)(x)
	\to (|h|*|\tilde{h}|)(x)$ and	
	$(\varphi_\varepsilon* h*\tilde{h})(x)
	\to (h*\tilde{h})(x)$, both valid uniformly for all $x\in\R^d$ that satisfy $\|x\|>2r$. This
	uses only the classical fact that
	\[
		\lim_{\varepsilon\downarrow0}
		\left( \left\| g - (\varphi_\varepsilon*g) \right\|_{L^p(\mathbb{B}_r)}
		+ \left\| g - (\varphi_\varepsilon*g) \right\|_{L^q(\mathbb{B}_r^c)}
		+ \left\| g - (\varphi_\varepsilon*g) \right\|_{L^2(\mathbb{B}_r^c)}
		\right)=0,
	\]
	for either $g=h$ or $g=|h|$ (see Stein \cite{Stein}), and readily implies the uniform
	continuity and boundedness of
	$h*\tilde{h}$ and $|h|*|\tilde{h}|$ off $\mathbb{B}_r$ for arbitrary $r>0$.
	This completes the proof that $h*\tilde{h}$ and $|h|*|\tilde{h}|$ are functions of positive type.
\end{proof}

\subsection{On Condition (\ref{Dalang})}\label{sec:Dalang}

As was mentioned in the Introduction, it was shown by Dalang \cite{Dalang1999}
that when $f$ is tempered and
non negative, Condition \eqref{Dalang} is an optimal sufficient condition
for the existence of a unique random-field solution to the SPDE \eqref{SHE}.
In this section, we say a few words about Dalang's Condition \eqref{Dalang}
in the setting where $f$ is a function that satisfies \eqref{f=h*h}.

First recall that the vector space $H_{-1}(\R^d)$ denotes
the completion of all rapidly-decreasing, real-valued $C^{\infty}$-functions
on $\R^d$ in the norm
\[
	\| h \|_{H_{-1}(\R^d)} :=
	\left( \int_{\R^d} \frac{|\hat{ h }(x)|^2}{1+\|x\|^2}\,\d x\right)^{1/2}.
\]
It follows immediately that $H_{-1}(\R^d)$ is Hilbertian, once endowed with the above norm and
the associated inner product,
\[
	\langle \psi_1\,, \psi_2\rangle_{H_{-1}(\R^d)} :=
	\int_{\R^d}\frac{\hat{\psi}_1(x)\overline{\hat{\psi}_2(x)}}{1+\|x\|^2}\,\d x.
\]

Next, let us define $\bm{v}_\lambda$ to be the \emph{$\lambda$-potential density}
of the heat semigroup on $\R^d$
for every $\lambda>0$. That is,
\begin{equation}\label{HRD}
	\bm{v}_\lambda(x) = \int_0^\infty\e^{-\lambda t} \bm{p}_t(x)\,\d t
	\qquad\text{for all $x\in\R^d$},
\end{equation}
where $\bm{p}$ denotes the heat kernel, defined as
\begin{equation}\label{p}
	\bm{p}_t(x) := \frac{1}{(2\pi t)^{d/2}}\exp\left(-\frac{\|x\|^2}{2t}\right)\qquad
	\text{for all $t>0$ and $x\in\R^d$}.
\end{equation}

Note that $\lambda \bm{v}_\lambda$ is  a probability density function on $\R^d$
for every $\lambda>0$.

A general theorem of Foondun and Khoshnevisan \cite{FoondunKhoshnevisan2013}
implies that when $f = h* \tilde{h}$ is a function and $h\ge0$ (and hence $f\ge0$), Dalang's condition
\eqref{Dalang} holds if and only if \footnote{In general, the proof of \eqref{FK}
	requires some effort. But, for example when $h\in L^1(\R^d)\cap L^2(\R^d)$,
	Young's inequality yields $f\in \cap_{\nu\in[1,\infty]} L^\nu(\R^d)$
	and hence \eqref{FK} is a direct consequence of Parseval's identity and the elementary facts that:
	(i) The Fourier transform of $\bm{v}_\lambda$ is
	$\hat{\bm{v}}_\lambda(z) :=\int_{\R^d}\exp\{ix\cdot z\}\bm{v}_\lambda(x)\,\d x
	= 2[2\lambda + \|z\|^2]^{-1}$ for all $z\in\R^d$; and
	(ii) $\hat{f}(z) := \int_{\R^d}\exp\{ix\cdot z\}f(x)\,\d x
		=|\hat{h}(z)|^2$ for all $z\in\R^d$.}
\begin{equation}\label{FK}
	\int_{\R^d}\bm{v}_\lambda(x)f(x)\,\d x<\infty
	\quad\text{for one, hence all, $\lambda>0$}.
\end{equation}
[To use the results of \cite{FoondunKhoshnevisan2013} we need also the easy-to-prove fact
that $h*\tilde{h}$ is lower semicontinuous in the present setting.]
An earlier result, applicable in the present context, can be found in Peszat \cite[Theorem 0.1]{Peszat2002}.



Let us note also that if $h\ge0$ and $h\in\mathcal{F}_p(\R^d)$
for some $p>1$, then $f$ is bounded uniformly on $\mathbb{B}_r^c$ for all $r>0$.
Because in addition $\bm{v}_\lambda$ is integrable,
it follows from \eqref{FK} that, in the present setting wherein
$h\ge0$ and $h\in\cup_{p>1}\mathcal{F}_p(\R^d)$,
the harmonic-analytic condition \eqref{Dalang}---equivalently
the potential-theoretic condition \eqref{FK}---%
is equivalent to the following local version of \eqref{FK}:
\begin{equation}\label{FK:loc}
	\int_{\mathbb{B}_1}\bm{v}_\lambda(x)f(x)\,\d x<\infty
	\qquad\text{for one, hence all, $\lambda>0$.}
\end{equation}
Next, we re-interpret \eqref{FK:loc}:
It is well known, and easy to verify directly (see, for example,
\cite[Section 3.1, Chapter 10]{KhMPP}), that
\begin{equation}\label{pot:bound}
	\bm{v}_\lambda(x)\asymp \|x\|^{-d+1}\omega_d(\|x\|)
	\qquad\text{uniformly for all $x\in\mathbb{B}_1$},
\end{equation}
where $\omega_d$ was defined in \eqref{eq:omega}.
Thus, when $h\ge0$ and $h\in\cup_{p>1}\mathcal{F}_p(\R^d)$,
\begin{equation}\label{h:kappa}
	h\in H_{-1}(\R^d)\quad\text{iff}\quad
	\int_{\mathbb{B}_1} \|x\|^{-d+1}\omega_d(\|x\|)f(x)\,\d x<\infty.
\end{equation}

Next let us consider the general case where $h\in\cup_{p>1}\mathcal{F}_p(\R^d)$
is possibly signed.
Since $|f(x)|\le\big(|h|*|\tilde{h}|\big)(x)$ for all $x\in\R^d$,
we can apply \eqref{h:kappa} with $(h\,,f)$ replaced with $(|h|\,,|h|*|\tilde{h}|)$
in order to see that
\[
	\text{if}\quad\int_0^1\sup_{\|y\| > r}\big(|h|*|\tilde{h}|\big)(y)\omega_d(r)\,\d r<\infty,
	\quad\text{then}\quad |h|\in H_{-1}(\R^d).
\]
If $f\ge0$ and $x\mapsto f(x)$ is a radial function on $\R^d$ that decreases as $\|x\|$ increases,
then $\sup_{\|y\| > \|x\|}\big(|h|*|\tilde{h}|\big)(y)= f(x)$, and the above sufficient condition for $|h|=h$
to be in $H_{-1}(\R^d)$  appears
earlier in the literature, in the context of well-posedness for SPDEs.
See Dalang and Frangos \cite{DalangFrangos}, Karczewska
and Zabczyk\cite{KZ}, Peszat \cite{Peszat2002},
and Peszat and Zabczyk \cite{PeszatZabczyk2000}.
Closely-related results can be found in Cardon-Weber and Millet
\cite{CardonWeberMillet}, Dalang \cite{Dalang1999},
Foondun and Khoshnevisan \cite{FoondunKhoshnevisan2013},
and Millet and Sanz-Sol\'e \cite{MilletSanz}.

Recall the vector space $\cup_{p>1}\mathcal{G}_p(\R^d)$ (Definition
\ref{def:F_p}) and the inequalities of Proposition \ref{pr:PD}  in order to deduce the following.

\begin{lemma}\label{lem:Dalang:Lp}
	If $h\in\mathcal{G}_p(\R^d)$ for some $p>1$, then $|h|\in H_{-1}(\R^d)$.
	In particular, $h\in\cup_{p>1}\mathcal{G}_p(\R^d)$ implies that
	$\int_{\R^d}\bm{v}_\lambda(x)\big(|h|*|\tilde{h}|\big)(x)\,\d x<\infty$
	for some, hence all, $\lambda>0$.
\end{lemma}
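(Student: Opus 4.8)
The plan is to verify the hypothesis of the implication stated just before the lemma --- namely, that $\int_0^1\big(\sup_{\|y\|>r}(|h|*|\tilde h|)(y)\big)\,\omega_d(r)\,\d r<\infty$ forces $|h|\in H_{-1}(\R^d)$ --- and then to read off the last sentence of the lemma from the potential-theoretic equivalences of \S\ref{sec:Dalang}.

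First I would record the routine preliminaries. If $h\in\mathcal{G}_p(\R^d)$ then $|h|\in\mathcal{G}_p(\R^d)\subseteq\mathcal{F}_p(\R^d)$ as well: the three seminorms $\|h\|_{L^p(\mathbb{B}_r)}$, $\|h\|_{L^q(\mathbb{B}_r^c)}$, $\|h\|_{L^2(\mathbb{B}_r^c)}$ that enter the defining condition \eqref{cond:omega} are unchanged under $h\mapsto|h|$, and the inclusion $\mathcal{G}_p\subseteq\mathcal{F}_p$ is part~1 of Lemma~\ref{lem:F_p}. In particular Proposition~\ref{pr:PD} applies with $|h|$ in place of $h$, so $|h|*|\tilde h|=|h|*\widetilde{|h|}$ is a function of positive type and the implication just quoted is indeed available.

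The core step is therefore to bound $\int_0^1\big(\sup_{\|y\|>r}(|h|*|\tilde h|)(y)\big)\,\omega_d(r)\,\d r$. I would apply inequality \eqref{eq:PD} of Proposition~\ref{pr:PD} with $r/2$ in place of $r$, which gives
\[
	\sup_{\|y\|>r}\big(|h|*|\tilde h|\big)(y)\le 2\,\|h\|_{L^p(\mathbb{B}_{r/2})}\,\|h\|_{L^q(\mathbb{B}_{r/2}^c)}+\|h\|_{L^2(\mathbb{B}_{r/2}^c)}^2\qquad\text{for all }r>0,
\]
with $q=p/(p-1)$. Substituting $s=r/2$ and using the elementary scaling bound $\omega_d(2s)\le 2\,\omega_d(s)$ --- which one checks case by case from \eqref{eq:omega} --- the integral in question is at most a constant multiple of
\[
	\int_0^1\Big(\|h\|_{L^p(\mathbb{B}_s)}\,\|h\|_{L^q(\mathbb{B}_s^c)}+\|h\|_{L^2(\mathbb{B}_s^c)}^2\Big)\,\omega_d(s)\,\d s,
\]
which is finite precisely because $h\in\mathcal{G}_p(\R^d)$; see \eqref{cond:omega}. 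This yields $|h|\in H_{-1}(\R^d)$.

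For the final assertion: since $|h|\ge0$ and $|h|\in\cup_{p>1}\mathcal{F}_p(\R^d)$, relation \eqref{FK} applied with $|h|$ in place of $h$ (hence with $|h|*|\tilde h|$ in place of $f$), together with the fact that $|h|\in H_{-1}(\R^d)$ is equivalent to Dalang's condition \eqref{Dalang} for $|h|$, gives $\int_{\R^d}\bm{v}_\lambda(x)\big(|h|*|\tilde h|\big)(x)\,\d x<\infty$ for one, hence all, $\lambda>0$. I expect the only point requiring genuine care to be the bookkeeping in the core step: \eqref{eq:PD} is phrased over balls of radius $r/2$ rather than $r$, so one must track the resulting dilation through the change of variables, and that is exactly where the estimate $\omega_d(2s)\lesssim\omega_d(s)$ --- the sole place the dimension enters, via the logarithmic factor when $d=2$ --- is used.
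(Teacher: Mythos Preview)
Your proof is correct and follows essentially the same route as the paper, which merely remarks that the lemma is deduced by combining the definition of $\mathcal{G}_p(\R^d)$ with the inequalities of Proposition~\ref{pr:PD}. You have simply fleshed out that one-line sketch, including the change of variables $s=r/2$ and the dimension-dependent estimate $\omega_d(2s)\le 2\,\omega_d(s)$, both of which are the natural details one must supply.
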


In light of Theorem 1.2 of Foondun and Khoshnevisan \cite{FoondunKhoshnevisan2013},
Lemma \ref{lem:Dalang:Lp} implies a precise version of the somewhat
subtle assertion that sufficient  integrability of $h$ ensures
good decay at infinity of the Fourier transform of $|h|$.

\subsection{Proof of part 2 of Theorem \ref{th:1}}\label{sec:harmonic_3.3}

In this section, we prove part 2 of Theorem \ref{th:1}.
In fact, the work involves proving the following harmonic-analytic result.

\begin{proposition}\label{pr:HA}
	Suppose $f\in\mathfrak{M}_+(\R^d)$; that is, $f$ is a
	nonnegative-definite tempered Borel measure on $\R^d$.
	Suppose, in addition, that
	\begin{equation}\label{pre:Dalang}
		\int_{\R^d} \prod_{j=1}^d \left(\frac{1}{1\vee y_j^2}\right)\hat{f}(\d y)<\infty.
	\end{equation}
	Then, $\hat{f}\{0\}=0$ iff $f(\mathbb{B}_r)=o(r^d)$ as $r\to\infty$.
\end{proposition}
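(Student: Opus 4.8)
The plan is to translate the condition $f(\mathbb{B}_r)=o(r^d)$ into a statement about the behavior of $\hat f$ near the origin, and to exploit the fact that $\hat f\{0\}$ is captured by a Cesàro-type average of $f$. The natural device is to convolve $f$ against an approximate identity adapted to large balls. Concretely, let $\varphi$ be a fixed nonnegative Schwartz function with $\widehat\varphi(0)=\int\varphi=1$, and for $R>0$ set $\varphi_R(y):=R^{-d}\varphi(y/R)$, so that $\widehat{\varphi_R}(x)=\widehat\varphi(Rx)$. Using the Fourier-transform pairing \eqref{Fourier_transform} (applied to the tempered measure $f$ and the Schwartz function $\varphi_R$), one gets
\[
	\int_{\R^d}\widehat{\varphi}(Rx)\,\hat f(\d x) = \int_{\R^d}\varphi_R(y)\,\d f(y)
	= R^{-d}\int_{\R^d}\varphi(y/R)\,\d f(y).
\]
As $R\to\infty$, the left-hand side converges to $\hat f\{0\}$ by dominated convergence, provided we know $\widehat\varphi$ is bounded and $\hat f$ is integrable against the dominating envelope — and here condition \eqref{pre:Dalang} is exactly what guarantees $\int \widehat\varphi(Rx)\,\hat f(\d x)$ makes sense and that the dominated-convergence step goes through, since one can choose $\varphi$ so that $|\widehat\varphi(x)|\lesssim \prod_j (1\vee x_j^2)^{-1}$. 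Hence $\hat f\{0\}=\lim_{R\to\infty} R^{-d}\int \varphi(y/R)\,\d f(y)$, i.e. $\hat f\{0\}$ equals the asymptotic $\varphi$-weighted average of $f$.

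The remaining task is purely real-variable: to show that
\[
	\lim_{R\to\infty} R^{-d}\int_{\R^d}\varphi(y/R)\,\d f(y) = 0
	\quad\text{if and only if}\quad
	f(\mathbb{B}_r)=o(r^d)\ \text{as }r\to\infty.
\]
For the ``if'' direction, split the integral according to dyadic annuli $\mathbb{B}_{2^{k+1}R}\setminus\mathbb{B}_{2^kR}$ (together with the core $\mathbb{B}_R$); on the $k$-th annulus $\varphi(y/R)\lesssim_N (1+2^k)^{-N}$ for any $N$ by rapid decrease, while $f$ of that annulus is $\le f(\mathbb{B}_{2^{k+1}R}) = o((2^{k+1}R)^d)$, so the $k$-th term is $o(R^d)\cdot 2^{kd}(1+2^k)^{-N}$ with $N>d$ chosen large; summing the geometric tail and using that the $o(\cdot)$ is uniform on each fixed annulus (after an $\varepsilon$–$\delta$ splitting into finitely many ``bad'' small $k$ and a uniformly controlled tail) yields the claim. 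For the ``only if'' direction, bound $f(\mathbb{B}_r)$ from below: choose $\varphi\ge c\mathbf 1_{\mathbb{B}_1}$ for some $c>0$, so that $R^{-d}\int\varphi(y/R)\,\d f(y)\ge cR^{-d}f(\mathbb{B}_R)$, whence $R^{-d}f(\mathbb{B}_R)\to 0$. (This also uses $\varphi\ge0$; one should pick $\varphi$ from the start to be simultaneously nonnegative, Schwartz, bounded below on $\mathbb{B}_1$, and with $|\widehat\varphi|$ dominated by $\prod_j(1\vee x_j^2)^{-1}$ — e.g. a suitable product of one-dimensional Fejér-type kernels, or a fixed mollified indicator, will do.)

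The main obstacle I expect is the bookkeeping in the ``if'' direction: converting the pointwise statement $f(\mathbb{B}_r)=o(r^d)$ into a bound on $R^{-d}\int\varphi(y/R)\,\d f(y)$ uniformly in $R$ requires care, because $f$ need not be absolutely continuous and the annular estimate must be summed against the rapidly-decaying weight without losing the $o(\cdot)$. The clean way is: given $\varepsilon>0$, pick $r_0$ with $f(\mathbb{B}_r)\le\varepsilon r^d$ for all $r\ge r_0$; for $R\ge r_0$ the annuli at scale $\ge R\ge r_0$ all obey the $\varepsilon$-bound, giving a contribution $\le C\varepsilon R^d$ with $C=C(\varphi,d)$ from the convergent dyadic sum, while there are no ``small'' annuli to worry about since the smallest relevant scale is $R$ itself; the core term is $\le \|\varphi\|_\infty f(\mathbb{B}_R)\le \|\varphi\|_\infty\varepsilon R^d$. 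Dividing by $R^d$ and letting $\varepsilon\downarrow0$ finishes it. A secondary technical point is justifying the dominated-convergence passage $\int\widehat\varphi(Rx)\,\hat f(\d x)\to\hat f\{0\}$: here $\widehat\varphi(Rx)\to\mathbf 1_{\{0\}}(x)$ pointwise and $|\widehat\varphi(Rx)|\le |\widehat\varphi(Rx)|\le 1\wedge(\text{envelope})$, and for $R\ge1$ the envelope $\prod_j(1\vee(Rx_j)^2)^{-1}$ is dominated by $\prod_j(1\vee x_j^2)^{-1}$, which is $\hat f$-integrable by \eqref{pre:Dalang} — so this step is routine once $\varphi$ is chosen correctly.
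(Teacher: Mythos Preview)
Your argument is correct and follows the same conceptual route as the paper: pair $f$ with a dilated kernel, pass to the Fourier side via \eqref{Fourier_transform}, and use \eqref{pre:Dalang} to justify dominated convergence so that the pairing collapses to $\hat f\{0\}$. The difference is in the choice of kernel. The paper takes the concrete, compactly supported product-Fej\'er kernel $I_N*\tilde I_N$, where $I_N=N^{-d}\bm{1}_{[0,N]^d}$; this yields the explicit Fourier side $2^d\int\prod_j\frac{1-\cos(Ny_j)}{(Ny_j)^2}\,\hat f(\d y)$ and, crucially, the two-sided pointwise bounds $(2N)^{-d}\bm{1}_{[-N/2,N/2]^d}\le I_N*\tilde I_N\le N^{-d}\bm{1}_{[-N,N]^d}$, which make the real-variable equivalence with $f(\mathbb{B}_r)=o(r^d)$ a one-line sandwich rather than your dyadic tail sum. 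The trade-off is that $I_N*\tilde I_N$ is not Schwartz, so the paper inserts a heat-kernel regularization $\bm{p}_\varepsilon$ to justify the Parseval step, whereas your Schwartz $\varphi$ makes that automatic. Incidentally, your parenthetical suggestion of ``a product of one-dimensional Fej\'er-type kernels'' is exactly the paper's choice---but those kernels are not Schwartz, so if you go that route you should drop the Schwartz hypothesis on $\varphi$ and instead use compact support directly (which in fact simplifies your ``if'' direction considerably). A Gaussian $\varphi$ would serve your stated requirements without fuss.
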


Proposition \ref{pr:HA} readily implies part 2 of Theorem \ref{th:1}
since  \eqref{Dalang} implies \eqref{pre:Dalang}. Therefore, it remains to prove
Proposition \ref{pr:HA}.

\begin{proof}[Proof of Proposition \ref{pr:HA}]
	Consider, for every real number $N>0$, the probability density function
	\begin{align}\label{I_N}
		I_N := N^{-d} \bm{1}_{[0,N]^d}\qquad\text{on $\R^d$}.
	\end{align}
	Then,
	\begin{equation}\label{I*I}
		(I_N*\tilde{I}_N)(x)  = N^{-d}\prod_{j=1}^d \left(1- \frac{|x_j|}{N}\right)_+
		\qquad\text{for every $x = (x_1\,,\ldots,x_d)\in\R^d$},
	\end{equation}
	where $a_+:=\max\{a\,,0\}$. Because
	$\tfrac12\bm{1}_{[-1/2,1/2]}(a)
	\le (1-|a|)_+ \le \bm{1}_{[-1,1]}(a)$
	for every $a\in\R$,
	\begin{equation}\label{III}
		(2N)^{-d}\bm{1}_{[-N/2, N/2]^d}
		\le I_N*\tilde{I}_N \le
		N^{-d}\bm{1}_{[-N, N]^d}\qquad\text{on $\R^d$}.
	\end{equation}
	
	Since $I_N*\tilde{I}_N \in C_c(\R^d)$ and the measure $f$ is locally finite,
	it follows that $I_N*\tilde{I}_N*f$ is continuous
	and bounded. Because $I_N*\tilde{I}_N*f$ is
	also nonnegative definite, it is therefore maximized at $0$. These properties,
	and the well-known fact that $\{\bm{p}_t\}_{t>0}$ is a (convolution)
	Feller semigroup, together
	imply that
	\[
		\left( I_N*\tilde{I}_N*f\right)(0) =
		\lim_{\varepsilon \rightarrow 0} \left( I_N*\tilde{I}_N*\bm{p}_{\varepsilon}*f
		\right)(0),
	\]
	where $\bm{p}_\varepsilon$ denotes the Gaussian heat kernel of \eqref{p}.
	We may apply Parseval's formula \eqref{Fourier_transform} next in order to see that
	\[
		\left(\bm{p}_{\varepsilon}*f\right) (x) =
		\int_{\R^d}\exp\left( ix\cdot y -\frac{\varepsilon}{2}\|y\|^2\right)
		\hat{f}(\d y)\qquad\text{for every $x\in\R^d$ and $\varepsilon>0$}.
	\]
	Therefore, Tonelli's theorem readily yield the identity,
	\[
		\left( I_N*\tilde{I}_N*\bm{p}_{\varepsilon}*f\right)(0)
		= 2^d \int_{\R^d}\hat{f}(\d y) \ \e^{-\varepsilon\|y\|^2/2}
		\prod_{j = 1}^d\frac{1 - \cos(Ny_j)}{(Ny_j)^2},
	\]
	where $2[1-\cos 0]/0^2:=1$.
	Let $\varepsilon\downarrow0$ and appeal to the monotone convergence theorem
	in order to arrive at the identity,
	\[
		\left( I_N*\tilde{I}_N*f \right)(0)
		= 2^d \int_{\R^d}\hat{f}(\d y) \
		\prod_{j = 1}^d\frac{1 - \cos(Ny_j)}{(Ny_j)^2}.
	\]
	Because $\prod_{j=1}^d [1 -\cos(a_j)]/a^2_j \le 2^{-d}
	\prod_{j=1}^d \min( 1\,, a_j^{-2})$
	for all $a=(a_1\,,\ldots,a_d)\in\R^d\setminus\{0\}$,
	the dominated convergence theorem and \eqref{pre:Dalang} together ensure that
	$( I_N*\tilde{I}_N*f)(0)$ converges to $\hat{f}(\{0\})$ as $N\to\infty$. Thus, we
	may deduce from \eqref{III} that
	\[
		2^{-d}\limsup_{N\to\infty}\frac{f\left([-N\,,N]^d\right)}{(2N)^d}  \le
		\hat{f}\{0\} \le 2^d\liminf_{N\to\infty} \frac{f\left([-N\,,N]^d\right)}{(2N)^d}.
	\]
	Because $|\mathbb{B}_N|\propto N^d$ and
	$\mathbb{B}_N\subset[-N\,,N]^d\subset\mathbb{B}_{N\sqrt{d}}$, the above
	inequalities imply that $\hat{f}\{0\}=0$
	if and only if $f(\mathbb{B}_N)=o(|\mathbb{B}_N|)$ as $N\to\infty$.	
\end{proof}

\subsection{Proof of part 4 of Theorem \ref{th:1}}\label{subsec:part4}
Since $\hat{f}\in\mathfrak{M}_+(\R^d)$,
one can see easily that $I_N*\tilde{I}_N*\hat{f}$ is a continuous, nonnegative-definite
function for every $N>0$, where $I_N$ was defined in \eqref{I_N}. In particular,
\[
	(I_N*\tilde{I}_N*\hat{f})(x)\le(I_N*\tilde{I}_N*\hat{f})(0)
	\qquad\text{for every $x\in\R^d$ and $N>0$.}
\]
Multiply both sides by $N^d$ and let $N\to 0$ in order to deduce from \eqref{I*I} and
the dominated convergence theorem,
$\hat{f}\{x\} \le \hat{f}\{0\}$ for every $x\in\R^d$.
This completes the proof.\qed

\section{Proof of part 3 of Theorem \ref{th:1}}\label{sec:part3}
In the previous section we verified part 2 of Theorem \ref{th:1}.
Now we establish the third part of that theorem. Part 1 will be proved
a few sections hence.

Suppose there exists a number $c_0\in\R\setminus\{0\}$ such that
$\sigma(x)=c_0$ for all $x\in\R$.
In this case, the solution to \eqref{SHE} can be written, in mild form, as
\begin{align}\label{addive_noise}
	u(t\,, x) = 1 + c_0\int_{(0,t)\times\R^d}\bm{p}_{t-s}(x-z)
	\,\eta(\d s\,\d z).
   \end{align}
We see from this that, among other things,
$u(t)$ is a stationary, mean-one Gaussian random field. Dalang's theory
\cite{Dalang1999} ensures that $x\mapsto u(t\,,x)$ in continuous in $L^2(\Omega)$
for every $t>0$. Therefore, $x\mapsto u(t\,,x)$ has a Lebesgue-measurable
version (which we continue to write as $x\mapsto u(t\,,x)$);
see Remark \ref{rem:doob}.

Because of \eqref{addive_noise},
\begin{align*}
	& \int_{[0, N]^d}\d x
		\int_{[0, N]^d}\d y\
		{\rm Cov}\left( u(t\,, x)\,, u(t\,, y) \right)\\
	&= c_0^2\int_0^t\d s\int_{[0, N]^d}\d x
		\int_{[0,N]^d}\d y\
		\left\langle \bm{p}_s(x-\bullet) \,, \left(\bm{p}_s(y-\bullet)\right)
		*f\right\rangle_{L^2(\R^d)}
		\qquad\text{[by \eqref{Cov:eta}]}\\
	&= c_0^2\int_0^t\d s\int_{[0, N]^d}\d x
		\int_{[0,N]^d}\d y\int_{\R^{d}}\hat{f}(\d z)\
		\e^{iz\cdot(x - y) -s\|z\|^2},
\end{align*}
thanks to Parseval's identity \eqref{Fourier_transform}. Rearrange
the integrals, using Fubini's theorem, and compute directly
in order to find that
\[
	\int_{[0, N]^d}\d x
	\int_{[0, N]^d}\d y\
	{\rm Cov}\left( u(t\,, x)\,, u(t\,, y) \right)
	= 2^dc_0^2\int_{\R^{d}}\hat{f}(\d z) \frac{1 - \e^{- t \|z\|^2}}{\|z\|^2}
	\prod_{j = 1}^d\frac{1 - \cos(Nz_j)}{z_j^2},
\]
where $2[1-\cos0]/0^2 :=1$.
Since $f$ satisfies Dalang's condition
\eqref{Dalang}, the dominated convergence theorem
implies that
\begin{equation}\label{D1}
	\lim_{N \rightarrow \infty} \frac{1}{N^{2d}}\int_{[0, N]^d}\d x
	\int_{[0, N]^d}\d y\
	{\rm Cov}\left( u(t\,, x)\,, u(t\,, y) \right)=
	c_0^2 \: t\hat{f}(\{0\}).
\end{equation}

Now suppose, in addition, that $u$ is spatially ergodic.
Because $\E[u(t\,,x)]=1$ (see \eqref{addive_noise}),
von-Neumann's mean ergodic theorem (see for example Peterson \cite{Peterson}, and especially
Chapters 8 and \S9.3 of Edgar and Sucheston
\cite[Chapter 8 and Chapter 9, \S9.3]{EdgarSucheston}) implies that
$N^{-d}\int_{[0, N]^d}u(t\,, x)\,\d x$
converges in $L^2(\Omega)$ to $1$ as $N\to\infty$. Equivalently, that
\begin{equation}\label{D2}
	\lim_{N \rightarrow \infty} \frac{1}{N^{2d}}\int_{[0, N]^d}\d x
	\int_{[0, N]^d}\d y\
	{\rm Cov}\left( u(t\,, x)\,, u(t\,, y) \right) = 0.
\end{equation}
Part 3 of Theorem \ref{th:1} follows from comparing \eqref{D1} and \eqref{D2}.\qed

\section{Well posedness}

By Dalang \cite{Dalang1999},  equation \eqref{SHE} is well-posed when the spatial correlation
$f$  satisfies condition \eqref{Dalang}. In this section, we only prove the well posedness of \eqref{SHE}
when $f = h * \tilde{h}$ with $h\in\cup_{p>1}\mathcal{G}_p(\R^d)$.

\subsection{Stochastic convolutions}\label{sec:SC}
If $\Phi=\{\Phi(t\,,x)\}_{t\ge0,x\in\R^d}$ is a space-time random field, then
for all real numbers $\beta>0$ and $k\ge1$, we may define
\begin{equation}\label{N}
	\mathcal{N}_{\beta,k}(\Phi) := \sup_{t\ge0}\sup_{x\in\R^d}\e^{-\beta t}\|\Phi(t\,,x)\|_k.
\end{equation}
It is clear that $\Phi\mapsto\mathcal{N}_{\beta,k}(\Phi)$ defines a norm for
every choice of $\beta>0$ and $k\ge1$.
These norms were first introduced in \cite{FoondunKhoshnevisan2009}; see also
\cite{ConusKh2012}.
Corresponding to every $\mathcal{N}_{\beta,k}$, define $\mathbb{W}_{\beta, k}$ to
be the collection of all predictable random fields
$\Phi$ such that $\mathcal{N}_{\beta,k}(\Phi)<\infty$. We may think of elements of
$\mathbb{W}_{\beta,2}$ as \emph{Walsh-integrable random fields with Lyapunov exponent
$\le\beta$}. It is easy to see that each
$(\mathbb{W}_{\beta,k}\,,\mathcal{N}_{\beta,k})$
is a Banach space.

Suppose that the underlying probability space $(\Omega\,,\mathcal{F},\P)$ is large enough
to carry a space-time white noise $\xi$ (if not then enlarge it in the usual way).
Using that noise, we may formally
define, for every fixed measurable function $h:\R^d\to\R$,  a new noise $\eta^{(h)}$ as follows:
\begin{equation}\label{eta^h}
	\eta^{( h )}(\d s\,\d x) := \int_{\R^d} h (x-y)\,\xi(\d s\,\d y)\,\d x.
\end{equation}
Somewhat more precisely, if $H$ is a predictable random field such that
\[
	\E\int_0^t\d s\int_{\R^d}\d y\
	\left|  \left( H(s)*\tilde{ h }\right)(y)\right|^2<
	\infty\qquad\text{for every $t>0$},
\]
then Walsh's theory of stochastic integration ensures that the Walsh stochastic integral
\[
	\int_{(0,t)\times\R^d} H(s\,,x)\,\eta^{( h )}(\d s\,\d x)
	:= \int_{(0,t)\times\R^d} \left( H(s)*\tilde{ h }\right)(y)\,\xi(\d s\,\d y)
\]
is well-defined for every $t\ge0$, and in fact defines a continuous, mean-zero,
$L^2(\Omega)$ martingale indexed by $t\ge0$. Moreover, the variance of this martingale
at time $t>0$ is
\begin{align}\label{SID}
	\E\left(\left| \int_{(0,t)\times\R^d} H(s\,,x)\,\eta^{( h )}(\d s\,\d x)\right|^2\right)
		&=\E\int_0^t\d s\int_{\R^d}\d y\
		\left|  \left( H(s)*\tilde{ h }\right)(y)\right|^2\\\notag
	&=\int_0^t\d s\int_{\R^d}\d y\int_{\R^d}\d z\
		\E\left[H(s\,,y)H(s\,,z)\right]f(y-z),
\end{align}
provided, for example, that the preceding integral is absolutely convergent.
(As it is the case, here and elsewhere in this section, $f$ is defined in terms of $h\in
\cup_{p>1}\mathcal{G}_p(\R^d)$ via \eqref{f=h*h}.)

It is easy to see from this that $\eta^{(h)}$ is a particular construction of the noise
$\eta$ of the Introduction (see also Conus et al \cite{CJKS2013}), but has the advantage that
it provides a coupling $h\mapsto \eta^{(h)}$ that works simultaneously for many different
choices of $h$, whence spatial correlation functions $f$.

The preceding stochastic integration (see \eqref{SID}) frequently
allows for the integration of a large family of
predictable random fields $H$. The following simple result
highlights a large subclass of such  random fields when $h\in\cup_{p>1}\mathcal{F}_p(\R^d)$.

\begin{lemma}\label{lem:convergent}
	Suppose $ h \in\mathcal{F}_p(\R^d)$ for some $p>1$, and $H$ is
	a predictable process for which there exists a real number $r>0$ such that
	\begin{equation}\label{cond:H}
		\sup_{s\in[0,T]}\sup_{y\in\R^d}\E\left(|H(s\,,y)|^2\right)<\infty
		\quad\text{and}\quad
		\E\left(|H(t\,,x)|^2\right)=0\quad
		\text{for every $t>0$ and $x\in \mathbb{B}_r^c$}.
	\end{equation}
	Then, the final integral in \eqref{SID}
	is absolutely convergent and hence \eqref{SID} is valid  for every $t>0$.
\end{lemma}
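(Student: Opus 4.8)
The plan is to verify the only thing that is actually needed: that the final integral in \eqref{SID} is \emph{absolutely} convergent. Once this is in hand, both identities in \eqref{SID} follow from routine applications of Tonelli's and Fubini's theorems, together with the elementary expansion of $\|H(s)*\tilde h\|_{L^2(\R^d)}^2$ that is used to pass from one form of the integral to the other. So fix $t>0$ (with $t\le T$, which is the relevant range), and put $M^2:=\sup_{s\in[0,T]}\sup_{y\in\R^d}\E(|H(s,y)|^2)$; this is finite by \eqref{cond:H}. Since \eqref{cond:H} forces $H(s,y)=0$ a.s.\ whenever $y\in\mathbb{B}_r^c$, the Cauchy--Schwarz inequality in $L^2(\Omega)$ gives
\[
	\big|\E[H(s,y)H(s,z)]\big|\le \|H(s,y)\|_2\,\|H(s,z)\|_2\le M^2\,\bm{1}_{\mathbb{B}_r}(y)\,\bm{1}_{\mathbb{B}_r}(z),
\]
for all $s\in[0,t]$ and $y,z\in\R^d$. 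Recalling that $|f(w)|\le(|h|*|\tilde h|)(w)$ for every $w\in\R^d$, it follows that the absolute value of the final integral in \eqref{SID} is at most
\[
	M^2\,t\int_{\mathbb{B}_r}\int_{\mathbb{B}_r}\big(|h|*|\tilde h|\big)(y-z)\,\d y\,\d z.
\]

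It remains to bound this last quantity. For each fixed $z\in\mathbb{B}_r$ the vector $y-z$ ranges over $z+\mathbb{B}_r\subseteq\mathbb{B}_{2r}$ as $y$ ranges over $\mathbb{B}_r$, so the inner integral is at most $\int_{\mathbb{B}_{2r}}(|h|*|\tilde h|)(w)\,\d w$; integrating the remaining variable over $\mathbb{B}_r$ then yields the bound $|\mathbb{B}_r|\int_{\mathbb{B}_{2r}}(|h|*|\tilde h|)(w)\,\d w$. Because $h\in\mathcal{F}_p(\R^d)$, Proposition \ref{pr:PD} (applied with $r$ there replaced by $2r$) shows that
\[
	\int_{\mathbb{B}_{2r}}\big(|h|*|\tilde h|\big)(w)\,\d w\lesssim\int_0^{4r}s^{d-1}\Big(\|h\|_{L^p(\mathbb{B}_s)}\|h\|_{L^q(\mathbb{B}_s^c)}+\|h\|_{L^2(\mathbb{B}_s^c)}^2\Big)\d s,
\]
and the right-hand side is finite by \eqref{eq:F_p} of Lemma \ref{lem:F_p}. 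Combining the displays shows that the final integral in \eqref{SID} converges absolutely, which is all that was claimed.

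The argument is mostly bookkeeping, and the only substantive input is the local integrability of $|h|*|\tilde h|$ near the origin --- i.e.\ the finiteness of $\int_{\mathbb{B}_{2r}}(|h|*|\tilde h|)$ --- which has already been established in Proposition \ref{pr:PD}. I expect the one point requiring a moment's care to be measurability: predictability of $H$ makes $(s,y,z)\mapsto\E[H(s,y)H(s,z)]$ jointly measurable, so that Tonelli's theorem applies legitimately to each interchange of integrals used above, and the absolute convergence just established then licenses the corresponding Fubini interchanges needed to recover \eqref{SID} itself.
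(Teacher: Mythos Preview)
Your proof is correct and follows essentially the same route as the paper's: both use Cauchy--Schwarz and the support condition to reduce to $M^2 t\,|\mathbb{B}_r|\int_{\mathbb{B}_{2r}}(|h|*|\tilde h|)(w)\,\d w$, and then invoke Proposition~\ref{pr:PD} (via \eqref{eq:PD:int}) for the finiteness. Your version is in fact slightly more detailed, and your remark that the relevant range is $t\le T$ is a fair observation given how the hypothesis \eqref{cond:H} is phrased.
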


\begin{proof}
	Choose and fix an arbitrary $t>0$. In accord with our earlier
	remarks, and thanks to the Cauchy--Schwarz inequality,
	it suffices to prove that
	\[
		J := \int_0^t\d s\int_{\mathbb{B}_r}\d y\int_{\mathbb{B}_r}\d z\ \|H(s\,,y)\|_2\|H(s\,,z)\|_2
		\left| f(y-z)\right|
		<\infty\qquad\text{for every $t>0$}.
	\]
	But the triangle inequality readily yields
	\[
		J\le |\mathbb{B}_r|
		\left(\int_0^t\sup_{y\in\R^d}\|H(s\,,y)\|_2^2\,\d s\right) \left(
		\int_{\mathbb{B}_{2r}} \left(| h |*|\tilde{ h }|\right)(w)\d w\right),
	\]
	which is finite thanks to \eqref{cond:H} and Proposition \ref{pr:PD}; see in particular
	\eqref{eq:PD:int}.
\end{proof}

The second portion of \eqref{cond:H} involves a compact-support condition which can sometimes
be reduced to a decay-type condition. We exemplify that next for a specific
family of the form $H(s\,,y) = \bm{p}_{t-s}(x-y)Z(s\,,y)$, where $t>s$ and $x\in\R^d$
are fixed and $\bm{p}$ denotes the heat kernel
[see \eqref{p}].
With this choice, the following ``stochastic convolution'' is a well-defined random field
provided that it is indeed defined properly as a Walsh integral for every $t>0$ and $x\in\R^d$:
\begin{equation}\label{SC}
	\left( \bm{p}\circledast Z\eta^{( h )}\right)(t\,,x) := \int_{(0,t)\times\R^d}
	\bm{p}_{t-s}(x-y) Z(s\,,y)\,\eta^{( h )}(\d s\,\d y).
\end{equation}

For every $k\ge2$, let $z_k^k$ denote the optimal constant of the $L^k(\Omega)$-form of
the Burkholder--Davis--Gundy inequality \cite{Burkholder,BDG,BG};
that is, for every continuous
$L^2(\Omega)$-martingale $\{M_t\}_{t\ge0}$,
and all real numbers $k\ge2$ and $t\ge0$,
\[
	\E\left( |M_t|^k\right) \le z_k^k\E\left( \langle M\rangle_t^{k/2}\right).
\]
Then,
\begin{equation}\label{z_k}
	z_2=1\qquad\text{and }\qquad
	z_k\le 2\sqrt{k}\qquad\text{for every $k>2$}.
\end{equation}
The first assertion is the basis of It\^o's stochastic calculus, and the second is due
to Carlen and Kree \cite{CarlenKree1991}, who also proved that $\lim_{k\to\infty} (z_k/\sqrt k) = 2$.
The exact value of $z_k$ is computed in the celebrated paper of Davis \cite{Davis1976}.

The following provides a natural condition for the stochastic convolution to be a well-defined
random field, the stochastic integral being defined in the sense of Walsh \cite{Walsh}, and extends
Proposition 6.1
of Conus et al \cite{CJKS2013} to the case that $f$ is possibly signed.
It might help to recall that $\bm{v}_\beta$ denotes the $\beta$-potential
kernel [see \eqref{HRD}].

\begin{lemma}[A stochastic Young inequality]\label{lem:Young}
	Suppose that $Z\in\mathbb{W}_{\beta,k}$ for some $\beta>0$, $k\ge2$,
	and that $ h \in\mathcal{G}_p(\R^d)$ for some $p>1$. Then,
	the stochastic convolution in \eqref{SC} is a well-defined Walsh
	integral,
	\[
		\mathcal{N}_{\beta,k}\left(\bm{p}\circledast Z\eta^{( h )}\right)
		\le z_k \: \mathcal{N}_{\beta,k}(Z) \sqrt{\frac12\int_{\R^d} \bm{v}_\beta(x)
		|f(x)|\,\d x},
	\]
	and the integral under the square root is finite.
\end{lemma}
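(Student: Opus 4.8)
The plan is to fix $t>0$ and $x\in\R^d$, realize the stochastic convolution as the terminal value $M_t$ of a Walsh martingale driven by the underlying white noise $\xi$, apply the Burkholder--Davis--Gundy inequality with its optimal constant $z_k$, and then control the quadratic variation using the factorization $f=h*\tilde h$ together with Minkowski's integral inequality; the $\beta$-potential kernel $\bm{v}_\beta$ will emerge at the end by integrating the heat kernel in time. Write $\phi_s(y):=\bm{p}_{t-s}(x-y)Z(s,y)$ and $g(s,y):=(\phi_s*\tilde h)(y)$, so that by \eqref{eta^h} one has $(\bm{p}\circledast Z\eta^{(h)})(t\,,x)=M_t$, where $r\mapsto M_r:=\int_{(0,r)\times\R^d}g(s,y)\,\xi(\d s\,\d y)$ is, once shown to be well defined, a continuous $L^2(\Omega)$ martingale on $[0\,,t]$ with $\langle M\rangle_t=\int_0^t\d s\int_{\R^d}|g(s,y)|^2\,\d y$. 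First I would dispose of well-definedness: since $|g(s,y)|\le(|\phi_s|*|\tilde h|)(y)$ pointwise, the identity $\int_{\R^d}|\tilde h(y-w)||\tilde h(y-w')|\,\d y=(|h|*|\tilde h|)(w-w')$ and Tonelli give $\E\langle M\rangle_t\le\tfrac12\mathcal{N}_{\beta,2}(Z)^2\e^{2\beta t}\int_{\R^d}\bm{v}_\beta(x)(|h|*|\tilde h|)(x)\,\d x$ (the deterministic double integral being computed just as below), which is finite by Lemma \ref{lem:Dalang:Lp} since $\mathcal{N}_{\beta,2}(Z)\le\mathcal{N}_{\beta,k}(Z)<\infty$; this is Walsh's integrability condition, so $M$ and hence $\bm{p}\circledast Z\eta^{(h)}$ are well defined.

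For the sharp norm bound, BDG yields $\|M_t\|_k^2\le z_k^2\|\langle M\rangle_t\|_{k/2}$, so it suffices to estimate $\|\langle M\rangle_t\|_{k/2}$. Here I would not use the pointwise bound on $|g|$ --- that would replace $f$ by the larger $|h|*|\tilde h|$ and weaken the constant --- but instead the exact identity
\[
	\int_{\R^d}|g(s,y)|^2\,\d y=\int_{\R^d}\!\int_{\R^d}\bm{p}_{t-s}(x-w)\bm{p}_{t-s}(x-w')Z(s,w)Z(s,w')f(w-w')\,\d w\,\d w',
\]
which follows from $\int_{\R^d}\tilde h(y-w)\tilde h(y-w')\,\d y=(h*\tilde h)(w-w')=f(w-w')$ and Fubini. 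Since $k/2\ge1$, Minkowski's integral inequality applied to this (signed) triple integral, followed by $\|Z(s,w)Z(s,w')\|_{k/2}\le\|Z(s,w)\|_k\|Z(s,w')\|_k\le\e^{2\beta s}\mathcal{N}_{\beta,k}(Z)^2$ (Cauchy--Schwarz in $L^{k/2}(\Omega)$ and \eqref{N}), gives
\[
	\|\langle M\rangle_t\|_{k/2}\le\mathcal{N}_{\beta,k}(Z)^2\int_0^t\e^{2\beta s}\,\d s\int_{\R^d}\!\int_{\R^d}\bm{p}_{t-s}(x-w)\bm{p}_{t-s}(x-w')\,|f(w-w')|\,\d w\,\d w'.
\]
By translation invariance of Lebesgue measure and the semigroup property $\bm{p}_\tau*\bm{p}_\tau=\bm{p}_{2\tau}$, the inner double integral equals $\int_{\R^d}\bm{p}_{2(t-s)}(v)|f(v)|\,\d v$; substituting $s\mapsto t-s$, applying Tonelli, and using $\tfrac12\int_0^\infty\e^{-\beta u}\bm{p}_u(v)\,\d u=\tfrac12\bm{v}_\beta(v)$ from \eqref{HRD}, we obtain $\|\langle M\rangle_t\|_{k/2}\le\tfrac12\mathcal{N}_{\beta,k}(Z)^2\e^{2\beta t}\int_{\R^d}\bm{v}_\beta(v)|f(v)|\,\d v$. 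Therefore $\e^{-\beta t}\|M_t\|_k\le z_k\mathcal{N}_{\beta,k}(Z)\sqrt{\tfrac12\int_{\R^d}\bm{v}_\beta|f|}$, and taking the supremum over $t\ge0$ and $x\in\R^d$ produces the asserted inequality. The square-root integral is finite because $|f(x)|=|(h*\tilde h)(x)|\le(|h|*|\tilde h|)(x)$ pointwise and $\int_{\R^d}\bm{v}_\beta(x)(|h|*|\tilde h|)(x)\,\d x<\infty$ by Lemma \ref{lem:Dalang:Lp}.

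The main obstacle is a rigor issue rather than a conceptual one: handling the \emph{signed} correlation $f$ forces the identity-then-triangle-inequality route above (to keep the optimal weight $|f|$ rather than $|h|*|\tilde h|$), and that route relies on interchanging several integrals whose integrands are only locally, not absolutely, integrable for a general $h\in\mathcal{G}_p(\R^d)$. I would make this rigorous in the standard way: prove the displayed identity and the ensuing estimate first for $h$ and $Z$ that are bounded, smooth and compactly supported --- where Fubini applies with no fuss --- and then remove the restriction by a density argument, which is legitimate precisely because the final constant depends on $h$ only through $\int_{\R^d}\bm{v}_\beta|f|$ and on $Z$ only through $\mathcal{N}_{\beta,k}(Z)$.
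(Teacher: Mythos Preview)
Your proof is correct and follows essentially the same route as the paper's: Burkholder--Davis--Gundy with constant $z_k$, then Minkowski and Cauchy--Schwarz to pull out $\|Z(s,w)\|_k\|Z(s,w')\|_k\le\e^{2\beta s}\mathcal{N}_{\beta,k}(Z)^2$, then the semigroup identity $\bm p_\tau*\bm p_\tau=\bm p_{2\tau}$ and the time change that produces $\tfrac12\bm v_\beta$, with finiteness supplied by Lemma~\ref{lem:Dalang:Lp}. The paper does not separate the well-definedness check from the main estimate nor discuss a density argument for the Fubini step, but these are presentational rather than substantive differences.
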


\begin{proof}
	The integral under the square root is finite thanks to Lemma \ref{lem:Dalang:Lp}.
	We proceed to prove the remainder of the lemma.
	
	According to the theory of Walsh \cite{Walsh}, the random field
	$\bm{p}\circledast Z\eta^{( h )}$ is
	well defined whenever $\mathcal{Q}_2(t\,,x)<\infty$ where
	\[
		\mathcal{Q}_\kappa(t\,,x):=
		\int_0^t\d s\int_{\R^d}\d y\int_{\R^d}\d z\ \bm{p}_{t-s}(x-y)
		\bm{p}_{t-s}(x-z)
		\|Z(s\,,y)\|_k\|Z(s\,,z)\|_k| f(y-z) |
	\]
	for every $t>0$ and $x\in\R^d$.
	Moreover (see also \eqref{SID}), in that case, the Burkholder--Davis--Gundy
	inequality yields
	\begin{align*}
		&\E\left(\left|  \left(
			\bm{p}\circledast Z\eta^{( h )}\right)(t\,,x)\right|^k\right)\\
		&\le z_k^k\E\left(\left|
			\int_0^t\d s\int_{\R^d}\d y
			\int_{\R^d}\d z\ \bm{p}_{t-s}(x-y)\bm{p}_{t-s}(x-z)
			Z(s\,,y)Z(s\,,z) f(y-z)\right|^{k/2}\right)\\
		&\le z_k^k\left[ \int_0^t\d s\int_{\R^d}\d y
			\int_{\R^d}\d z\ \bm{p}_{t-s}(x-y)\bm{p}_{t-s}(x-z)
			\|Z(s\,,y)Z(s\,,z)\|_{k/2} |f(y-z)|\right]^{k/2}\\
		&\le z_k^k\left[ \mathcal{Q}_\kappa(t\,,x) \right]^{k/2},
	\end{align*}
	the last line holding thanks to the Cauchy--Schwarz inequality.
	It remains to prove that $\mathcal{Q}_\kappa(t\,,x)<\infty$ for all $t>0$ and $x\in\R^d$.
	
	Since $\|Z(s\,,y)\|_k\le \exp(\beta s)\mathcal{N}_{\beta,k}(Z)$ for all $s\ge0$ and $y\in\R^d$,
	it then follows that
	\begin{align*}
		\mathcal{Q}_\kappa(t\,,x)&\le \left[\mathcal{N}_{\beta,k}(Z)\right]^2
			\int_0^t\e^{-2\beta s}\,\d s\int_{\R^d}\d y\int_{\R^d}\d z\
			\bm{p}_{t-s}(x-y)\bm{p}_{t-s}(x-z)
			\left| f(y-z)\right|\\
		&\le \e^{2\beta t} \left[\mathcal{N}_{\beta,k}(Z)\right]^2
			\int_0^t\e^{-2\beta r}\,\d r \int_{\R^d}\d w\ \bm{p}_{2r}(w)
			|f(w)|,
	\end{align*}
	after two change of variables $[w=y-z,\ r=t-s]$, and
	thanks to the Chapman-Kolmogorov (semigroup)
	property of the heat kernel $\bm{p}$. Since
	\[
		\int_0^t \exp(-2\beta r)\bm{p}_{2r}(w)\,\d r\le\int_0^\infty
		\exp(-2\beta r)\bm{p}_{2r}(w)\,\d r=\tfrac12 \bm{v}_\beta(w),
	\]
	for every $w\in\R^d$ and $\beta>0$, this proves that
	\[
		\e^{-2\beta t}\mathcal{Q}_\kappa(t\,,x)
		\le \tfrac12\left[\mathcal{N}_{\beta,k}(Z)\right]^2
		\int_{\R^d} \bm{v}_\beta(w)
		|f(w)|.
	\]
	This inequality completes the proof of the lemma upon taking square roots,
	as the right-hand side of the preceding inequality is independent of $(t\,,x)$.
\end{proof}

\subsection{Well posedness}\label{sec:wellposedness}

Before we study the spatial ergodicity of the solution to \eqref{SHE} we address matters
of well posedness. As was mentioned earlier, well-posedness follows from the more general
theory of Dalang \cite{Dalang1999} when $h\ge0$, for example.
Here we say a few things about general
well posedness when $h$ is signed. This undertaking does require some new ideas, but most
of those new ideas have already been developed in the earlier sections, particularly as regards
the space $\cup_{p>1}\mathcal{G}_p(\R^d)$, which now plays a prominent role.

Recall the $\lambda$-potential $\bm{v}_\lambda$ from \eqref{HRD}.
Choose and fix a function $h\in\cup_{p>1}\mathcal{G}_p(\R^d)$
and recall from Lemma \ref{lem:Dalang:Lp}
that
\[
	\int_{\R^d}\bm{v}_\lambda(x)|f(x)|\,\d x\le
	\int_{\R^d}\bm{v}_\lambda(x)\left( |h|*|\tilde{h}|\right)(x)\,\d x<\infty,
\]
for one, hence all,
$\lambda>0$. As a consequence, we find that the following is a well-defined,
$(0\,,\infty)$-valued function on $(0\,,\infty)$:
\begin{equation}\label{Lambda}
	\Lambda_h(\delta) := \inf\left\{ \lambda>0:\ \int_{\R^d} \bm{v}_\lambda(x)
	\left( |h|*|\tilde{h}|\right)(x)\,\d x<
	\delta\right\}\qquad\text{for all $\delta>0$},
\end{equation}
where $\inf\varnothing:=\infty$.

\begin{theorem}\label{th:exist}
	Assume that $f = h* \tilde{h}$ with $h\in\mathcal{G}_p(\R^d)$ for some $p>1$.
	Then, the SPDE \eqref{SHE}, subject to non-random initial data $u(0)=u_0\in L^\infty(\R^d)$
	and non degeneracy condition $\text{\rm Lip}(\sigma)>0$,
	has a mild solution $u$ which is unique (upto a modification)
	subject to the additional condition \eqref{cond:moment}
	Finally, $(0,\,\infty)\times\R^d\ni(t\,,x)\mapsto u(t\,,x)$
	is continuous in $L^k(\Omega)$ for very $k\ge2$,
	and hence Lebesgue measurable (upto evanescence).
\end{theorem}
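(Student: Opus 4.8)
The plan is to establish Theorem~\ref{th:exist} by the classical Picard iteration scheme, but carried out on the weighted Banach spaces $(\mathbb{W}_{\beta,k},\mathcal{N}_{\beta,k})$ of \S\ref{sec:SC}, with the stochastic Young inequality (Lemma~\ref{lem:Young}) supplying the contraction estimate. First I would write the candidate solution in mild (integral) form,
\[
	u(t\,,x) = (\bm{p}_t u_0)(x) + \left(\bm{p}\circledast \sigma(u)\eta^{(h)}\right)(t\,,x),
\]
where $\bm{p}_t u_0$ denotes the (deterministic, bounded, since $u_0\in L^\infty(\R^d)$) solution of the heat equation with data $u_0$, and the stochastic convolution is the Walsh integral \eqref{SC} with $Z=\sigma(u)$. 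Define the Picard iterates $u_0(t\,,x):=(\bm{p}_t u_0)(x)$ and $u_{n+1}:=(\bm{p}_t u_0) + \bm{p}\circledast\sigma(u_n)\eta^{(h)}$, and check inductively that each $u_n$ is predictable and lies in $\mathbb{W}_{\beta,k}$: Lipschitz continuity of $\sigma$ gives $\|\sigma(u_n)(s\,,y)\|_k \le |\sigma(0)| + \text{\rm Lip}(\sigma)\|u_n(s\,,y)\|_k$, so $\mathcal{N}_{\beta,k}(\sigma(u_n)) \le c(1+\mathcal{N}_{\beta,k}(u_n))$ once $\beta$ is large enough that $\sup_{t}\e^{-\beta t}\|(\bm{p}_tu_0)(\cdot)\|_k<\infty$, and then Lemma~\ref{lem:Young} bounds $\mathcal{N}_{\beta,k}(u_{n+1})$.

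Next I would prove the contraction. For any $k\ge2$, Lemma~\ref{lem:Young} applied to $Z=\sigma(u_n)-\sigma(u_{n-1})$ and the Lipschitz bound $\|\sigma(u_n)(s\,,y)-\sigma(u_{n-1})(s\,,y)\|_k\le\text{\rm Lip}(\sigma)\|u_n(s\,,y)-u_{n-1}(s\,,y)\|_k$ give
\[
	\mathcal{N}_{\beta,k}(u_{n+1}-u_n) \le z_k\,\text{\rm Lip}(\sigma)\sqrt{\tfrac12\int_{\R^d}\bm{v}_\beta(x)|f(x)|\,\d x}\;\mathcal{N}_{\beta,k}(u_n-u_{n-1}).
\]
Since $\int_{\R^d}\bm{v}_\beta(x)|f(x)|\,\d x\le\int_{\R^d}\bm{v}_\beta(x)(|h|*|\tilde h|)(x)\,\d x\downarrow0$ as $\beta\to\infty$ (by Lemma~\ref{lem:Dalang:Lp} and monotone convergence, since $\bm{v}_\beta\downarrow0$ pointwise), one can choose $\beta=\beta(k)$ so large that the prefactor is $\le 1/2$; concretely, any $\beta>\Lambda_h(\delta)$ with $\delta := 1/(2 z_k\text{\rm Lip}(\sigma))^2$ works, using the notation \eqref{Lambda}. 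Hence $(u_n)$ is Cauchy in $(\mathbb{W}_{\beta,k},\mathcal{N}_{\beta,k})$ and converges to a limit $u\in\mathbb{W}_{\beta,k}$; passing to the limit in the Picard recursion (the stochastic-convolution map is continuous on $\mathbb{W}_{\beta,k}$ by the same Lemma~\ref{lem:Young} estimate) shows $u$ solves the mild equation. Because the $\mathcal{N}_{\beta,k}$-norm controls $\sup_{t\in[0,T]}\sup_x\|u(t\,,x)\|_k$ for each fixed $T$, the solution satisfies \eqref{cond:moment}.

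For uniqueness, suppose $u,\bar u$ are two mild solutions obeying \eqref{cond:moment}; on any finite interval $[0\,,T]$ they lie in $\mathbb{W}_{\beta,k}$ for every $\beta>0$, and the same Lemma~\ref{lem:Young} contraction estimate with the difference $u-\bar u$ gives $\mathcal{N}_{\beta,k}(u-\bar u)\le\tfrac12\mathcal{N}_{\beta,k}(u-\bar u)$ once $\beta$ is large, forcing $u=\bar u$ as modifications. Finally, $L^k(\Omega)$-continuity of $(t\,,x)\mapsto u(t\,,x)$ on $(0\,,\infty)\times\R^d$ follows by a now-standard argument: write the increment $u(t\,,x)-u(t'\,,x')$ using the mild formula, split into the deterministic heat part (continuous since $u_0\in L^\infty$) plus differences of stochastic convolutions, and estimate the latter by the Burkholder--Davis--Gundy bound together with dominated convergence on the integral $\int_0^t\!\d s\int\!\!\int \bm{p}_{t-s}(x-y)\bm{p}_{t-s}(x'-z)|f(y-z)|\,\d y\,\d z$, using that this double integral is dominated by a $\bm{v}_\beta$-type quantity which is finite by Lemma~\ref{lem:Dalang:Lp}; joint measurability then follows from stochastic continuity (e.g.\ via \cite[Remark~\ref{rem:doob}]{}, or by a standard approximation) upto evanescence. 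The main obstacle is not the iteration itself --- that is routine once the function spaces are fixed --- but ensuring throughout that the possibly-signed $f$ does not spoil the Walsh integrability; this is exactly where Lemmas~\ref{lem:Young} and~\ref{lem:Dalang:Lp} (hence the hypothesis $h\in\cup_{p>1}\mathcal{G}_p(\R^d)$) do the real work, by replacing $f$ with $|h|*|\tilde h|$ everywhere an absolute-convergence check is needed.
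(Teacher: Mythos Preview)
Your proposal is correct and follows essentially the same approach as the paper: Picard iteration in the weighted spaces $(\mathbb{W}_{\beta,k},\mathcal{N}_{\beta,k})$, with the stochastic Young inequality (Lemma~\ref{lem:Young}) supplying both the uniform bound on the iterates and the contraction estimate, and $\beta$ chosen large via $\Lambda_h$ so that the contraction constant is strictly less than one. The only cosmetic differences are your choice of initial iterate $u_0(t,x)=(\bm{p}_tu_0)(x)$ rather than the constant $u_0(x)$, and your slightly more explicit sketch of the $L^k(\Omega)$-continuity argument, which the paper abbreviates.
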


\begin{proof}[Outline of the proof of Theorem \ref{th:exist}]

	The proof follows a standard route. We therefore outline it,
	in part to document the veracity of the argument, but
	mainly as a means of introducing objects that we will need later on.
	
	Let $u_0(t\,,x):=u_0(x)$ for all $t\ge0$ and $x\in\R^d$, and define
	iteratively
	\begin{align}
		u_{n+1}(t\,,x) :=&\ \int_{\R^d}\bm{p}_t(y-x)u_0(y)\,\d y +
			\int_{(0,t)\times\R^d}\bm{p}_{t-s}(x-y)\sigma(u_n(s\,,y))  \notag
			\,\eta^{( h )}(\d s\,\d y)\\
		=&\ (\bm{p}_t*u_0)(t) + \left( \bm{p}\circledast  \label{dn1}
			\sigma(u_n)\eta^{( h )}\right)(t\,,x),
	\end{align}
	for every integer $n\ge0$ and all real numbers $t\ge0$ and $x\in\R^d$.
	Then, $u_0,u_1,\ldots$ represent the successive approximations of $u$ via Picard iteration.
	Since the first term is bounded uniformly by $\|u_0\|_{L^\infty(\R^d)}$,
	and since every $\mathcal{N}_{\beta,k}$ is a norm for every $\beta>0$
	and $k\ge1$,
	it follows that for all integers $n\ge0$, and all reals $\beta>0$ and $k\ge2$,
	\begin{equation}\label{N(u)}\begin{split}
		\mathcal{N}_{\beta,k}(u_{n+1}) &\le \|u_0\|_{L^\infty(\R^d)} +
			\mathcal{N}_{\beta,k}\left( \bm{p}\circledast
			\sigma(u_n)\eta^{( h )}\right)\\
		&\le\|u_0\|_{L^\infty(\R^d)} + z_k
			\mathcal{N}_{\beta,k}\left( \sigma(u_n)\right)\sqrt{\frac12\int_{\R^d}
			\bm{v}_\beta(x) |f(x)|\,\d x};
	\end{split}\end{equation}
	see Lemma \ref{lem:Young}.
	Because $|\sigma(z)|\le|\sigma(0)|+\text{\rm Lip}(\sigma)|z|$
	for all $z\in\R$, it follows that
	\[
		\mathcal{N}_{\beta,k}(u_{n+1}) \le\|u_0\|_{L^\infty(\R^d)} + z_k
		\left(|\sigma(0)|+\text{\rm Lip}(\sigma)\mathcal{N}_{\beta,k}
		( u_n)\right)
		\sqrt{\frac12\int_{\R^d}
		\bm{v}_\beta(x) \left( |h|*|\tilde{h}|\right)(x)\,\d x}.
	\]
	This is valid for every $\beta>0$ and $k\ge2$.

    Choose and fix $\varepsilon \in (0, 1)$.
    Because
	\begin{equation}\label{E:Lambda-Beta}
		 \beta\ge \Lambda_h
		 \left(\frac{2(1-\varepsilon)^2}{[z_k\text{\rm Lip}(\sigma)]^2}\right)
		 \quad\text{iff}\quad
		 \int_{\R^d}\bm{v}_\beta(x) \left( |h|*|\tilde{h}|\right)(x)\,\d x
		 \le \frac{2(1-\varepsilon)^2}{[z_k\text{\rm Lip}(\sigma)]^2},
	\end{equation}
	it follows that,
	under the condition $\beta \ge \Lambda_h(2(1-\varepsilon)^2/[z_k\text{\rm Lip}(\sigma)]^2)$,
	\begin{equation}\label{eq:N(u_n+1)}\begin{split}
		\mathcal{N}_{\beta,k}(u_{n+1}) &\le\|u_0\|_{L^\infty(\R^d)} + z_k
			|\sigma(0)|\sqrt{\frac12\int_{\R^d}
			\bm{v}_\beta(x) \left( |h|*|\tilde{h}|\right)\,\d x} + (1-\varepsilon)
			\mathcal{N}_{\beta,k}(u_n)\\
		&\le \|u_0\|_{L^\infty(\R^d)} +
			\frac{|\sigma(0)|}{\text{\rm Lip}(\sigma)} +
			(1-\varepsilon)\mathcal{N}_{\beta,k}(u_n)\\
		&\le \|u_0\|_{L^\infty(\R^d)} +
			\frac{|\sigma(0)|}{\text{\rm Lip}(\sigma)} + (1-\varepsilon)\left[
			\|u_0\|_{L^\infty(\R^d)} +
			\frac{|\sigma(0)|}{\text{\rm Lip}(\sigma)}\right] + (1-\varepsilon)^2
			\mathcal{N}_{\beta,k}(u_{n-1})\\
		&\le\cdots\le \left[\|u_0\|_{L^\infty(\R^d)} +
			\frac{|\sigma(0)|}{\text{\rm Lip}(\sigma)}\right]\cdot
			\left[\sum_{j=0}^n(1-\varepsilon)^j +
			(1-\varepsilon)^{n+1}\|u_0\|_{L^\infty(\R^d)}\right]\\
		&\le \left[ \|u_0\|_{L^\infty(\R^d)} +
			\frac{|\sigma(0)|}{\text{\rm Lip}(\sigma)}\right]\cdot
			\left[\frac1\varepsilon +
			(1-\varepsilon)^{n+1}\|u_0\|_{L^\infty(\R^d)}
			\right],
	\end{split}\end{equation}
	after iteration. Similarly, one finds that
	\begin{equation}\label{u_n-u}\begin{split}
		\mathcal{N}_{\beta,k}(u_{n+1} - u_n) &\le
			\mathcal{N}_{\beta,k}\left( \bm{p}\circledast
			\left[ \sigma(u_n)-\sigma(u_{n-1})\right]\eta^{( h )}\right)\\
		&\le z_k\mathcal{N}_{\beta,k}\left( \sigma(u_n)-\sigma(u_{n-1})\right)
			\sqrt{\frac12\int_{\R^d}
			\bm{v}_\beta(x) \left( |h|*|\tilde{h}|\right)\,\d x}\\
		&\le z_k\text{\rm Lip}(\sigma)\mathcal{N}_{\beta,k}
			\left( u_n-u_{n-1}\right)\sqrt{\frac12\int_{\R^d}
			\bm{v}_\beta(x) \left( |h|*|\tilde{h}|\right)\,\d x}\\
		&\le(1-\varepsilon)\mathcal{N}_{\beta,k}\left( u_n-u_{n-1}\right),
	\end{split}\end{equation}
	provided still that
	$\beta \ge \Lambda_h(2(1-\varepsilon)^2/[z_k\text{\rm Lip}(\sigma)]^2)$.
	It follows immediately that $\{u_n\}_{n\ge0}$ is a Cauchy sequence in $\mathbb{W}_{\beta, k}$
	when $\beta\ge\Lambda_h(2(1-\varepsilon)^2/[z_k\text{\rm Lip}(\sigma)]^2)$. It also implies readily
	that  $u:=\lim_{n\to\infty}u_n$ is an element of $\mathbb{W}_{\beta, k}$, for the same range
	of $\beta$'s, and that $u$ solves \eqref{SHE}. This and Fatou's lemma together prove the asserted
	upper bound for $\E(|u(t\,,x)|^k)$ as well.
	
	The proof of uniqueness is also essentially standard: Suppose there existed
	$u,v\in\mathbb{W}_{\beta, k}$ for some
	$\beta \ge \Lambda_h(2(1-\varepsilon)^2/[z_k\text{\rm Lip}(\sigma)]^2)$
	both of which are mild solutions to \eqref{SHE}. Then, the same argument that led to
	\eqref{u_n-u} yields
	\[
		\mathcal{N}_{\beta,k,T}(u-v)\le(1-\varepsilon)\mathcal{N}_{\beta,k,T}(u-v),
	\]
	for all $\beta \ge \Lambda_h(2(1-\varepsilon)^2/[z_k\text{\rm Lip}(\sigma)]^2)$
	and $T>0$, where
	\[
		\mathcal{N}_{\beta,k,T}(\Phi) := \sup_{t\in[0,T]}\sup_{x\in\R^d}
		\e^{-\beta t}\|\Phi(t\,,x)\|_k;
	\]
	compare with \eqref{N}.
	In particular, it follows that there exists $\beta>0$ such that
	\[
		\mathcal{N}_{\beta,k,T}(u-v)=0 \qquad\text{for all $T>0$},
	\]
	and hence $u$ and $v$ are modifications of one another. We can unscramble the latter
	displayed statement in order to see that this yields the asserted bound for
	$\E(|u(t\,,x)|^k)$. Similarly, one proves $L^k(\Omega)$ continuity, which completes our (somewhat
	abbreviated) proof of Theorem \ref{th:exist}.
\end{proof}

\begin{remark}\label{Picard_iteration}
	Let us pause and record the following --- see \cite[eq.\ (54)]{Dalang1999} --
	ready by-product of Theorem \ref{th:exist} and the Lipschitz continuity of $\sigma$:
	For all $T >0$ and $k \geq 2$,
	\begin{align}\label{C_{T, k}}
		C_{T, k}:=\sup_{n\ge0}\sup_{(t\,,x)\in[0, T]\times\R^d}\E\left(|
		\sigma(u_n(t\,,x))|^k\right) < \infty,
	\end{align}
	where $u_n$ denotes the $n$th-stage Picard iteration of the SPDE \eqref{SHE}. Eq.\
	\eqref{C_{T, k}} is  valid also
	in the case that $f$ satisfies \eqref{f=h*h}; see \eqref{eq:N(u_n+1)} for some
	$h\in\cup_{p>1}\mathcal{G}_p(\R^d)$.
\end{remark}

\begin{remark}\label{rem:doob}
	Because of $L^k(\Omega)$-continuity, Doob's theory of separability becomes applicable (see
	Doob \cite{Doob})
	and implies, among other things, that $x\mapsto u(t\,,x)$ is Lebesgue measurable.
	This is of course directly relevant to the present discussion of spatial ergodicity.
\end{remark}

\section{Malliavin calculus}\label{sec:Malliavin}
\subsection{A Clark--Ocone formula and a Poincar\'e inequality}

Suppose that  the spatial correlation $f$ of the noise is a measure
that satisfies Dalang's condition
\eqref{Dalang}, or  is a function of the form $f= h* \tilde{h}$ where $|h| \in H_{-1} (\R^d)$.
Let $\HH_0$ be the Hilbert space defined as the completion of
$C_c^\infty(\R^d)$ under the scalar product
\[
	\langle \phi\,, \varphi \rangle_{\HH_0}
	= \langle \phi\,, \varphi*f\rangle_{L^2(\R^d)},
\]
and let $\HH := L^2(\R_+\,; \HH_0)$. Then, the Gaussian family $\{\eta(\phi)\}_{\phi\in \HH}$,
described by the family of Walsh-type stochastic integrals,
\[
	\eta(\phi)= \int_{\R_+\times\R^d}  \phi(s\,,x)\, \eta(\d s\, \d x),
\]
defines an isonormal Gaussian process on the Hilbert space $\HH$.
When  $f= h* \tilde{h}$, we can
use the noise $\eta^{(h)}$ to construct this integral from the integral
with respect to a space-time white noise as it has been done in  \S\ref{sec:SC}.

  In this framework, we can develop the Malliavin calculus with respect to the noise $\eta$. Next we recall some of the basic definitions of that Malliavin calculus.

Denote by $\mathcal {S}$ be the set of smooth and cylindrical random variables of the form
\[
	F= \Psi(\eta(\phi_1)\,, \dots, \eta(\phi_n)),
\]
where $\Psi\in C_c^\infty(\mathbb{R}^n)$ and
$\phi=(\phi_1\,,\ldots,\phi_n) \in \HH^n$.
If $F\in \mathcal{S}$ has the above form, then the
\emph{Malliavin derivative} $DF$ is the $\HH$-valued random variable defined by
\[
 	DF := (\nabla\Psi)\left( \eta(\phi_1)\,,\ldots,\eta(\phi_n)\right)\cdot
	\phi = \sum_{i=1}^n \left( \partial_i \Psi \right)
	\left(\eta(\phi_1) \dots, \eta(\phi_n) \right) \phi_i.
\]
In particular, $D(\eta(\varphi))= \varphi$ or every $\varphi\in\HH$; that is, $D$ can be interpreted as
the inverse of the Wiener stochastic-integral operator $\phi\mapsto \eta(\phi)$.

The operator $D$ is a closable linear mapping
from $L^p(\Omega)$ to $L^p(\Omega\,; \HH)$ for every real number $p\ge 1$.
We can define the Gaussian Sobolev space $\mathbb{D}^{1,p} $
as the closure of $\mathcal{S}$ with respect to the seminorm $\|\cdots\|_{1,p}$,
defined via
\[
	\|F\|_{1,p}^p := \E\left(|F|^p\right) +
	\E\left(  \|DF\|_{\HH}^p  \right).
\]
We will make use of the notation $D_{s,z}F$ to represent  the derivative as a random field,
indexed by $(s\,,z)\in\R_+\times\R^d$. In particular, if $F=u(t\,,x)$, then
$D_{r,z}u(t\,,x)$ will serve as short-hand for $D_{r,z}[ u(t\,,x)]$.

The \emph{divergence operator} $\delta $ is defined as the adjoint of $D$.
More precisely,  we first define the \emph{domain of  $\delta$}  in $L^2(\Omega)$ ---
denoted by ${\rm Dom}\, \delta$ ---  as the set of random
elements $v\in  L^2(\Omega\,  ;  \HH  ) $ for which we can find a real number $c_v>0$
such that
\[
	\left| \E \left( \langle v\,,DF \rangle_{\HH} \right) \right| \le c_v \| F\|_{L^2(\Omega)}
	\qquad\text{for every $F\in\mathbb{D}^{1,2}$.}
\]

For every $v\in{\rm Dom}\, \delta$, we define the real-valued
random variable $\delta(v)$ via the following \emph{duality relation}:
\begin{equation}\label{duality}
	\E\left[\langle DF\,, v\rangle_{\HH}  \right] = \E[F\delta(v)]
	\qquad\text{for every $F\in\mathbb{D}^{1,2}.$}
\end{equation}

It turns out that $\delta$ is a closed operator.
This means that, if $v_1,v_2,\ldots \in {\rm Dom}\, \delta$ satisfy $\lim_{n\to\infty}v_n=v$
in $L^2(\Omega \, ;  \HH  )$, and if
$G:=\lim_{n\to\infty}\delta(v_n)$ exists in $L^2(\Omega)$, then
$v\in {\rm Dom}\, \delta$, and $\delta(v)=G$.

Next, we provide 2 examples of elements of the domain of the divergence operator $\delta$.

\begin{example}
	Suppose that $v\in L^2(\Omega \times \R_+\,; \HH_0)$ is a
	smooth and cylindrical  $\HH_0$-valued stochastic process of the form
	$v(t)=\sum_{j=1}^{n}F_{j} \phi _{j}(t)$
	where the $F_{j}\in \mathcal{S}$, and $\phi_{j}\in \HH_0$
	for all $j=1,\ldots,n$.
	Then, $v\in{\rm Dom}\,\delta$, and
	\[
		\delta (v)=\sum_{j=1}^{n}F_{j}\eta(\phi_{j})-\sum_{j=1}^{n}\left\langle
		DF_{j}\,,\phi_{j}\right\rangle _{\HH}.
	\]
	This property follows immediately from \eqref{duality}.
\end{example}

\begin{example}
	Consider a predictable random field  $\{H(s\,,y)\}_{s\ge 0, y\in \R^d}$
	such that the Walsh integral $\int_{\R_+\times\R^d}H\,\d\eta$ is well defined
	and in $L^2(\Omega)$.
	  Then, $H\in{\rm Dom}\,\delta$ as an $\HH_0$-valued process,
	  and $\delta(H)$ coincides with the Walsh stochastic integral of $H$; that is,
	  $\delta(H)= \int_{\R_+\times \R^d} H\,\d\eta.$
	  This result is well-known in the case of stochastic integrals with
	  respect to the Brownian motion (see \cite{GT82} and also \cite[Proposition 1.3.11]{Nualart}).
	  The same proof works for $\HH_0$-valued processes.
\end{example}

	The Clark--Ocone formula will play a fundamental role in the proof of
	our results. We state below this formula and give a proof for the sake of completeness.
	Throughout, we denoted by $\mathcal{F}:=\{\mathcal{F}_t\}_{t\ge0}$
	the natural filtration of the noise $\eta$;
	that is, $\mathcal{F}$ is the usual augmentation of the filtration $\mathcal{F}^0$, defined via
	$\mathcal{F}^0_0:=\{\varnothing\,,\Omega\}$ and
	\[
		\mathcal{F}_t^0 := \text{sigma-algebra generated by }
		\left\{ \int_{(0,r)\times\R^d}\phi(x)\,\eta(\d s\,\d x);\, 0\le r\le t\right\}
		\quad\text{for all $t>0$},
	\]
	as $\phi$ ranges over all elements of $C_c(\R^d)$; see also \eqref{Cov:eta}.

\begin{proposition}[A Clark--Ocone formula/Poincar\'e inequality]\label{Clark-Ocone}
	For every $F\in  \mathbb{D}^{1,2}$,
	\[
		F= \E F  + \int_{\R_+\times\R^d}
		\E\left(D_{s,z} F \mid \mathcal{F}_s\right) \eta(\d s \, \d z)
		\qquad\text{a.s.}
	\]
	Consequently, we have the Poincar\'e inequality,
	$\Var(F) \le \E( \|DF\|_{\HH}^2).$
\end{proposition}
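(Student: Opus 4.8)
The plan is to deduce the Clark--Ocone identity from the martingale representation theorem for the noise together with the duality relation \eqref{duality}, and then to read off the Poincar\'e inequality in one line from the It\^o isometry and Jensen's inequality. The one genuinely nontrivial input is the \emph{martingale representation theorem}: every $G\in L^2(\Omega\,,\mathcal{F}_\infty)$ can be written uniquely as $G=\E G+\int_{\R_+\times\R^d}v(s\,,z)\,\eta(\d s\,\d z)$ with $v$ a predictable element of $L^2(\Omega\,;\HH)$, the integral being understood as a Walsh integral (equivalently as $\delta(v)$, by the second Example above). I would obtain this either from the Wiener chaos decomposition of $L^2(\Omega\,,\mathcal{F}_\infty)$, representing each chaos by integration in its last time coordinate, or, more concretely, by fixing an orthonormal basis $\{e_k\}_{k\ge1}$ of $\HH_0$, observing that $W^k_t:=\eta(\bm{1}_{[0,t]}\otimes e_k)$ is a sequence of independent Brownian motions that jointly generate $\mathcal{F}$, and quoting the classical representation theorem for a countable family of independent Brownian motions; when $f=h*\tilde h$ one may instead reduce directly to the space-time white noise $\xi$ of \S\ref{sec:SC}.

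Given $F\in\mathbb{D}^{1,2}$, write $F=\E F+\delta(v_F)$ with $v_F$ predictable, and let $\Pi(DF)$ be a predictable version of $(s\,,z)\mapsto\E(D_{s,z}F\mid\mathcal{F}_s)$; such a version exists by the usual predictable-projection argument, and it lies in $L^2(\Omega\,;\HH)$ because conditional expectation is an $L^2$-contraction. To identify $v_F$ I would test against an arbitrary predictable $v\in L^2(\Omega\,;\HH)$. On one hand, since $\delta$ agrees with the Walsh integral on predictable integrands and $\E[\delta(v)]=0$, the It\^o isometry gives $\E[F\delta(v)]=\E\langle v_F\,,v\rangle_{\HH}$. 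On the other hand, the duality relation \eqref{duality} gives
\begin{align*}
	\E[F\delta(v)]&=\E\langle DF\,,v\rangle_{\HH}
	=\int_0^\infty\E\langle D_sF\,,v_s\rangle_{\HH_0}\,\d s\\
	&=\int_0^\infty\E\langle\E(D_sF\mid\mathcal{F}_s)\,,v_s\rangle_{\HH_0}\,\d s
	=\E\langle\Pi(DF)\,,v\rangle_{\HH},
\end{align*}
where the third equality uses that $v_s$ is $\mathcal{F}_s$-measurable. Since $v$ is arbitrary and both $v_F$ and $\Pi(DF)$ are predictable, $v_F=\Pi(DF)$ in $L^2(\Omega\,;\HH)$, which is precisely the asserted formula.

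The Poincar\'e inequality is then immediate: by the It\^o isometry for the Walsh integral and the conditional Jensen inequality for the convex function $\|\cdot\|_{\HH_0}^2$ on the Hilbert space $\HH_0$,
\begin{align*}
	\Var(F)=\E\big(|\delta(v_F)|^2\big)=\E\big(\|v_F\|_{\HH}^2\big)
	&=\int_0^\infty\E\big(\|\E(D_sF\mid\mathcal{F}_s)\|_{\HH_0}^2\big)\,\d s\\
	&\le\int_0^\infty\E\big(\|D_sF\|_{\HH_0}^2\big)\,\d s=\E\big(\|DF\|_{\HH}^2\big).
\end{align*}
I expect the martingale representation theorem, in the exact form needed here --- a predictable integrand in $L^2(\Omega\,;\HH)$, so that the It\^o isometry genuinely applies, and with the colored-in-space structure of $\HH_0$ handled correctly --- to be the main obstacle; the duality, the isometry, and conditional Jensen are all routine, and existence of the predictable projection is a standard measurability fact.

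As a fallback that avoids invoking representation as a black box, I would verify the identity directly on exponential vectors $F=\exp(\eta(\phi)-\tfrac12\|\phi\|_{\HH}^2)$, for which $D_{s,z}F=\phi(s\,,z)F$ while $\E(F\mid\mathcal{F}_t)$ is the explicit exponential martingale $\exp(\eta(\phi\bm{1}_{[0,t]})-\tfrac12\|\phi\bm{1}_{[0,t]}\|_{\HH}^2)$, so that the It\^o-type formula for this martingale exhibits the integrand as $\phi(s\,,z)\E(F\mid\mathcal{F}_s)=\E(D_{s,z}F\mid\mathcal{F}_s)$. One then passes to a general $F\in\mathbb{D}^{1,2}$ using that finite linear combinations of such $F$ are dense in $\mathbb{D}^{1,2}$, that $\E F_n\to\E F$ and $DF_n\to DF$ in $L^2$, that conditional expectation is an $L^2$-contraction, and that the It\^o isometry is an isometry.
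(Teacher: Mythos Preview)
Your argument is correct and matches the paper's proof essentially line for line: both invoke the martingale representation theorem to write $F=\E F+\int H\,\d\eta$ with $H$ predictable, then identify $H$ with $\E(D_{s,z}F\mid\mathcal{F}_s)$ by testing against an arbitrary predictable $v$ via the duality relation \eqref{duality} and the It\^o isometry, and both derive the Poincar\'e inequality from the isometry plus conditional Jensen. The paper cites \cite{LO73,Ou75} for the Hilbert-valued representation theorem, whereas you sketch alternative routes (chaos expansion, reduction to independent Brownian motions, or the space-time white noise of \S\ref{sec:SC}) and add an exponential-vector fallback; these are welcome elaborations but not a different strategy.
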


\begin{proof}
	It suffices to prove the integral representation of $F$; the
	Poincar\'e inequality follows from the integral representation
	by the same argument as in \eqref{CHL}, using the spatial
	covariance structure of $\eta$; see \eqref{Cov:eta}.
	
	One can extend  the martingale representation theorem,
	proved by Doob in \cite{Doob}, to martingales
	that take value in a Hilbert space \cite{LO73, Ou75}.
	It follows from that extension that there exists a unique
	$\HH_0$-valued predictable process $H$ such that the Walsh integral
	$\int_{\R_+\times\R^d}H\,\d\eta$ is well-defined in $L^2(\Omega)$,
	and
	\begin{equation}\label{Ito(F)}
		F= \E F  + \int_{\R_+\times\R^d} H(s\,,z) \,\eta(\d s \, \d z).
	\end{equation}
	It remains to prove that
	\begin{equation}\label{H=D}
		H(s\,,z)=\E( D_{s,z} F \mid \mathcal{F}_s),
	\end{equation}
	viewed as in identity in $L^2(\Omega \times \R_+; \HH_0)$.
	We will prove \eqref{H=D} in the case that $f$ is a nonnegative-definite  \emph{function}.
	The more general case where $f$ is a nonnegative distribution follows in exactly
	the same way, but one has to adjust the ensuing integrals for example so that $\int \psi(x)f(x)\,\d x$ is replaced
	by $f(\psi)$, etc. Proving the more general case requires no new ideas, only the introduction of
	heavy-handed notation. Therefore, we stick to the less notation-intensive
	case that $f$ is a function.
	
	Both sides of \eqref{H=D} define predictable random fields.
	Therefore, it suffices to prove that
	\begin{align*}
		&\int_0^\infty \d s \int_{\R^d} \d y \int_{\R^d} \d z \
			\E[ H(s\,,y) v(s\,,z)] f(y-z)\\
		&\hskip2in=
			\int_0^\infty \d s \int_{\R^d} \d y \int_{\R^d} \d z \
			\E[ \E(D_{s,y} F \mid \mathcal{F}_s)  v(s\,,z)] f(y-z),
	\end{align*}
	for every predictable process $v\in L^2(\Omega \times \R_+\,; \HH_0)$.
	Since $v(s\,,\cdot)$ is $\mathcal{F}_s$-measurable for every $s>0$, the duality relation \eqref{duality}
	implies that
	\begin{align*}
		&\int_0^\infty \d s \int_{\R^d} \d y \int_{\R^d} \d z \
			\E[ \E (D_{s,y} F \mid \mathcal{F}_s)  \cdot v(s\,,z)] f(y-z) \\
		&\hskip1in=  \int_0^\infty \d s \int_{\R^d} \d y \int_{\R^d} \d z \
			\E\left[ D_{s,y} F  \cdot v(s\,,z)\right] f(y-z)
			= \E  \left[ \langle DF, v \rangle_{\HH} \right] = \E[ F \delta(v)].
	\end{align*}
	Because $\delta(v) $ coincides with the Walsh integral of $v$,
	\eqref{Ito(F)} implies that
	\[
		\E[F\delta(v)] = \E\left[\int_{\R_+\times\R^d}H\,\d\eta\cdot
		\int_{\R_+\times\R^d}v\,\d\eta\right] =
		\int_0^\infty \d s \int_{\R^d} \d y \int_{\R^d} \d z \
		\E[ H(s\,,y) v(s\,,z)]f(y-z),
	\]
	thanks to the $L^2(\Omega)$-isometry  of the Walsh stochastic integral.
	This concludes the proof.
\end{proof}

\subsection{Differentiability of the solution}\label{sec:diff}
In order to apply Malliavin calculus, we first need to check that the solution
to \eqref{SHE} is differentiable in the sense of Malliavin calculus.
This section is concerned with that, which we state next in the following
comprehensive form.

\begin{theorem}\label{malliavin_derivative}
	Suppose $f$ satisfies either Dalang's condition \eqref{Dalang}, or
	condition \eqref{f=h*h} with $h\in\cup_{p>1}\mathcal{G}_p(\R^d)$,
	and let $u$ denote the mild solution to \eqref{SHE}. Then,
	\[
		u(t\,,x)\in\bigcap_{k\ge2}\mathbb{D}^{1,k}
		\qquad\text{for  every $(t\,,x) \in (0\,,\infty)\times \R^d$.}
	\]
	Moreover, if $t\in(0\,,T)$ for a fixed $T>0$ and $x\in\R^d$, then
	\begin{equation}\label{L^p_norm}
		\| D_{s,y} u(t\,,x) \|_{k} \le \frac{2C_{T,k}\e^{\lambda_0(t -s)}}{\sqrt{%
		1 - 2^{(d-2)/2} \left[z_k{\rm Lip} (\sigma)\right]^2
		(\bm{v}_{\lambda_0}*\bar{\bf f})(0)}}
		\,\bm{p}_{t-s}(x-y),
	\end{equation}
	for almost every $(s\,,y) \in (0\,,t) \times \R^d$.
	The quantities $C_{T,k}$, $z_k$, and $\bm{v}_{\lambda_0}$ are respectively
	defined in \eqref{C_{T, k}}, \eqref{z_k}, and \eqref{HRD},
	$\lambda_0>0$ is arbitrary but large enough to ensure that
	\[
		( \bm{v}_{\lambda_0}*\bar{\bf f} )(0) <
		\frac{1}{2^{(d-2)/2}\left[ z_k{\rm Lip} (\sigma)\right]^2},
	\]
	and
	\begin{align}\label{E:bar-f}
		\bar{\bf f} :=\begin{cases}
			f&\text{when $f$ satisfies \eqref{Dalang},}\\
			|h|*|\tilde{h}|&\text{when $f$ satisfies \eqref{f=h*h} for some
				$h\in\bigcup_{p>1}\mathcal{G}_p(\R^d)$}.
		\end{cases}
	\end{align}
\end{theorem}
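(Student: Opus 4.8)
The plan is to establish Malliavin differentiability and the pointwise derivative bound \eqref{L^p_norm} by applying the Picard scheme of Theorem~\ref{th:exist} within the Sobolev spaces $\mathbb{D}^{1,k}$, and passing to the limit. First I would show inductively that each Picard iterate $u_n(t\,,x)$ of \eqref{dn1} belongs to $\bigcap_{k\ge2}\mathbb{D}^{1,k}$; this uses the fact that $\mathbb{D}^{1,k}$ is stable under the composition with Lipschitz functions $\sigma$ and that the stochastic convolution operator $\bm{p}\circledast(\cdot)\eta^{(h)}$ preserves $\mathbb{D}^{1,k}$, together with the commutation relation between the Malliavin derivative and the Walsh/Skorokhod integral. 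Applying $D_{s,y}$ to \eqref{dn1} and using that $D$ annihilates the deterministic term $\bm{p}_t*u_0$ gives the recursive identity
\begin{equation*}
	D_{s,y}u_{n+1}(t\,,x) = \bm{p}_{t-s}(x-y)\sigma(u_n(s\,,y))
	+ \int_{(s,t)\times\R^d}\bm{p}_{t-r}(x-z)\Sigma_n(r\,,z)\,D_{s,y}u_n(r\,,z)\,\eta^{(h)}(\d r\,\d z),
\end{equation*}
where $\Sigma_n(r\,,z)$ is a bounded (by $\mathrm{Lip}(\sigma)$) adapted coefficient arising from the chain rule applied to $\sigma(u_n)$; the first term is the ``diagonal'' contribution coming from differentiating the integrand at $(s\,,y)$.

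Next I would derive the $L^k(\Omega)$-bound on $\|D_{s,y}u_{n+1}(t\,,x)\|_k$ by taking $k$th moments of the displayed recursion, using the Burkholder--Davis--Gundy inequality with constant $z_k$, the Cauchy--Schwarz inequality, and the Chapman--Kolmogorov property of $\bm{p}$, in the same spirit as the proof of Lemma~\ref{lem:Young}. The key technical point is to carry the heat-kernel factor $\bm{p}_{t-s}(x-y)$ through the iteration: one posits an ansatz $\|D_{s,y}u_n(t\,,x)\|_k\le A_n\,\e^{\lambda_0(t-s)}\bm{p}_{t-s}(x-y)$, substitutes into the recursion, and uses the semigroup identity $\int_{\R^d}\bm{p}_{t-r}(x-z)^2\bm{p}_{r-s}(z-y)^2\,\d z$ controlled via $\bm{p}_{t-r}(x-z)^2\le 2^{-d/2}(4\pi(t-r))^{-d/2}\bm{p}_{2(t-r)}(x-z)$ to pull out the factor $\bm{p}_{t-s}(x-y)$ and a convolution against $\bm{v}_{\lambda_0}*\bar{\bf f}$ evaluated at $0$; here $C_{T,k}$ from \eqref{C_{T, k}} bounds $\|\sigma(u_n)\|_k$ uniformly in $n$ and controls the diagonal term. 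This produces a recursion $A_{n+1}\le 2C_{T,k} + \theta A_n$ with $\theta := 2^{(d-2)/2}[z_k\mathrm{Lip}(\sigma)]^2(\bm{v}_{\lambda_0}*\bar{\bf f})(0)<1$ by the choice of $\lambda_0$, whose fixed point is exactly the constant appearing in \eqref{L^p_norm}.

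Finally I would pass to the limit: since $u_n\to u$ in $\mathbb{W}_{\beta,k}$ (Theorem~\ref{th:exist}) and $\sup_n\|Du_n(t\,,x)\|_{L^k(\Omega;\HH)}<\infty$ by the uniform bound just obtained, the closedness of the operator $D$ on $L^k(\Omega)$ implies $u(t\,,x)\in\mathbb{D}^{1,k}$ and $Du_n(t\,,x)\to Du(t\,,x)$ weakly; Fatou's lemma (or lower semicontinuity of the norm under weak convergence) then transfers the bound \eqref{L^p_norm} to the limit $u$. The main obstacle I anticipate is the bookkeeping in the second step: making the heat-kernel ansatz self-reproducing requires a careful estimate of the space-time convolution $\int_s^t\d r\int_{\R^d}\d z\,\bm{p}_{t-r}(x-z)^2\bm{p}_{r-s}(z-y)^2|\bar{\bf f}|(\cdots)$, and getting precisely the constant $1-2^{(d-2)/2}[z_k\mathrm{Lip}(\sigma)]^2(\bm{v}_{\lambda_0}*\bar{\bf f})(0)$ under the square root (rather than a cruder exponential-in-$t$ bound) forces one to extract the $\bm{v}_{\lambda_0}$ potential cleanly and to keep the $2^{(d-2)/2}$ factor honest. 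A secondary subtlety is that when $f$ is a genuine distribution one must replace the convolutions $\bar{\bf f}=f$ and its pairings by the appropriate distributional brackets, but as in the proof of Proposition~\ref{Clark-Ocone} this introduces no new ideas, only heavier notation, so I would carry out the estimates for the function case and indicate the distributional modification in a remark.
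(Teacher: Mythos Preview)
Your overall architecture---Picard iteration, the commutation formula for $D$ acting on the Walsh/Skorokhod integral, BDG with constant $z_k$, and passage to the limit via closedness of $D$---is correct and is exactly what the paper does. The recursive identity you wrote for $D_{s,y}u_{n+1}(t,x)$ is also right.

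The gap is in the step you yourself flag as the main obstacle: making the heat-kernel factor $\bm{p}_{t-s}(x-y)$ self-reproducing. Your proposed mechanism, bounding $\bm{p}_{t-r}(x-z)^2$ by a constant times $(t-r)^{-d/2}\bm{p}_{2(t-r)}(x-z)$ and then invoking the semigroup property, does not work. After BDG and substitution of the ansatz, the relevant quantity is the double spatial integral
\[
	\int_s^t\d r\int_{\R^d}\d z\int_{\R^d}\d w\
	\bm{p}_{t-r}(x-z)\bm{p}_{t-r}(x-w)\bm{p}_{r-s}(z-y)\bm{p}_{r-s}(w-y)\,\bar{\bf f}(z-w),
\]
not a single integral of squared kernels. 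Even if one forces a single integral by Cauchy--Schwarz, replacing $\bm{p}_\tau^2$ by $\tau^{-d/2}\bm{p}_{2\tau}$ and convolving yields $\bm{p}_{2(t-s)}(x-y)$ multiplied by $\int_s^t(t-r)^{-d/2}(r-s)^{-d/2}\,\d r$, which diverges for $d\ge 2$; the ansatz does not close.

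What the paper does instead is use the exact Gaussian product identity
\[
	\bm{p}_{t-r}(x-z)\bm{p}_{r-s}(z-y)=\bm{p}_{t-s}(x-y)\,
	\bm{p}_{(t-r)(r-s)/(t-s)}\!\left(z-y-\tfrac{r-s}{t-s}(x-y)\right),
\]
applied once in $z$ and once in $w$, to factor out $\bm{p}_{t-s}(x-y)^2$ cleanly and leave a convolution $(\bm{p}_{2\tau}*\bar{\bf f})(0)$ with the \emph{bridge} time $\tau=(t-r)(r-s)/(t-s)$. The bridge time is then converted to $2(t-r)$ by Lemma~\ref{inequality:g}, which is where the factor $2^{(d+2)/2}$ (and ultimately the $2^{(d-2)/2}$ in the denominator) originates. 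Rather than carrying a single exponential weight $A_n\e^{\lambda_0(t-s)}$ through the induction, the paper tracks the partial sums $\sum_{i=0}^n\gamma^i h_i(t-s)$ with $\gamma=2^{d/2}[z_k\mathrm{Lip}(\sigma)]^2$ and the iterated convolutions $h_i$ of \eqref{function:h_n}, and only at the end sums the series via Lemma~\ref{H:finite} to produce the exponential and the constant $\bigl(1-2^{(d-2)/2}[z_k\mathrm{Lip}(\sigma)]^2(\bm{v}_{\lambda_0}*\bar{\bf f})(0)\bigr)^{-1/2}$. Your linear recursion $A_{n+1}\le 2C_{T,k}+\theta A_n$ would, even if the inductive step were fixed, give a limit $2C_{T,k}/(1-\theta)$ rather than $2C_{T,k}/\sqrt{1-\theta}$; the squared quantities are what iterate naturally here. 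The remaining parts of your plan (uniform $\mathbb{D}^{1,k}$ bounds, weak convergence of $Du_n$, and the distributional-$f$ remark) match the paper.
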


\begin{remark}
	In the case that  $f$ is a nonnegative \emph{function} that
	satisfies Dalang's condition \eqref{Dalang},
	the first part of this theorem ---
	namely that $u(t\,,x)\in\cap_{k\ge2}\mathbb{D}^{1,k}$ ---
	was proved by Chen and Huang \cite[Proposition 3.2]{ChenHuang2019}.
\end{remark}

The proof of Theorem \ref{malliavin_derivative} first requires some preliminary
development, which we present as two lemmas.

\begin{lemma}\label{inequality:g}
	Choose and fix a real number $T>0$. Then, for
	every nondecreasing function $g: [0\,, T] \mapsto \R_+$
	and  for all $t \in (0\,, T)$ and $y \in \R^d$,
	\[
		\int_0^tg(s)\bm{p}_{2s(t - s)/t}(y)\,\d s \le
		2^{(d+2)/2}\int_{0}^{t}g(s)\bm{p}_{2(t - s)}(y)\,\d s.
	\]
\end{lemma}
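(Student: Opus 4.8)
The plan is to reduce the inequality to an elementary pointwise comparison between two Gaussian kernels, and then to split the integral at $s=t/2$, handling the two halves by slightly different arguments. The key elementary fact is that if $0<a\le b\le 2a$, then $\bm{p}_a(y)\le 2^{d/2}\bm{p}_b(y)$ for every $y\in\R^d$: by \eqref{p} the Gaussian prefactors obey $(2\pi a)^{-d/2}\le(2\pi(b/2))^{-d/2}$ since $b/2\le a$, while the exponential factors obey $\exp(-\|y\|^2/(2a))\le\exp(-\|y\|^2/(2b))$ since $a\le b$; multiplying these two inequalities gives the claim. (When $y=0$ and $d\ge 2$, both sides of the asserted inequality are $+\infty$, so it holds trivially; for $y\neq 0$ all integrals below are finite and every manipulation is legitimate, so I would dispose of $y=0$ at the outset.)

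On the interval $[t/2\,,t]$ the argument is direct: with $a=2s(t-s)/t$ and $b=2(t-s)$ one has $a\le b$ (because $s\le t$) and $b\le 2a$ (because $s\ge t/2$ forces $t/s\le 2$), so $\bm{p}_{2s(t-s)/t}(y)\le 2^{d/2}\bm{p}_{2(t-s)}(y)$ there, whence
\[
	\int_{t/2}^{t}g(s)\bm{p}_{2s(t-s)/t}(y)\,\d s\le 2^{d/2}\int_{t/2}^{t}g(s)\bm{p}_{2(t-s)}(y)\,\d s\le 2^{d/2}\int_0^{t}g(s)\bm{p}_{2(t-s)}(y)\,\d s.
\]

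The interval $[0\,,t/2]$ is the part requiring care, and the \emph{main obstacle}, because there the two time parameters $2s(t-s)/t$ and $2(t-s)$ are not comparable (their ratio is $t/s$, which blows up as $s\downarrow 0$). Here I would instead compare $2s(t-s)/t$ with $2s$: on $[0\,,t/2]$ one has $2s(t-s)/t\le 2s\le 4s(t-s)/t$ (the second inequality because $s\le t/2$ gives $t-s\ge t/2$), so $\bm{p}_{2s(t-s)/t}(y)\le 2^{d/2}\bm{p}_{2s}(y)$. Now one uses the hypothesis that $g$ is nondecreasing, together with $s\le t-s$ on $[0\,,t/2]$, to replace $g(s)$ by $g(t-s)$, and then substitutes $r=t-s$, obtaining
\[
	\int_0^{t/2}g(s)\bm{p}_{2s(t-s)/t}(y)\,\d s\le 2^{d/2}\int_0^{t/2}g(t-s)\bm{p}_{2s}(y)\,\d s=2^{d/2}\int_{t/2}^{t}g(r)\bm{p}_{2(t-r)}(y)\,\d r\le 2^{d/2}\int_0^{t}g(s)\bm{p}_{2(t-s)}(y)\,\d s.
\]

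Adding the bounds from the two halves yields the constant $2^{d/2}+2^{d/2}=2^{(d+2)/2}$, which is exactly the claimed inequality. I expect the only genuine difficulty to be the treatment of $[0\,,t/2]$: the naive pointwise heat-kernel comparison against the target kernel $\bm{p}_{2(t-s)}$ fails there, and the remedy — routing through $\bm{p}_{2s}$ and invoking monotonicity of $g$ via the reflection $s\mapsto t-s$ — is the one place where the hypothesis on $g$ is actually used. The degeneracy of the kernels at the endpoints $s\in\{0\,,t\}$ is harmless, being either a measure-zero issue (for $y\ne0$) or a trivial $\infty\le\infty$ statement (for $y=0$, $d\ge2$), as already noted.
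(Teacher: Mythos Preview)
Your proof is correct and follows essentially the same approach as the paper's own argument: split the integral at $s=t/2$, use the reflection $s\mapsto t-s$ together with the monotonicity of $g$ to handle one half, and use the pointwise heat-kernel comparison (your ``elementary fact'' $0<a\le b\le 2a\Rightarrow\bm{p}_a\le 2^{d/2}\bm{p}_b$) on $[t/2,t]$. The paper reflects first and then compares kernels once, whereas you compare kernels on each half separately and then add; these are the same computation with the steps reordered, and your formulation of the Gaussian comparison as a standalone lemma is a clean touch.
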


\begin{proof}
	The proof is similar to Lemma B.1 of Chen and Huang
	\cite{CH19Comparison}. Since $g$ is monotone,
	\[
		\int_0^{t/2}g(s)\bm{p}_{2s(t - s)/t}(y)\,\d s
		= \int_{t/2}^{t}g(t  - s)\bm{p}_{2s(t - s)/t}(y)\,\d s
		\le  \int_{t/2}^{t}g(s)\bm{p}_{2s(t - s)/t}(y)\,\d s.
	\]
	Hence,
	\begin{align*}
		\int_0^t g(s)\bm{p}_{2s(t - s)/t}(y)\,\d s
			&\le  2\int_{t/2}^{t} g(s)\bm{p}_{2s(t - s)/t}(y)\,\d s\\
		&= 2\int_{t/2}^{t} g(s) \exp\left(-\dfrac{t\|y\|^2}{4s(t - s)}\right)
			\frac{\d s}{(4\pi s(t - s)/t)^{d/2}}\nonumber\\
		& \le  2^{(d+2)/2}\int_{t/2}^{t} g(s)\exp\left(-\dfrac{\|y\|^2}{4(t - s)}\right)
			\frac{\d s}{(4\pi (t - s))^{d/2}}\\
		&=2^{(d+2)/2}\int_{t/2}^{t} g(s) \bm{p}_{2(t - s)}(y)\,\d s ,
	\end{align*}
	which clearly is bounded from above by $2^{(d+2)/2}\int_0^t g(s)
	\bm{p}_{2(t-s)}\,\d s$.
\end{proof}

In order to estimate the $L^k$-norm of the Malliavin derivative of the solution, we introduce some notations which will be used later on.
\begin{equation}\label{function:k}
	\kappa(t):= \left( \bm{p}_{2t}*\bar{\bf f}\right)(0),
\end{equation}
for the same distribution $\bar{\bf f}$ that was defined in the statement of Theorem \ref{malliavin_derivative}.
Next, define $h_0(t) \equiv 1$ and
\begin{equation}\label{function:h_n}
	h_n(t):= \int_0^t h_{n - 1}(s)\kappa(t - s)\,\d s
	\qquad\text{for all $t>0$ and $n\ge1$}.
\end{equation}
By induction, it is clear that the function $h_n$ is nondecreasing for all $n \geq 0$.
[The functions $\{h_n\}_{n\ge0}$ should not be confused with the function $h$ in
\eqref{f=h*h}.]

We now follow Chen and Huang \cite{CH19Comparison}
and define for every $\gamma \geq 0$ and $t > 0$,
\begin{align}\label{function:H}
	H(t\,; \gamma):= \sum_{n = 0}^{\infty}\gamma^nh_n(t).
\end{align}
Recall \eqref{HRD}.

\begin{lemma}\label{H:finite}
	Suppose $f$ satisfies either \eqref{Dalang}, or
	\eqref{f=h*h} with $h\in\cup_{p>1}\mathcal{G}_p(\R^d)$.
	Then, for all $\gamma \geq 0$ and $t \geq 0$, the following inequality holds
	\[
		H(t\,; \gamma) \le  \frac{\e^{2\lambda t}}{1 - \tfrac12
		\gamma \left(\bm{v}_{\lambda}*\bar{\bf f}\right)(0)}
		\qquad\text{for all $t>0$},
	\]
	provided that $\lambda > 0$ and $0\le \gamma < 2/(\bm{v}_{\lambda}*\bar{\bf f})(0)$.
\end{lemma}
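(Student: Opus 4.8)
The plan is to realize $H(\cdot\,;\gamma)$ as the limit of the partial sums of a renewal-type integral equation and to dominate it by an explicit exponential supersolution. Abbreviate $\kappa$ as in \eqref{function:k} and put $A:=\tfrac12(\bm v_\lambda*\bar{\bf f})(0)$. First I would note that $\kappa\ge0$: indeed $\bm p_{2t}*\bar{\bf f}$ is a continuous function of positive type, since its Fourier transform is $\mathrm e^{-t\|\cdot\|^2}\widehat{\bar{\bf f}}$, a finite nonnegative measure (the Gaussian factor decays faster than the polynomial growth of the tempered object $\widehat{\bar{\bf f}}$, and $\widehat{\bar{\bf f}}\ge0$ by \eqref{Dalang} in the first case of \eqref{E:bar-f} and by Proposition~\ref{pr:PD} in the second), so in particular $\kappa(t)=(\bm p_{2t}*\bar{\bf f})(0)\in[0\,,\infty)$ for every $t>0$. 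By induction, every $h_n$ defined by \eqref{function:h_n} is therefore nonnegative, so the partial sums $H_N(t\,;\gamma):=\sum_{n=0}^N\gamma^n h_n(t)$ increase to $H(t\,;\gamma)$ as $N\to\infty$, the limit a priori lying in $[0\,,\infty]$. Interchanging the finite sum with the integral in \eqref{function:h_n} gives the recursion $H_N(t\,;\gamma)=1+\gamma\int_0^t H_{N-1}(s\,;\gamma)\kappa(t-s)\,\d s$ for $N\ge1$, with $H_0\equiv1$. Finally, Tonelli's theorem applied to \eqref{HRD} yields $\int_0^\infty\mathrm e^{-2\lambda s}\kappa(s)\,\d s=\tfrac12(\bm v_\lambda*\bar{\bf f})(0)=A$, which is finite by \eqref{Dalang} (first case) or by Lemma~\ref{lem:Dalang:Lp} (second case), and which, by hypothesis, satisfies $\gamma A<1$.

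Next I would verify that $\Phi(t):=\mathrm e^{2\lambda t}/(1-\gamma A)$ is a supersolution of the renewal equation, i.e.
\[
	\Phi(t)\ge 1+\gamma\int_0^t\Phi(s)\kappa(t-s)\,\d s\qquad\text{for all }t\ge0.
\]
Indeed, substituting $r=t-s$,
\[
	\gamma\int_0^t\Phi(s)\kappa(t-s)\,\d s
	=\frac{\gamma\,\mathrm e^{2\lambda t}}{1-\gamma A}\int_0^t\mathrm e^{-2\lambda r}\kappa(r)\,\d r
	\le\frac{\gamma A}{1-\gamma A}\,\mathrm e^{2\lambda t},
\]
and $1+\tfrac{\gamma A}{1-\gamma A}\mathrm e^{2\lambda t}\le\tfrac{\mathrm e^{2\lambda t}}{1-\gamma A}$ because this inequality is equivalent to $1\le\mathrm e^{2\lambda t}$.

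It then remains only to run a one-step induction: $H_0\equiv1\le\Phi$ on $[0\,,\infty)$, and if $H_{N-1}\le\Phi$ there, then, using $\kappa\ge0$ and the supersolution property,
\[
	H_N(t\,;\gamma)=1+\gamma\int_0^t H_{N-1}(s\,;\gamma)\kappa(t-s)\,\d s\le 1+\gamma\int_0^t\Phi(s)\kappa(t-s)\,\d s\le\Phi(t).
\]
Letting $N\to\infty$ and invoking monotone convergence gives $H(t\,;\gamma)\le\Phi(t)=\mathrm e^{2\lambda t}/\bigl(1-\tfrac12\gamma(\bm v_\lambda*\bar{\bf f})(0)\bigr)$, which is the assertion. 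I expect no genuine obstacle here; the only points deserving care are the nonnegativity and pointwise finiteness of $\kappa$ (dispatched above via the positive-definiteness of $\bm p_{2t}*\bar{\bf f}$ together with \eqref{Dalang} and Lemma~\ref{lem:Dalang:Lp}) and the passage to the limit, which is immediate from the monotonicity coming from $\kappa\ge0$.
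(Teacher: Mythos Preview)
Your proof is correct and follows essentially the same approach as the paper's: both rest on the identity $\int_0^\infty \e^{-2\lambda s}\kappa(s)\,\d s = \tfrac12(\bm v_\lambda*\bar{\bf f})(0)$ together with an induction. The only cosmetic difference is that the paper bounds each term separately---showing $\sup_{t>0}\e^{-2\lambda t}h_n(t)\le A^n$ by induction on $n$ and then summing the resulting geometric series---whereas you bound the partial sums $H_N$ directly via a supersolution comparison; the underlying computation is the same.
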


\begin{proof}
	Define $\mu_n := \sup_{t>0} [ \e^{-2\lambda t} h_n(t)]$ for every integer $n\ge0$, and
	note that
	\[
		h_{n + 1}(t) = \int_0^t h_{n}(s)\kappa(t - s)\,\d s
		\le \e^{2\lambda t}\mu_n\int_0^t \e^{-2\lambda (t-s)}\kappa(t-s)\,\d s
		\le \e^{2\lambda t}\mu_n\int_0^\infty \e^{-2\lambda s}\kappa(s)\,\d s,
	\]
	for all $n\ge0$ and $t>0$. Thus,
	$\mu_{n+1} \le \frac12\mu_n(\bm{v}_\lambda*\bar{\bf f})(0)$
	for all $n\ge0$. Since  $\mu_0=1$, we are led to
	\[
		h_n(t) \le \left[ \tfrac12\left(\bm{v}_\lambda*\bar{\bf f}\right)(0)\right]^n
		\qquad\text{for all $t>0$ and $n\ge0$},
	\]
	which leads to the lemma summarily.
\end{proof}

We are now ready to prove Theorem \ref{malliavin_derivative}.

\begin{proof}[Proof of Theorem \ref{malliavin_derivative}]
	Throughout, we choose and fix a real number $T>0$.
	We will prove the result in the case that $f$ is a \emph{function} that
	satisfies either \eqref{Dalang}, or \eqref{f=h*h} for some
	$h\in\cup_{p>1}\mathcal{G}_p(\R^d)$. The remaining case is when $f$
	is a measure that satisfies Dalang's condition \eqref{Dalang}; that case is proved
	by making only small adjustments to the following argument, but requires the introduction
	of a good deal of notation. Therefore, we consider only the case that $f$ is a function.
	Note in particular, that $\bar{\bf f}$ is also a function,
	and regardless of whether or not $f$ is signed, we always have
	$|f|\le |\bar{\bf f}|$.
	From here on, we adapt the iterative method of \cite[Lemma 2.2]{CH19Comparison}.
	
	Let $u_0(t\,,x) :=1$ for all $t>0$ and $x\in\R^d$, and
	recall  the Picard iterations introduced in \eqref{dn1}:
	\[
		u_{n+1}(t\,,x):= 1 + \int_{\R_+\times\R^d} \bm{p}_{t-s} (x-y) \sigma(u_n(s\,,y)) \,\eta(\d s\,\d y),
	\]
	for all $n\ge0$, $t>0$, and $x\in\R^d$.
	
		Let $C_{T, k}$ be the constant in \eqref{C_{T, k}} and define
	\[
		\gamma:= 2^{d/2} \left[ z_k  {\rm Lip} (\sigma)\right]^2,
	\]
	where $z_k$ was defined in \eqref{z_k}.
	We claim that, for the above choice of $\gamma$,
	$u_{n}(t\,,x)\in \mathbb{D}^{1,k}$
	for every $(t\,,x) \in (0\,,T)\times \R^d$ and $k\ge 2$,
	and
	\begin{equation} \label{e2}
		\| D_{s,y} u_{n}(t\,,x) \|_{k} \le \sqrt{2}C_{T,k}\,
		\bm{p}_{t-s}(x-y)\left(\sum_{i = 0}^{n}\gamma^ih_i(t - s)\right)^{1/2},
	\end{equation}
	for almost every $(s\,,y) \in (0\,,t) \times \R^d$. Let ($P_n$) denote this logical
	proposition. Clearly ($P_0$) holds, as the left-hand side of \eqref{e2} is equal to zero.
	Now suppose ($P_k$) holds for every integer $k=0\,,\ldots,n$, where $n\ge0$ is
	integral. We propose to derive the conditional truth of ($P_{n+1})$. This will be enough to prove \eqref{e2}
	inductively.
	
	According to Proposition 1.2.4 of Nualart \cite{Nualart},
	$\sigma(u_n(t\,,x))\in\mathbb{D}^{1,k}$  for every $(t\,,x) \in(0\,,T)\times \R^d$;
	moreover,
	\[
		D( \sigma( u_n(t\,,x))) = \Sigma_n Du_n(t\,,x)
		\qquad\text{a.s.}
	\]
	where $\Sigma_n:=\sigma'(u_n(t\,,x))$ for any version of the
	derivative $\sigma'$. This is  because, on the event $\{ \| Du_n(t\,,x)\|_{\HH} >0\}$,
	the random variable $u_n(t\,,x)$ is absolutely continuous.

	We apply the properties of the divergence operator (see \cite[Prop.\ 1.3.8]{Nualart})
	in order to find that
	$ \int_{(0,t)\times\R^d} \bm{p}_{t-s} (x-y) \sigma(u_n(s\,,y))\, \eta(\d s \,\d y)
	\in \mathbb{D}^{1,k} $. Moreover,
	\begin{align*}
		D_{r,z} u_{n+1}(t\,,x) &=
			D_{r,z} \left(\int_{(0,t)\times\R^d} \bm{p}_{t-s} (x-y) \sigma(u_n(s\,,y))\,
				\eta(\d s \,\d y) \right)\\
		 &= \bm{p}_{t-r} (x-z) \sigma(u_n(r\,,z)) + \int_{(r,t)\times\R^d} \bm{p}_{t-s} (x-y)
		 	\Sigma_n  D_{r,z} u_n(s\,,y )\,\eta(\d s \,\d y)
			\qquad\text{a.s.,}
	 \end{align*}
	 whence
	 \begin{align*}
		 & \left\|   D_{r,z} u_{n+1}(t\,,x)  \right\|_k \le
			\bm{p}_{t-r} (x-z)  \left\| \sigma(u_n(r\,,z))  \right\|_k \\
		& \hskip2.5in + \left\|  \int_{(r,t)\times\R^d}
			\bm{p}_{t-s} (x-y)  \Sigma_n  D_{r,z} u_n(s\,,y )\,\eta(\d s \,\d y)
			\right\|_k,
	\end{align*}
	for every integer $k\ge2$.
	  Define
	\[
		\mathcal{P}_\tau(y\,,w\,;x) := \bm{p}_\tau (x-y)\bm{p}_\tau (x-w)
		\qquad\text{for every $\tau>0$ and $x,y,w\in\R^d$}.
	\]
	Then, the Burkholder--Davis--Gundy inequality \cite{Burkholder,BDG,BG} implies that
	\begin{align*}
		& \E \left( \left|  \int_{(r,t)\times\R^d} \bm{p}_{t-s} (x-y)
			\Sigma_n  D_{r,z} u_n(s\,,y )\,\eta(\d s \,\d y)\right |^k \right)  \\
		& \le \left[z_k{\rm Lip} (\sigma)\right]^k  \E \left( \left|
			\int_r^t  \d s \int_{\R^d} \d y\int_{\R^d}d w \
			\mathcal{P}_{t-s}(y\,,w\,;x)
			|D_{r,z} u_n(s\,,y)|\, |D_{r,z} u_n(s\,,w)|  \bar{\bf f}(y-w) \right| ^{k/2} \right).
	\end{align*}
	Back-to-back appeals to the inequalities of Minkowski and Cauchy--Schwarz then leads us to the following:
	\begin{align*}
		& \E \left( \left|  \int_{(r,t)\times\R^d} \bm{p}_{t-s} (x-y)
			\Sigma_n  D_{r,z} u_n(s\,,y )\,\eta(\d s \,\d y)\right |^k \right)  \\
		& \le \left[ z_k {\rm Lip} (\sigma)\right]^k   \left[  \int_r^t \d s
			\int_{\R^d}  \d y \int_{\R^d}\d w\
			\mathcal{P}_{t-s}(y\,,w\,;x)
			\left\| D_{r,z} u_n(s\,,y) D_{r,z} u_n(s\,,w ) \right\|_{k/2}  \bar{\bf f}(y-w) \right] ^{k/2}\\
		 & \le \left[ z_k {\rm Lip} (\sigma)\right]^k   \left[  \int_r^t \d s
			\int_{\R^d}  \d y \int_{\R^d}\d w\
			\mathcal{P}_{t-s}(y\,,w\,;x)
			\left\| D_{r,z} u_n(s\,,y) \right\|_k \left\|D_{r,z} u_n(s\,,w ) \right\|_k  \bar{\bf f}(y-w) \right] ^{k/2}.
	\end{align*}	
	The preceding displayed computations yield the following inequality on the Malliavin derivative of
	$u_{n+1}(t\,,x)$:
	\begin{align*}
		&\|D_{r,z} u_{n+1}(t\,,x)\|_k\\
		&\le C_{T, k}\, \bm{p}_{t - r}(x - z) \\
		&+  z_k{\rm Lip} (\sigma)\left[
			\int_r^t  \d s \int_{\R^d}   \d y\int_{\R^d}\d w \
			\mathcal{P}_{t-s}(y\,,w\,;x)
			\left\| D_{r,z} u_n(s\,,y ) \| _k \|D_{r,z} u_n(s\,,w ) \right\|_k  \bar{\bf f}(y-w)\right]^{1/2}.
	\end{align*}
	
	By our induction hypothesis, ($P_n$) is valid; that is, \eqref{e2} holds [for $n$], whence
	\begin{equation}\label{W}\begin{split}
		& \int_r^t  \d s \int_{\R^d}   \d y\int_{\R^d}\d w \
			\mathcal{P}_{t-s}(y\,,w\,;x)
			\left\| D_{r,z} u_n(s\,,y ) \| _k \|D_{r,z} u_n(s\,,w ) \right\|_k  \bar{\bf f}(y-w)\\
		&\le  2C_{T, k}^2\sum_{i = 0}^{n}\gamma^i  \int_r^t   \d s\ h_i(s - r)
			\int_{\R^d} \d y\int_{\R^d}\d w\
			\mathcal{P}_{t-s}(y\,,w\,;x)
			\mathcal{P}_{t-s}(y\,,w\,;z) \bar{\bf f}(y-w).
	\end{split}\end{equation}
	We unscramble the notation for $\mathcal{P}$ in order to see that the
	elementary pointwise
	inequality,\footnote{See for example the formula below (2.10) in Chen and Huang \cite{CH19Comparison}.}
	\[
		\bm{p}_{t -s}(x - y)\bm{p}_s(y -z) = \bm{p}_t(x -z)\bm{p}_{s(t - s)/t}
		\left(y - z - \frac{s}{t}(x - z)\right),
	\]
	yields the following upper bound for the quantity on the right-hand side of \eqref{W}:
	\begin{align*}
		& 2C_{T, k}^2\bm{p}^2_{t- r}(x -z)\sum_{i = 0}^{n}\gamma^i
			\int_r^t   \d s\ h_i(s - r) \int_{\R^d} \d y \int_{\R^d}\d w  \  \bar{\bf f}(y-w) \\
		&\hskip.5in
			\times\bm{p}_{(s - r)(t - s)/(t-r)}\left(y - z - \frac{s - r}{t - r}(x - z)\right)
			\bm{p}_{(s - r)(t - s)/(t-r)}\left(w - z - \frac{s - r}{t - r}(x - z) \right) \\
		& = 2C_{T, k}^2\bm{p}^2_{t- r}(x -z)\sum_{i = 0}^{n}\gamma^i  \int_0^{t  - r}   \d s\
			h_i(s) \int_{\R^d} \d  y\   \bar{\bf f}(y) \bm{p}_{2s(t - r - s)/(t-r)}(y),
	\end{align*}
	where the final identity can be deduced from a change
	of variables $[y - w \to y]$ and the semigroup property of the heat kernel.
	
	Since every function $h_i$ is nondecreasing
	and $(a+b)^2\le2a^2+2b^2$ for all $a,b\ge0$, Lemma \ref{inequality:g} implies that
	\begin{align*}
		\|D_{r,z} u_{n+1}(t\,,x)\|_k^2 &\le  2C^2_{T, k}\, \bm{p}^2_{t - r}(x - z)
			+ 2^{(d+4)/2} \left[z_k {\rm Lip} (\sigma)\right]^2
			C_{T, k}^2\bm{p}^2_{t- r}(x -z) \\
		& \hskip2in\times \sum_{i = 0}^{n}\gamma^i\int_{\R^d} \d y\
			\bar{\bf f}(y)\int_0^{t - r}\d s\ h_i(s)\bm{p}_{2(t - r - s)}(y) \\
		&  = 2C^2_{T, k}\, \bm{p}^2_{t - r}(x - z)  + 2^{(d+4)/2}
			\left[z_k {\rm Lip} (\sigma)\right]^2
			C^2_{T, k}\bm{p}^2_{t- r}(x -z)
			\sum_{i = 0}^{n}\gamma^ih_{i + 1}(t - r) \\
		& =2C^2_{T, k}\, \bm{p}^2_{t - r}(x - z) \left(1 + \gamma\sum_{i = 0}^{n}
			\gamma^ih_{i + 1}(t - r)  \right).
	\end{align*}
	This proves the conditional validity of the proposition
	($P_{n+1}$), given that ($P_j$) is valid for all $j=0,\ldots,n$.
	Induction  yields \eqref{e2};
	we can now conclude the proof as follows.

	Because $\lim_{n\to\infty} u_n(t\,,x)= u(t\,,x)$ in $L^k(\Omega)$,
	\eqref{e2} and Lemma \ref{H:finite} together imply that
	\[
		\sup_{n\ge0}  \E \left( \| Du_n(t\,,x)\|^k_{\HH}\right) <\infty.
	\]
	Lemma 1.5.3 of Nualart \cite{Nualart} now implies that $u(t\,,x) \in \mathbb{D}^{1,k}$.
	
	Finally, it remains to show that the estimate \eqref{L^p_norm}
	holds for $u(t\,,x)$, where $t\in(0\,,T)$ and $x\in\R^d$
	are held fixed. This follows from the fact that $Du_n(t\,,x)$ converges
	in the weak topology of $L^k(\Omega\, ; \HH)$ to $Du(t\,,x)$
	possibly after moving to a subsequence.
	This proof is a little bit involved and carried out as follows:
	First note that, because of \eqref{e2},
	\[
		\sup_{n\ge0}  \E\left( \| Du_n(t\,,x)\|^k_{L^k(\R_+ \times \R^d)} \right) <\infty
		\qquad\text{for $1\le k<\frac2d+1$}.
	\]
	Fix such a $k$. It follows that, after possibly moving to subsequence,
	$Du_n(t\,,x)$  converges to $Du(t\,,x)$ in the weak topology of
	$L^k(\Omega\, ; \R_+\times \R^d)$,
	whence
	\[
		Du(t\,,x)\in L^k(\Omega \,; \R_+\times \R^d).
	\]
	Then, we use a smooth approximation $\{\psi_\epsilon\}_{\epsilon>0}$
	to the identity in $\R_+\times \R^d$,  and apply Fatou's lemma and
	duality for $L^p$-spaces in order
	to find that, for almost every $(s\,,y) \in [0\,,t] \times \R^d$ and for all $k\ge 2$,
	\begin{align*}
		\|D_{s,y}u(t\,,x) \|_k & \le  \sup_{\epsilon >0}
			\left \| \int_{\R_+\times \R^d} D_{s',y'} u(t\,,x)
			\psi_\epsilon(s'-s\,, y'-y) \d s'\d y' \right\|_k\\
	 & \le  \sup_{\epsilon>0}
		\sup_{\|G \|_{k/(k - 1)}\le 1}
		\left| \int_{\R_+\times \R^d}  \E\left[ G D_{s',y'} u(t\,,x) \right]
		\psi_\epsilon(s'-s\,, y'-y) \d s'\d y'  \right|.
	\end{align*}
	Choose and fix a random variable $G\in L^{k/(k-1)}(\Omega)$ such that
	$\E(|G|^{k/(k-1)})\le 1$. By \eqref{e2}, we can find a subsequence $n(1) < n(2)<\cdots$
	of positive integers such that
	\begin{align*}
		& \left| \int_{\R_+\times \R^d}  \E \left[ G D_{s',y'} u(t\,,x) \right]
			\psi_\epsilon(s'-s\,, y'-y) \d s'\d y'  \right|  \\
		&= \lim _{\ell\rightarrow \infty} \left|
			\int_{\R_+\times \R^d}  \E\left[ G D_{s',y'} u_{n(\ell)}(t\,,x) \right]
			\psi_\epsilon(s'-s\,, y'-y) \d s'\d y'   \right| \\
		&\le\limsup_{\ell\to\infty}
			\int_{\R_+\times \R^d}  \left\| D_{s',y'} u_{n(\ell)}(t\,,x) \right\|_k
			\psi_\epsilon(s'-s\,, y'-y) \d s'\d y' \\
		& \le 2C_{T, k}\,  \bm{p}_{t-s}(x-y)\left( \sum_{i=0}^\infty \gamma^i h_i(t-s)\right)^{1/2}.
	\end{align*}
	 An application of Lemma \ref{H:finite} completes the proof of Theorem \ref{malliavin_derivative}.
\end{proof}
	
\begin{remark}\label{rem:doob2}
	One can show that, for any fixed $(t\,,x) \in \R_+ \times \R^d$,  the mapping
	$(s\,,y) \mapsto D_{s,y} u(t\,,x)$
	from $(0\,,t) \times \R^d$ to $L^k(\Omega)$ is continuous for any $k\ge 2$.
	This follows from the fact that $D_{s,y} u(t\,,x)$ solves the following linear integral equation
	(see \cite[Proposition 3.2]{ChenHuang2019} when $f$ is a nonnegative function):
	\[
		D_{s,y} u(t\,,x)=  \bm{p}_{t-s} (x-y) \sigma(u(s\,,y)) +
		\int_{(s,t)\times\R^d} \bm{p}_{t-r} (x-z)
		\sigma'(u(r\,,z))  D_{s,y} u(r\,,z )\,\eta(\d r \,\d z),
	\]
	obtained by applying the operator $D$ to the equation satisfied by $u(t\,,x)$.
	In this context, the asserted $L^k(\Omega)$-continuity is proved by resorting
	to usual arguments based on the Burkholder-Davis-Gundy inequality.
	We omit the many details. This continuity property is relevant though,
	for it allows us to appeal to Doob's theory of separability \cite{Doob}
	in order to deduce the Lebesgue measurability of  $(s\,,y)\mapsto D_{s,y}u(t\,,x)$.
\end{remark}

\section{Proof of stationarity}\label{sec:stat}
For every $\varphi\in C(\R_+\times\R^d)$ and $y\in\R^d$ define shift operators
$\{\theta_y\}_{y\in\R^d}$ as follows:
\[
	(\varphi\circ \theta_y)(t\,,x) = \varphi(t\,,x+y).
\]
Clearly, $\theta:=\{\theta_y\}_{y\in\R^d}$ is a group under composition. The
following is used tacitly in the literature many times without explicit proof of even mention
(see for example \cite{CJK2013}). It also improves the assertion,  observed by Dalang \cite{Dalang1999} that the 2-point correlation function of $x\mapsto u(t\,,x)$
is invariant under $\theta$. When $\sigma(z)\propto z$
the latter moment invariance (and more) can be deduced directly
from an explicit Feynman--Kac type moment formula;
see for example Chen, Hu, and Nualart \cite{LCHN17}.

\begin{lemma}[Spatial Stationarity]\label{lem:stat}
	Suppose that $f$ either satisfies \eqref{Dalang}, or
	\eqref{f=h*h} for some $h\in\cup_{p>1}\mathcal{G}_p(\R^d)$, so that \eqref{SHE}
	has a unique random-field solution $u$; see Dalang \cite{Dalang1999} and Theorem \ref{th:exist}.
	Then, the random field $u\circ\theta_y$ has the same finite-dimensional
	distributions as $u$ for every $y\in\R^d$.
	In particular, for every $t\ge0$, the finite-dimensional
	distributions of $\{u(t\,,x+y)\}_{x\in\R^d}$ do not depend on $y\in\R^d$.
\end{lemma}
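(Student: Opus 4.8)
The plan is to exploit two symmetries --- the spatial homogeneity of the noise $\eta$ and the translation invariance of the heat kernel $\bm{p}$ --- and to transport them through the Picard scheme that produces $u$. Fix $z\in\R^d$ once and for all, and write $\eta\circ\theta_z$ for the shifted noise, i.e.\ the generalized centered Gaussian field determined by $(\eta\circ\theta_z)(\psi):=\eta(\psi\circ\theta_{-z})$. When $f=h*\tilde h$ one may instead realize $\eta=\eta^{(h)}$ from a space-time white noise $\xi$ via \eqref{eta^h}; since the shift commutes with convolution by $h$, the shifted noise $\eta^{(h)}\circ\theta_z$ is built from the shifted white noise $\xi\circ\theta_z$ in precisely the same way, which makes the bookkeeping below elementary because white noise is manifestly translation invariant in law.

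First I would check that $\eta\circ\theta_z\overset{\mathrm{d}}{=}\eta$ as isonormal Gaussian processes on $\HH$. Both families are centered Gaussian, so it suffices to match covariances, and by \eqref{Cov:eta} one has $\Cov\big((\eta\circ\theta_z)(\psi_1),(\eta\circ\theta_z)(\psi_2)\big)=\int_0^\infty\langle\psi_1(t,\cdot-z),\psi_2(t,\cdot-z)*f\rangle_{L^2(\R^d)}\,\d t$; the substitution $x\mapsto x+z$, together with the identity $\psi_2(t,\cdot-z)*f=(\psi_2(t)*f)(\cdot-z)$, reduces this to $\Cov(\eta(\psi_1),\eta(\psi_2))$. (In the white-noise picture this is just translation invariance of $\xi$.)

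Next I would show by induction on $n$ that the $n$-th Picard iterate satisfies $u_n\circ\theta_z=\Phi_n(\eta\circ\theta_z)$ almost surely, where $\Phi_n$ denotes the deterministic, measurable functional of the driving noise defined by the recursion \eqref{dn1} with initial datum $u_0\equiv1$. The case $n=0$ is immediate since $u_0\equiv1$ is shift invariant. For the step, evaluate \eqref{dn1} at $x+z$ and change variables $y\mapsto y+z$ inside the stochastic integral; using $\bm{p}_{t-s}(x+z-(y+z))=\bm{p}_{t-s}(x-y)$ this gives
\[
	(u_{n+1}\circ\theta_z)(t\,,x)=1+\int_{(0,t)\times\R^d}\bm{p}_{t-s}(x-y)\,\sigma\big((u_n\circ\theta_z)(s\,,y)\big)\,(\eta\circ\theta_z)(\d s\,\d y),
\]
and combining with the induction hypothesis identifies the right-hand side with $\Phi_{n+1}(\eta\circ\theta_z)$. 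Granting this, the previous paragraph yields $u_n\circ\theta_z=\Phi_n(\eta\circ\theta_z)\overset{\mathrm{d}}{=}\Phi_n(\eta)=u_n$ for every $n$, as random fields. Finally, Theorem \ref{th:exist} gives $u_n\to u$ in $\mathbb{W}_{\beta,2}$, hence $u_n(t\,,x)\to u(t\,,x)$ in $L^2(\Omega)$ for each $(t\,,x)$, and likewise $u_n\circ\theta_z\to u\circ\theta_z$; passing to the limit in finite-dimensional distributions gives $u\circ\theta_z\overset{\mathrm{d}}{=}u$. The ``in particular'' clause is the special case in which all space-time points share a common time level $t$.

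The step I expect to require the most care is the change of variables inside the stochastic integral, namely the identity $\int G(s\,,y)\,\eta(\d s\,\d(y+z))=\int G(s\,,y-z)\,\eta(\d s\,\d y)$ in the $L^2(\Omega)$ sense. I would establish it first for elementary integrands $G$, where it is a tautology from the definition of $\eta$ on indicator functions, and then extend to all Walsh-integrable $G$ via the $L^2$-isometry \eqref{SID} and the density of elementary integrands; routing everything through the representation $\eta=\eta^{(h)}$ makes both the isometry and the approximation transparent. A secondary (and routine) point is the measurability of the functionals $\Phi_n$ and the identification $u=\lim_n\Phi_n(\eta)$, both of which are already built into the construction in Theorem \ref{th:exist}.
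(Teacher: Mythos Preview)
Your proof is correct and follows essentially the same route as the paper's: both hinge on (i) the translation invariance in law of the noise and (ii) the change-of-variables identity for Walsh integrals (the paper's \eqref{WI}), which together identify the shifted solution with the solution driven by the shifted noise. The only cosmetic difference is that the paper applies this directly to $u$ via the solution map $\mathbb{S}$ and uniqueness, whereas you run the same computation on the Picard iterates $u_n=\Phi_n(\eta)$ and pass to the $L^2$-limit.
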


\begin{proof}
	The fact that \eqref{SHE} has a strong solution is another way to state
	that the transformation $\eta\mapsto u$ defines canonically a
	``solution map'' $\mathbb{S}$ via
	$u=\mathbb{S}(\eta)$, where we recall $\eta$ denotes the driving
	noise. Recall also that the generalized Gaussian random field $\eta$ can be identified with a
	densely-defined isonormal Gaussian
	process $C_c(\R_+\times\R^d)\ni \varphi\mapsto\eta(\varphi)$
	via Wiener integrals as follows:
	\[
		\eta(\varphi) = \int_{\R_+\times\R^d}\varphi\,\d\eta
		\qquad\text{for all $\varphi \in \HH$},
	\]
    where $\HH$ is the Hilbert space introduced in \S\ref{sec:Malliavin}.
	Since $C_c(\R_+\times\R)\ni \varphi\mapsto\eta(\varphi)\in L^2(\Omega)$
	is a continuous linear mapping, the preceding identifies $\eta$ completely provided only
	that we prescribe $\eta(\varphi)$ for every $\varphi\in C_c(\R_+\times\R)$.
	In this way, we can  define a Gaussian noise $\eta_y$---one for every $y\in\R^d$---via
	\begin{equation}\label{xi_h}
		\eta_y(\varphi) = \int_{\R_+\times\R^d} \varphi(t\,,x-y)\,\eta(\d t\,\d x)
		\qquad\text{for all $\varphi\in C_c(\R_+\times\R^d)$.}
	\end{equation}
	It is easy to check covariances in order to see that $\eta_y(\varphi)$
	and $\eta(\varphi)$ have the same law; therefore, the noises
	$\eta$ and $\eta_y$ have the same law for every $y\in\R^d$. Also,
	it follows from the construction of the Walsh/It\^o stochastic integral that
	for all $t\ge0$, $x,y\in\R^d$, and Walsh-integrable random fields $\Psi$,
	\begin{equation}\label{WI}
		\int_{(0,t)\times\R^d}\Psi(s\,,z-y)\eta(\d s\,\d z)
		= \int_{(0,t)\times\R^d}\Psi(s\,,z)\,\eta_y(\d s\,\d z)
		\qquad\text{a.s.}
	\end{equation}
	This can be proved by standard approximation arguments, using only the fact that
	\eqref{WI} holds by \eqref{xi_h} when $\Psi$ is a simple random field;
	see Walsh \cite[Chapter 2]{Walsh}.
	
	Finally, we may combine \eqref{SHE} and \eqref{WI} in order to see that
	for all $t\ge0$ and $x,y\in\R^d$,
	\begin{align*}
		u(t\,,x+y) &= 1 + \int_{(0,t)\times\R^d} \bm{p}_{t-s}(x+y-z)
			\sigma(u(s\,,z-y+y))\,\eta(\d s\,\d z)\\
		&=  1 + \int_{(0,t)\times\R^d} \bm{p}_{t-s}(x-z)
			\sigma(u(s\,,z+y))\,\eta_y(\d s\,\d z)
			\qquad\text{a.s.}
	\end{align*}
	This proves that $u\circ\theta_y = \mathbb{S}(\eta_y)$ a.s.\ for every $y\in\R^d$, where
	we recall $\mathbb{S}$ denotes the solution map in \eqref{SHE}.
	Because $u$ is continuous,
	the preceding is another way to state the
	first assertion of the result. The second assertion follows from the first for elementary
	reasons.
\end{proof}

Let us mention also the following simple fact.

\begin{lemma}\label{lem:Var:erg}
	A stationary process $Y:=\{Y(x)\}_{x\in\R^d}$ is ergodic provided that
	\begin{equation}\label{cond:var}
		\lim_{N\to\infty}
		\Var\left( \fint_{[0,N]^d} \prod_{j=1}^k g_j(Y(x+\zeta^j))\,\d x\right) =0,
	\end{equation}
	for all integers $k\ge1$, every $\zeta^1,\ldots,\zeta^k\in\R^d$,
	and all Lipschitz-continuous functions $g_1,\ldots,g_k:\R\to\R$ that satisfy \eqref{eq:WLOG}.
\end{lemma}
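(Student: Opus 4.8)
The plan is to show that the hypothesis forces the $\sigma$-algebra $\mathcal I$ of shift-invariant events of $Y$ to be $\P$-trivial, which is exactly ergodicity. Let $\tau_x$ denote the spatial-shift operator, so that $Y(z)\circ\tau_x=Y(z+x)$; then for a functional of finitely many coordinates, say $F=\prod_{j=1}^k g_j(Y(\zeta^j))$, one has $F\circ\tau_x=\prod_{j=1}^k g_j(Y(x+\zeta^j))$, and the average in \eqref{cond:var} is precisely $\fint_{[0,N]^d}(F\circ\tau_x)\,\d x$. I take for granted the joint measurability of the process that makes such averages well-defined random variables (for $Y=u(t)$ this follows from $L^k(\Omega)$-continuity; see Remark~\ref{rem:doob}).

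First I would upgrade \eqref{cond:var} from normalized Lipschitz test functions to arbitrary \emph{bounded} Lipschitz ones. Given bounded Lipschitz $g_1,\dots,g_k$, after pulling out any constant factors we may assume $L_j:=\mathrm{Lip}(g_j)>0$ and write $g_j=L_j\widetilde g_j+c_j$ with $\widetilde g_j$ satisfying \eqref{eq:WLOG}. Expanding,
\[
	\prod_{j=1}^k g_j(Y(x+\zeta^j))=\sum_{S\subseteq\{1,\dots,k\}}\Big(\prod_{j\notin S}c_j\Big)\Big(\prod_{j\in S}L_j\Big)\prod_{j\in S}\widetilde g_j(Y(x+\zeta^j)),
\]
the $S=\varnothing$ term being constant and each $S\neq\varnothing$ term covered by the hypothesis. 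Since there are finitely many subsets $S$, Minkowski's inequality applied to the $L^2(\Omega)$-norms of the centered spatial averages yields \eqref{cond:var} for $(g_j)_{j\le k}$.

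Next I would run a functional monotone-class argument. Let $\mathcal H$ be the class of bounded, $\sigma(Y)$-measurable functionals $F$ with $\lim_{N\to\infty}\Var\big(\fint_{[0,N]^d}(F\circ\tau_x)\,\d x\big)=0$. By stationarity and the triangle inequality, $\mathcal H$ is a vector space containing the constants. It is closed under uniformly bounded monotone limits: if $F_n\uparrow F$ with $\sup_n\|F_n\|_\infty<\infty$ and $F_n\in\mathcal H$, then, using Minkowski's integral inequality together with stationarity in the form $\big\|\fint_{[0,N]^d}(G\circ\tau_x)\,\d x\big\|_2\le\|G\|_2$,
\[
	\Big\|\fint_{[0,N]^d}(F\circ\tau_x)\,\d x-\E F\Big\|_2\le\|F-F_n\|_2+\Big\|\fint_{[0,N]^d}(F_n\circ\tau_x)\,\d x-\E F_n\Big\|_2+|\E F_n-\E F|,
\]
so letting $N\to\infty$ and then $n\to\infty$ (dominated convergence controls $\|F-F_n\|_2$ and $\E F_n\to\E F$) gives $F\in\mathcal H$. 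By the previous paragraph, $\mathcal H$ contains the class $\mathcal M$ of all products $\prod_{j=1}^k g_j(Y(\zeta^j))$ with $g_j$ bounded Lipschitz; $\mathcal M$ is stable under pointwise multiplication (bounded Lipschitz functions form an algebra, and factors attached to coinciding points are merged), and $\sigma(\mathcal M)=\sigma(Y)$ since bounded Lipschitz functions generate $\mathcal B(\R)$. The functional monotone-class theorem then shows that $\mathcal H$ contains every bounded $\sigma(Y)$-measurable functional.

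Finally, let $A\in\mathcal I$ be a shift-invariant event. Then $\bm{1}_A\circ\tau_x=\bm{1}_A$ for every $x$, so $\fint_{[0,N]^d}(\bm{1}_A\circ\tau_x)\,\d x=\bm{1}_A$ and hence $\Var\big(\fint_{[0,N]^d}(\bm{1}_A\circ\tau_x)\,\d x\big)=\P(A)(1-\P(A))$ for every $N$. Since $\bm{1}_A\in\mathcal H$, this quantity vanishes in the limit, forcing $\P(A)\in\{0,1\}$; hence $\mathcal I$ is trivial and $Y$ is ergodic. The argument is standard in structure and I do not expect a serious obstacle: the only points requiring care are the measurability preliminaries that make the spatial averages well-defined and the routine bookkeeping in the monotone-class step. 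Note that no appeal to the mean ergodic theorem is needed, since the closure of $\mathcal H$ under bounded limits is obtained directly from Minkowski's integral inequality and stationarity.
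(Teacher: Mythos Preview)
Your argument is correct, and it reaches the same conclusion as the paper by a genuinely different route. Both proofs begin by upgrading the hypothesis from normalized Lipschitz test functions to arbitrary Lipschitz ones via the decomposition $g_j=\text{Lip}(g_j)\widetilde g_j+g_j(0)$. After that the two diverge. The paper specializes the extended hypothesis to sines and cosines, so that \eqref{cond:var} yields $L^2(\Omega)$-convergence of $\fint_{[0,N]^d}\exp\{i\sum_j z_jY(x+\zeta^j)\}\,\d x$ to its mean; it then invokes von Neumann's mean ergodic theorem to identify this same limit as $\E[\,\exp\{i\sum_j z_jY(\zeta^j)\}\mid\mathcal I\,]$, and Fourier inversion forces every finite-dimensional marginal of $Y$ to be independent of $\mathcal I$, whence $\mathcal I$ is independent of itself and trivial. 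Your approach instead runs a functional monotone-class argument to push \eqref{cond:var} from Lipschitz products to \emph{all} bounded $\sigma(Y)$-measurable functionals, and then applies it directly to the indicator of an invariant event, whose spatial average is itself.

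The trade-offs: the paper's route is shorter once the ergodic theorem is on the shelf, and the characteristic-function step is a clean way to pass from a generating class to all of $\sigma(Y)$ without building a monotone-class machine. Your route is more self-contained---it needs neither the mean ergodic theorem nor Fourier inversion, only the contraction $\|\fint(G\circ\tau_x)\,\d x\|_2\le\|G\|_2$ from stationarity and Minkowski---and it makes transparent that what is really being proved is $\Var(\fint F\circ\tau_x\,\d x)\to0$ for every bounded $F\in\sigma(Y)$. One small point worth tightening in your write-up: invariant events are only invariant up to null sets, so $\bm{1}_A\circ\tau_x=\bm{1}_A$ holds a.s.\ for each $x$; a Fubini step (joint measurability of $(\omega,x)\mapsto\bm{1}_A(\tau_x\omega)$) is what gives $\fint_{[0,N]^d}(\bm{1}_A\circ\tau_x)\,\d x=\bm{1}_A$ a.s.
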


\begin{proof}
	Suppose $g_1,\ldots,g_k:\R\to\R$ are non-constant,
	Lipschitz-continuous functions, but do not necessarily satisfy
	\eqref{eq:WLOG}. We first verify that \eqref{cond:var} holds
	for these $g_i$'s as well. Indeed, define
	\[
		\widetilde{g}_j(w) := \frac{g_j(w)-g_j(0)}{\text{\rm Lip}(g_j)}
		\qquad\text{for all $j=1,\ldots,k$ and $w\in\R$},
	\]
	and observe that $\widetilde{g}_1,\ldots,\widetilde{g}_k:\R\to\R$
	satisfy \eqref{eq:WLOG}, and hence \eqref{cond:var} holds
	when we replace every $g_i$ with $\tilde{g}_i$. It is easy to see that
	\begin{equation}\label{VarVar}\begin{split}
		&\fint_{[0,N]^d} \prod_{j=1}^k g_j(Y(x+\zeta^j))\,\d x\\
		&\hskip1.5in= \sum_{E\subseteq\{1,\ldots,k\}}
			\prod_{l\in E} g_l(0) \fint_{[0,N]^d} \prod_{j\in\{1,\ldots,k\}\setminus E}
			\text{\rm Lip}(g_j)\,\widetilde{g}_j(Y(x+\zeta^j))\,\d x,
	\end{split}\end{equation}
	where a product over the empty set is identically defined as $1$.
	For example, when $k=2$, we have
	\begin{align*}
		&\fint_{[0,N]^d} g_1(Y(x+\zeta^1))g_2(Y(x+\zeta^2))\,\d x\\
		&\hskip1in= \fint_{[0,N]^d} \left[\text{\rm Lip}(g_1)\,\widetilde{g}_1(Y(x+\zeta^1))
			+ g_1(0)\right]\left[\text{\rm Lip}(g_2)\,\widetilde{g}_2(Y(x+\zeta^2))
			+ g_2(0)\right]\d x,
	\end{align*}
	which yields \eqref{VarVar} upon expanding the product inside the integral.
	
	Minkowski's inequality ensures that, for all random variables $X_1,\ldots,X_M\in L^2(\Omega)$,
	\[
		\Var(X_1+\cdots+X_M) \le \left(\sum_{i=1}^M \sqrt{\Var(X_i)}\right)^2
		\le M^2 \max_{1\le i\le M}\Var(X_i).
	\]
	Thus, we see from \eqref{VarVar} that
	\begin{align*}
		&\Var\left(\fint_{[0,N]^d} \prod_{j=1}^k g_j(Y(x+\zeta^j))\,\d x\right)\\
		&\le4^k  \max_{E\subseteq\{1,\ldots,k\}}
			\prod_{l\in E} g_l^2(0)\cdot
			\Var\left(\fint_{[0,N]^d} \prod_{j\in\{1,\ldots,k\}\setminus E}
			\text{\rm Lip}(g_j)\,\widetilde{g}_j(Y(x+\zeta^j))\,\d x\right)\\
		&\to0\quad\text{as $N\to\infty$},
	\end{align*}
	thanks to \eqref{cond:var}. This proves the assertion that if \eqref{cond:var}
	holds when $g_i$'s are Lipschitz and satisfy \eqref{eq:WLOG}, then
	\eqref{cond:var} continues to hold for non-constant, Lipschitz-continuous $g_i$'s, even when
	they do not satisfy \eqref{eq:WLOG}. And it is easy to see that ``non-constant'' can
	be removed from the latter assertion without changing its truth: We merely factor out
	of the variance the constant $g_i$'s, and relabel the remaining $g_j$'s, thus reducing the problem
	to the non-constant case.
	
	We now apply the preceding with $g_i$'s replaced with sines and cosines, in order
	to deduce from   stationarity that
	\[
		\lim_{N\to\infty}
		\fint_{[0,N]^d}\exp\left\{i\sum_{j=1}^k z_jY(x+\zeta^j)\right\}\d x
		=\E\left[\exp\left\{i\sum_{j=1}^kz_jY(\zeta^j)\right\}
		\right]\qquad\text{in $L^2(\Omega)$},
	\]
	for all $z_1,\ldots,z_k\in\R$ and $\zeta^1,\ldots,\zeta^k\in\R^d$.
	On the other hand, von-Neumann's $L^2$ version of the
	ergodic theorem \cite{Peterson} tells us that
	\[
		\lim_{N\to\infty}
		\fint_{[0,N]^d}\exp\left\{i\sum_{j=1}^k z_jY(x+\zeta^j)\right\}\d x
		= \E\left[ \left. \exp \left\{i\sum_{j=1}^k z_jY(\zeta^j)\right\}
		\ \right|\, \mathcal{I}\right]
		\qquad\text{in $L^2(\Omega)$,}
	\]
	where $\mathcal{I}$ denotes the invariant $\sigma$-algebra
	of $Y$. Equate the preceding two displays, and apply the inversion theorem of
	Fourier transforms, in order to see that
	every random vector of the form
	$(Y(\zeta^1)\,,\ldots,Y(\zeta^k))$ is independent of $\mathcal{I}$.
	This implies that $\mathcal{I}$ is independent of the $\sigma$-algebra generated
	by $Y$, and in particular $\mathcal{I}$ is independent of itself. This in turn
	proves the result.
\end{proof}

\section{Proofs of Theorem \ref{th:1} (part 1),
	Theorems \ref{th:main:intro} and \ref{th:Poincare}, and Corollary \ref{co:main:intro}}\label{sec:Poincare}

We are ready to begin the proof of the Poincar\'e-type inequalities
of Theorems \ref{th:Poincare} and \ref{th:Poincare:bis}. Then we will show that,
among other things, our Poincar\'e-type inequalities imply the desired
spatial ergodicity of $u$.

\begin{proof}[Proof of Theorems \ref{th:Poincare} and \ref{th:Poincare:bis}]
	We shall prove Theorems \ref{th:Poincare} and \ref{th:Poincare:bis}
	at once, since one argument follows the other after we make small adjustments.
	
	Define
	\begin{equation}\label{V:G}
		V_N := {\rm Var} \left (\fint_{[0,N]^d}  \mathcal{G}(x) \d x \right)
		\quad\text{and}\quad
		\mathcal{G}(x) :=
		\prod_{j=1}^k g_j ( u(t\,,x+\zeta^j))\qquad
		\text{for all $x\in\R^d$},
	\end{equation}
	so that $V_N =
	\fint_{[0, N]^{2d}}  {\rm Cov} (  \mathcal{G}(x) \,, \mathcal{G}(y))\, \d x \d y.$
	We plan to calculate ${\rm Cov} (  \mathcal{G}(x)\,, \mathcal{G}(y))$,
	pointwise, using the Clark--Ocone formula (Proposition \ref{Clark-Ocone}). To
	this end, we apply the  chain rule
	for the Malliavin derivative \cite[Proposition 1.2.4]{Nualart}
	in order to see that
	\[
		D_{s,z} \mathcal{G}(x) = \sum_{j_0=1}^k
		\left(\prod_{\substack{j=1\\j \not= j_0}}^{k}
		g_j \left( u(t\,,x+\zeta^j) \right) \right)
		g_{j_0}' \left(u(t\,,x+\zeta^{j_0})\right)
		D_{s,z} u(t\,, x+ \zeta^{j_0}).
	\]
	The covariance structure of $\eta$ [see \eqref{Cov:eta}]
	and Proposition \ref{Clark-Ocone} together ensure that,
	when $f$ is additionally a \emph{function},
	\begin{align*}
		\left| {\rm Cov} (  \mathcal{G}(x)\,, \mathcal{G}(y)) \right|
			&=  \left|\int_0^t \d s\int_{\R^d}\d z\int_{\R^d}\d w\
			\E \left\{ \E \left( D_{s,z} \mathcal{G}(x) \mid \mathcal{F}_s \right)
			\cdot
			\E \left( D_{s,w} \mathcal{G}(y) \mid \mathcal{F}_s \right) \right\}  f(z - w) \right| \\
		&\le  \int_0^t \d s\int_{\R^d}\d z\int_{\R^d}\d w\
			\left\| D_{s,z} \mathcal{G}(x)  \right\|_2
			\left\| D_{s,w} \mathcal{G}(y) \right\|_2
			\bar{\bf f}(z - w),
	\end{align*}
	whence
	\[
		V_N \le \fint_{[0,N]^{2d}}\d x\d y
		\int_0^t \d s\int_{\R^d}\d z\int_{\R^d}\d w\
		\left\| D_{s,z} \mathcal{G}(x)  \right\|_2
		\left\| D_{s,w} \mathcal{G}(y) \right\|_2
		\bar{\bf f}(z - w),
	\]
	where $\bar{\bf f}$ is defined in the statement of Theorem \ref{malliavin_derivative},
	and is a \emph{function}. If $f$ is a measure, then we can adapt the preceding. Since we have
	said this sort of thing before in this paper, without mentioning how to adapt, we make the adaptation
	to the measure case now by merely observing that, in general, the preceding covariance bound
	gets adapted to the following, and for the same reasons as above:
	\begin{equation}\label{eq:V_N}
		V_N\le\fint_{[0,N]^{2d}}\d x\d y
		\int_0^t\left\langle \psi(s\,,x\,,\bullet) \,, \psi(s\,,y\,,\bullet)*\bar{\bf f}
		\right\rangle_{L^2(\R^d)}\,\d s,
	\end{equation}
	where $\psi(s\,,x\,,z):= \|D_{s,z}\mathcal{G}(x)\|_2$.
	In any case, Theorems \ref{th:exist} and \ref{malliavin_derivative} together imply
	the existence of a real number $c=c(T\,,k)$ such that
	\begin{equation}\label{psi:UB}\begin{split}
		\| D_{s,z} \mathcal{G}(x) \|_2
			&\le  \sum_{j_0=1}^k  \left(\prod_{j=1, j \not= j_0}^{k}
			\|  g_j ( u(t\,,x+\zeta^j))  \|_{2k} \right)  \| D_{s,z} u(t\,, x+ \zeta^{j_0})\|_{2k} \\
		&\le c\sum_{j=1}^k  \bm{p}_{t-s} (x+ \zeta^{j} -z),
	\end{split}\end{equation}
	uniformly for all $0<s<t\le T$ and $x,z\in\R^d$. Recall the probability density
	function $I_N$ from \eqref{I_N}. The preceding  can be now combined with the Tonelli
	theorem and the semigroup property of the heat kernel in order to yield
	\begin{align*}
		V_N&\le c^2\sum_{j,\ell=1}^k
			\fint_{[0, N]^{2d}}\d x\d y   \int_0^t \d s \
			\left\langle \bm{p}_{t-s}(x+\zeta^j-\bullet)\,,
			\bm{p}_{t-s}(y+\zeta^\ell-\bullet)*\bar{\bf f}\right\rangle_{L^2(\R^d)}\\
		&= c^2\sum_{j,\ell=1}^k
			\int_0^t \left( \bm{p}_{2(t-s)} * I_N * \tilde{I}_N *\bar{\bf f}\right)
			\left( \zeta^j-\zeta^\ell\right)\d s.
	\end{align*}
	Since $I_N*\tilde{I}_N\in C_c(\R^d)$ and $I_N*\tilde{I}_N$ is
	nonnegative definite, the function
	$I_{N}*\tilde{I}_{N}*\bar{\bf f}$ is continuous and nonnegative-definite,
	whence also maximized at $0$. Because the total integral of $\bm{p}_{2(t-s)}$ is one,
	it follows that
	\[
		V_N\le
		c^2k^2t  (  I_N * \tilde{I}_N *\bar{\bf f})(0)
		\le c^2k^2t \,\frac{\bar{\bf f}( [-N\,,N]^d)}{N^d};
	\]
	see \eqref{III} for the last inequality. This completes the proof of
	\eqref{eq:Poincare} when $f$ satisfies Dalang's condition \eqref{Dalang},
	as well as the proof of \eqref{eq:Poincare:bis} when $f$ satisfies \eqref{f=h*h}
	for some $h\in\cup_{p>1}\mathcal{G}_p(\R^d)$.
\end{proof}

We can now prove the remaining results from the Introduction.

\begin{proof}[Proof of part 1 of Theorem \ref{th:1}]
	Parts 2 and 3 of Theorem \ref{th:1} were proved respectively in
	\S\ref{sec:harmonic_3.3} and \S\ref{sec:part3}. We now conclude the proof
	of Theorem \ref{th:1} by verifying its first part. With this in mind, suppose
	$f$ satisfies \eqref{Dalang} and $\hat{f}\{0\}=0$, equivalently,
	\[
		\lim_{N\to\infty} \frac{f\left( [-N\,,N]^d\right)}{N^d}=0,
	\]
	thanks to part 2 of Theorem \ref{th:1}, which has already been established.
	According to the above hypothesis and Theorem \ref{th:Poincare},
	\begin{equation}\label{Var->0}
		\lim_{N\to\infty}\Var\left( \fint_{[0,N]^d}
		\prod_{j=1}^k g_j\left( u(t\,,x+\zeta^j)\right)\d x\right)=0,
	\end{equation}
	for all $t>0$, $\zeta^1,\ldots,\zeta^k\in\R^d$,
	and all Lipschitz functions $g_1,\ldots,g_k:\R\to\R$ that satisfy \eqref{eq:WLOG}.
	Lemma \ref{lem:Var:erg} now implies that $u$ is spatially ergodic, and
	concludes the proof of part 1 of Theorem \ref{th:1}.
\end{proof}

\begin{proof}[Proof of Theorem \ref{th:main:intro}]
	As was the case also in the  proof of Theorem \ref{th:1},
	the asserted stationarity of the solution has been proved earlier in Lemma \ref{lem:stat}.
	Now suppose $f$ satisfies \eqref{f=h*h}
	for some $h\in\cup_{p>1}\mathcal{G}_p(\R^d)$. Proposition \ref{pr:PD}
	tells us that $|h|*|\tilde{h}|$ is a function of positive type; thus, it vanishes
	at infinity among other things. This immediately yields
	\[
		\lim_{N \rightarrow \infty}
		\fint_{[-N, N]^d}\left(|h|*|\tilde{h}|\right)(x)\,\d x = 0,
	\]
	and hence \eqref{Var->0} (see Theorem \ref{th:Poincare:bis}). An appeal to
	Lemma \ref{lem:Var:erg} ends the proof of Theorem \ref{th:main:intro}.
\end{proof}

Finally, we verify Corollary \ref{co:main:intro}.
The proof is elementary. We include it here
however since the proof depends crucially on careful computation of the various exponents
in \eqref{h:1}--\eqref{exponent:alpha} below.

\begin{proof}[Proof of Corollary \ref{co:main:intro}]
	If $h\in L^2(\R^d)$ then we set $p=q=2$ to see that
	$h\in L^p_{\text{\it loc}}(\R^d)$ and
	\[
		\int_0^1\left( \|h\|_{L^p(\mathbb{B}_r)} \|h\|_{L^q(\mathbb{B}_r^c)}
		+ \|h\|_{L^2(\mathbb{B}_r^c)}^2\right)\omega_d(r)\,\d r
		\le 2\|h\|_{L^2(\R^d)}^2\int_0^1\omega_d(r)\,\d r,
	\]
	so that \eqref{cond:omega} holds thanks to the local integrability of $\omega_d$.
	Thus, it remains to assume that \eqref{cond:co:main:intro} holds. In that case,
	we appeal to \eqref{cond:co:main:intro} and integrate in spherical coordinates
	in order to see that
	\[
		\int_{\mathbb{B}_r}|h(x)|^p\,\d x \lesssim
		\int_0^r s^{d-1 - p(d+\alpha)/2}\,\d s\quad\text{%
		simultaneously for every $r\in(0\,,1)$.}
	\]
	Hence,
	\[
		h\in L^p_{\textit{\it loc}}(\R^d)\quad\text{iff}\quad
		p<\frac{2d}{d+\alpha}.
	\]
	Since $\alpha<d$, it follows that $2d/(d+\alpha)>1$
	and hence $h\in L^p_{\text{\it loc}}(\R^d)$ for every $p$ between
	$1$ and $2d/(d+\alpha)$.
	For every such $p$, \eqref{cond:co:main:intro} ensures that
	\begin{equation}\label{h:1}
		\|h\|_{L^p(\mathbb{B}_r)} \lesssim r^{(d/p)-(d+\alpha)/2}\quad\text{%
		simultaneously for every $r\in(0\,,1)$.}
	\end{equation}
	Choose one such $p$ and define $q:=p/(p-1)$, so that $p^{-1}+q^{-1}=1$.
	Eq.\ \eqref{cond:co:main:intro} implies that, for every $r\in(0\,,1)$,
	\begin{align*}
		\int_{\mathbb{B}_r^c} |h(x)|^q\,\d x
			&\le \int_{r<\|x\|<1}|h(x)|^q\,\d x + \int_{\|x\|>1}|h(x)|^q\,\d x\\
		&\lesssim\int_r^1 t^{d-1-q(d+\alpha)/2}\,\d t + \int_1^\infty t^{d-1-q(d+\beta)/2}\,\d t,
	\end{align*}
	where the implied constants do not depend on $r\in(0\,,1)$.
	The first integral is convergent regardless of the choice of $p$ (hence also $q$). The second integral
	converges iff
	\begin{equation}\label{qd}
		q > \frac{2d}{d+\beta},
	\end{equation}
	which can certainly be arranged if $p$ were chosen sufficiently
	close to $1$.\footnote{%
		To be concrete, we may select $1< p < d/(d-1)$
		to ensure that $q>d$, so that \eqref{qd} holds.
		}
	Choose and fix $p>1$ sufficiently close to $1$ in order to ensure that
	\eqref{qd} holds, whence
	\begin{equation}\label{h:2}
		\|h\|_{L^q(\mathbb{B}_r^c)} \lesssim r^{(d/q)-(d+\alpha)/2}\quad\text{%
		simultaneously for every $r\in(0\,,1)$.}
	\end{equation}
	Finally, we may repeat
	the preceding with $q$ replaced everywhere with $2$ in order to see that
	\begin{equation}\label{h:3}
		\|h\|_{L^2(\mathbb{B}_r^c)}
		\lesssim r^{-\alpha/2}\quad\text{%
		simultaneously for every $r\in(0\,,1)$.}
	\end{equation}
	We may now combine \eqref{h:1}, \eqref{h:2}, and \eqref{h:3}
	in order to see that,
	\begin{align}\label{exponent:alpha}
		\|h\|_{L^p(\mathbb{B}_r)}\|h\|_{L^q(\mathbb{B}_r^c)} +
		\|h\|_{L^2(\mathbb{B}_r^c)}^2 \lesssim r^{-\alpha}\quad\text{%
		simultaneously for every $r\in(0\,,1)$.}
	\end{align}
	Because $\alpha<2\wedge d$, it follows that
	$h\in\mathcal{G}_p(\R^d)$ for all $p$ sufficiently close to $1$.
\end{proof}

\section{Applications}\label{sec:Appl}

The Poincar\'e-type inequalities of Theorems \ref{th:Poincare} and \ref{th:Poincare:bis}
have many consequences other than those mentioned in Theorems \ref{th:1}
and \ref{th:main:intro}. We conclude the paper by presenting two rather different
applications of these Poincar\'e-type inequalities.

\subsection{Spatial mixing}\label{subsec:mixing}
We say that $u$ is \emph{spatially mixing} if the random field
$u(t)$ is (weakly) mixing for every $t>0$ \cite{DymMcKean,Maruyama,Peterson}.
Recall that this means that
\begin{equation}\label{mixing}
	\lim_{\|x\|\to\infty}\Cov \left[ \prod_{j=1}^k g_j\left( u(t\,,x + \zeta^{j}) \right) ~,~
	\prod_{l=1}^k g_l\left( u(t\,,\zeta^{l})\right) \right] =0,
\end{equation}
for all integers $k\ge 1$, real numbers $t>0$, $\zeta^1,\ldots,\zeta^k\in\R^d$,
and functions $g_1,\ldots,g_k$ of the form
$g_j(w) = \bm{1}_{(-\infty,a_j]}(w)$ for $w\in\R$
and arbitrary $a_1,\ldots,a_k\in\R$. Our next result finds  unimprovable conditions for
spatial mixing of the solution to \eqref{SHE}. When $d=1$ and $\sigma\equiv$ constant,
our condition is  sharp, and in agreement with classical results of Maruyama
\cite{Maruyama} on mixing properties of stationary Gaussian processes.

\begin{corollary}\label{mix1}
	Suppose $f$ satisfies Dalang's condition \eqref{Dalang}.
	Then, $u$ is spatially mixing if
	\begin{equation}\label{cond:mix}
		\lim_{\|x\|\to\infty}\left( \bm{v}_\lambda*f\right)(x)=0,
		\quad\text{equivalently\ \ if}\quad
		\lim_{\|x\|\to\infty}
		\int_{\R^d}\frac{\e^{ix\cdot z}}{2\lambda+\|z\|^2}\,\hat{f}(\d z)=0,
	\end{equation}
	for some, hence all, $\lambda>0$. Moreover, \eqref{cond:mix} is a necessary and sufficient condition for
	the spatial mixing of $u$ in the case that $\sigma$ is a constant.
\end{corollary}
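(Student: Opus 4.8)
The plan is to re-run the Clark--Ocone covariance estimate behind Theorem~\ref{th:Poincare}, but with the two competing products anchored at a pair of faraway points instead of averaged over a box. First I would dispose of the bookkeeping in \eqref{cond:mix}: putting $\phi(y):=\bm v_\lambda(x-y)$ and using Parseval's identity \eqref{Fourier_transform} with the explicit transform $\hat{\bm v}_\lambda(z)=2(2\lambda+\|z\|^2)^{-1}$ gives $(\bm v_\lambda*f)(x)=\int_{\R^d}2(2\lambda+\|z\|^2)^{-1}\e^{ix\cdot z}\,\hat f(\d z)$, so the two conditions differ only by the harmless factor $2$; and they hold for one $\lambda$ iff for all $\lambda$ because of the resolvent identity $\bm v_\lambda-\bm v_{\lambda'}=(\lambda'-\lambda)\bm v_\lambda*\bm v_{\lambda'}$, convolving a $C_0(\R^d)$ function with the integrable kernel $\bm v_{\lambda'}$ staying in $C_0(\R^d)$.

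For sufficiency, fix $t>0$ and, for Lipschitz functions $g_j$ normalized by \eqref{eq:WLOG}, set $\mathcal G(x):=\prod_{j=1}^k g_j(u(t\,,x+\zeta^j))$ and $\mathcal G_0:=\prod_{l=1}^k g_l(u(t\,,\zeta^l))$. Exactly the computation in the proof of Theorem~\ref{th:Poincare} --- the Clark--Ocone formula (Proposition~\ref{Clark-Ocone}), the Malliavin chain rule, Jensen to drop the conditional expectations, the derivative bound \eqref{L^p_norm}, and the semigroup property of $\bm p$ --- produces a constant $c=c(t,k)$ with
\[
  \left|\Cov\!\left(\mathcal G(x)\,,\mathcal G_0\right)\right|\le c^2\sum_{j,\ell=1}^k\int_0^t\left(\bm p_{2r}*f\right)\!\left(x+\zeta^j-\zeta^\ell\right)\d r .
\]
Since $f\ge0$ and $\int_0^{2t}\bm p_u\,\d u\le\e^{2\lambda t}\int_0^{2t}\e^{-\lambda u}\bm p_u\,\d u\le\e^{2\lambda t}\bm v_\lambda$ pointwise, the right side is at most $\tfrac12 c^2\e^{2\lambda t}k^2\max_{j,\ell}(\bm v_\lambda*f)(x+\zeta^j-\zeta^\ell)$, which tends to $0$ as $\|x\|\to\infty$ by \eqref{cond:mix}. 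Dropping the normalization \eqref{eq:WLOG} is the same affine rescaling used in the proof of Lemma~\ref{lem:Var:erg}, and the passage to indicators $g_j=\bm1_{(-\infty,a_j]}$ is a sandwich argument: approximate each $g_j$ from above and below by Lipschitz functions differing only on $[a_j-\varepsilon\,,a_j+\varepsilon]$ and control the error via stationarity and the non-atomicity of the law of $u(t\,,0)$ for $t>0$. This gives \eqref{mixing}.

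For the converse when $\sigma\equiv c_0$ is a nonzero constant, $u(t)$ is stationary Gaussian, so by the classical criterion (Maruyama~\cite{Maruyama}; Dym--McKean~\cite{DymMcKean}) $u(t)$ is mixing iff $\rho_t(x):=\Cov(u(t\,,0)\,,u(t\,,x))\to0$ as $\|x\|\to\infty$. From \eqref{addive_noise} and \eqref{Cov:eta} one computes $\rho_t(x)=c_0^2\int_0^t(\bm p_{2s}*f)(x)\,\d s$, so the implication that \eqref{cond:mix} forces mixing is the $k=1$ instance of the bound above. Conversely, a Fubini computation yields the identity
\[
  \left(\bm v_\lambda*f\right)(x)=\frac{4\lambda}{c_0^2}\int_0^\infty\e^{-2\lambda t}\rho_t(x)\,\d t\qquad(\lambda>0).
\]
If $u$ is spatially mixing then $\rho_t(x)\to0$ for every $t>0$, and since $0\le\rho_t(x)\le\rho_t(0)$ with $\int_0^\infty\e^{-2\lambda t}\rho_t(0)\,\d t=\tfrac{c_0^2}{4\lambda}(\bm v_\lambda*f)(0)<\infty$ by Dalang's condition, dominated convergence forces $(\bm v_\lambda*f)(x)\to0$, which is \eqref{cond:mix}.

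The analytic chain is essentially a rerun of Theorem~\ref{th:Poincare}, so the real obstacle is the last-mile step: converting the Lipschitz covariance bound into the honest mixing statement \eqref{mixing} for indicator test functions, which requires a clean justification that $u(t\,,0)$ has no atoms for $t>0$ (a standard Malliavin/Bouleau--Hirsch input). The only genuinely new computation is the Laplace-transform identity relating the whole family $\{\rho_t\}_{t>0}$ to $\bm v_\lambda*f$; it is what rescues the necessity direction, since $\rho_t$ for a single $t$ does not dominate $\bm v_\lambda*f$.
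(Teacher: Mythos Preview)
Your proposal is correct and, for the sufficiency direction and the equivalence of the two formulations in \eqref{cond:mix}, follows essentially the same route as the paper: Clark--Ocone plus the Malliavin derivative bound \eqref{L^p_norm} to control $\Cov(\mathcal G(x),\mathcal G_0)$ by $\sum_{j,\ell}\int_0^t(\bm p_{2s}*f)(x+\zeta^j-\zeta^\ell)\,\d s$, then the exponential insertion to pass to $\bm v_\lambda*f$.

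The necessity argument (constant $\sigma$) is where you genuinely diverge. The paper never writes your Laplace-transform identity; instead it splits $\int_0^\infty\e^{-\lambda s}(\bm p_{2s}*f)(x)\,\d s$ at a finite level $t$, uses $\rho_t(x)\ge c_0^2\int_0^t\e^{-\lambda s}(\bm p_{2s}*f)(x)\,\d s$ together with $\rho_t(x)\to0$ to kill the near part, bounds the tail $\int_t^\infty$ crudely by $\e^{-\lambda t}\int_{\R^d}(\lambda+\|z\|^2)^{-1}\,\hat f(\d z)$, and lets $t\to\infty$. Your formula $(\bm v_\lambda*f)(x)=\tfrac{4\lambda}{c_0^2}\int_0^\infty\e^{-2\lambda t}\rho_t(x)\,\d t$ with dominated convergence (the dominator $\rho_t(0)$ being integrable by Dalang's condition) is a cleaner packaging of the same information and avoids the split-and-limit step; the paper's version, on the other hand, makes it transparent exactly which piece of the resolvent each fixed $\rho_t$ controls.

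One minor point: your concern about needing non-atomicity of the law of $u(t,0)$ to pass from Lipschitz to indicator test functions is misplaced. Mixing is an $L^2$ statement, and bounded Lipschitz cylinder functionals are dense in $L^2$ of the $\sigma$-algebra generated by $u(t)$; the paper invokes this reduction in one sentence without any regularity input on the law of $u(t,0)$. So that ``last-mile'' obstacle is not actually there.
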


We pause and briefly examine condition \eqref{cond:mix} before we prove the corollary.

\begin{example}
	If the spectral measure $\hat{f}$ is a function,
	then Dalang's condition \eqref{Dalang} and the classical
	Riemann--Lebesgue lemma of Fourier analysis together guarantee that the second
	formulation in condition \eqref{cond:mix}
	holds. Thus, $u$ is spatially mixing whenever
	the underlying noise has a spectral density that satisfies Dalang's condition.
\end{example}

\begin{example}
	If $f$ is a function that satisfies Dalang's condition \eqref{Dalang} as well as parts
	1 and 2 of Definition \ref{def:PT}, then the proof of our next corollary can be
	easily adapted\footnote{Basically, one replaces the function $|h|*|\tilde{h}|$
	everywhere in the proof of Corollary \ref{mix2}
	by the function $f$.} in order to prove that the first condition
	in \eqref{cond:mix} holds. In particular, $u$ is spatially mixing
	provided that the correlation $f$
	is a function of positive type that satisfies Dalang's condition; and
	in fact condition
	3 of Definition \ref{def:PT} is not needed for mixing to hold.
\end{example}

\begin{proof}[Proof of Corollary \ref{mix1}]
	We can approximate every
	$\bm{1}_{(-\infty,a_j]}$ from above and below by Lipschitz-continuous
	functions in order to see that $u$ is spatially
	mixing if and only if \eqref{mixing} holds for all $k\ge 1$,
	real numbers $t>0$, $\zeta^1,\ldots,\zeta^k\in\R^d$,
	and Lipschitz-continuous functions $g_1,\ldots,g_k:\R\to\R.$
	In other words, it suffices to prove that
	\begin{equation}\label{enough}
		\lim_{\|x\|\to\infty}\Cov\left( \mathcal{G}(x)\,,\mathcal{G}(0)\right)=0,
	\end{equation}
	where $\mathcal{G}$ is the random field was defined in \eqref{V:G},
	and where the functions $g_1,\ldots,g_k$ therein are Lipschitz continuous.
	We may, and will, assume further and without loss in generality that
	$g_1,\ldots,g_k$ satisfy \eqref{eq:WLOG}. This can be justified using
	an argument that appeared earlier in the proof of
	Lemma \ref{lem:Var:erg}.
	
	We now use Theorem \ref{th:Poincare}, in exactly the same manner that
	was used to derive \eqref{eq:V_N}, in order to find that for every $x\in\R^d$,
	and for $\psi(s\,,x\,,z) := \|D_{s,z}\mathcal{G}(x)\|_2$,
	\begin{align*}
		\Cov\left( \mathcal{G}(x)\,,\mathcal{G}(0)\right) &\le
			\int_0^t\left\langle\psi(s\,,x\,,\bullet)\,,
			\psi(s\,,0\,,\bullet)*f\right\rangle_{L^2(\R^d)}\d s\\
		&\le c^2\sum_{j,\ell=1}^k\int_0^t\left\langle \bm{p}_{t-s}(x+\zeta^j-\bullet)
			\,,\bm{p}_{t-s}(\zeta^\ell-\bullet) * f\right\rangle_{L^2(\R^d)}\d s,
	\end{align*}
	for the same constant $c>0$ that appeared in \eqref{psi:UB}. The semigroup
	property of the heat kernel now yields
	\begin{align*}
		\Cov\left( \mathcal{G}(x)\,,\mathcal{G}(0)\right)
			&\le c^2\sum_{j,\ell=1}^k\int_0^t\left( \bm{p}_{2s}*f\right)
			\left(x+\zeta^j-\zeta^\ell\right)\d s\\
		&\le c^2\e^{2\lambda t}\sum_{j,\ell=1}^k\int_0^t
			\e^{-2\lambda s}\left( \bm{p}_{2s}*f\right)
			\left(x+\zeta^j-\zeta^\ell\right)\d s\\
		&\le \frac{c^2\e^{2\lambda t}}{2}\sum_{j,\ell=1}^k\left(
			\bm{v}_\lambda*f\right)\left(x+\zeta^j-\zeta^\ell\right).
	\end{align*}
	This demonstrates that the first condition in \eqref{cond:mix}  ensures \eqref{enough},
	and completes the proof of spatial mixing of $u$. Next, we verify that
	the two conditions in \eqref{cond:mix} are equivalent.
	
	Because $\bm{p}_s\in\mathscr{S}(\R^d)$ for every $s>0$,
	\[
		( \bm{p}_s*f)(x) = \frac{1}{(2\pi)^d}\int_{\R^d} \e^{ix\cdot z-s\|z\|^2/2}\hat{f}(\d z)
		\qquad\text{for all $x\in\R^d$}.
	\]
	Multiply both sides by $\exp(-\lambda s)$ and integrate $[\d s]$ to find that
	\[
		(\bm{v}_\lambda*f)(x) \propto \int_{\R^d}
		\frac{\e^{ix\cdot z}}{2\lambda+\|z\|^2}\,\hat{f}(\d z)
		\qquad\text{for all $\lambda>0$ and $x\in\R^d$}.
	\]
	
	In order to complete the proof, suppose $\sigma\equiv c_0$ for some
	$c_0>0$, and assume that $u$ is spatially mixing. Among other things,
	we can specialize \eqref{enough} to deduce that for every $t>0$,
	\begin{equation}\label{Cov-u}
		\lim_{\|x\|\to\infty}\Cov\left[ u(t\,,x) \,, u(t\,,0)\right]=0.
	\end{equation}
	But \eqref{addive_noise} and  \eqref{Cov:eta} together imply that
	for every $x\in\R^d$ and $t,\lambda>0$,
	\begin{align*}
		\Cov\left[ u(t\,,x) \,, u(t\,,0)\right] &= c_0^2
			\int_0^t\left\langle \bm{p}_s(x+\bullet)\,,\bm{p}_s*f\right\rangle_{L^2(\R^d)}\d s\\
		&= c_0^2\int_0^t \left( \bm{p}_{2s}*f\right)(x)\,\d s
			\ge c_0^2\int_0^t \e^{-\lambda s}\left( \bm{p}_{2s}*f\right)(x)\,\d s.
	\end{align*}
	On one hand, the above and \eqref{Cov-u} together
	tell us that, for every $t>0$,
	\begin{equation}\label{cr}
		\int_0^t \e^{-\lambda s}\left(\bm{p}_{2s}*f\right)(x)\,\d s\to 0
		\qquad\text{as $\|x\|\to\infty$}.
	\end{equation}
	On the other hand,
	\begin{align*}
		\int_t^\infty \e^{-\lambda s}\left(\bm{p}_{2s}*f\right)(x)\,\d s
			&= \frac{1}{(2\pi)^d}\int_t^\infty \e^{-\lambda s}\,\d s
			\int_{\R^d}\hat{f}(\d z)\ \e^{ix\cdot z-s\|z\|^2}\\
		&\propto\int_{\R^d}
			\frac{\e^{ix\cdot z - t(\lambda  +\|z\|^2)}}{\lambda  + \|z\|^2}
			\,\hat{f}(\d z),
	\end{align*}
	which leads to the following crude bound, valid uniformly for all $t>0$:
	\[
		\int_t^\infty \e^{-\lambda s}\left( \bm{p}_{2s}*f\right)(x)\,\d s
		\lesssim\e^{-\lambda t}\int_{\R^d}\frac{\hat{f}(\d z)}{\lambda +\|z\|^2}.
	\]
	Combine this bound with \eqref{cr} in order to see that
	\[
		\limsup_{\|x\|\to\infty}\int_0^\infty\e^{-\lambda s}\left( \bm{p}_{2s}*f\right)(x)\,\d s
		\lesssim\e^{-\lambda t}\int_{\R^d}\frac{\hat{f}(\d z)}{\lambda +\|z\|^2}
		\qquad\text{for every $t>0$}.
	\]
	Let $t\to\infty$ and appeal to Dalang's condition \eqref{Dalang}
	in order to see that
	the left-hand side is zero for every $\lambda>0$. This concludes the proof.
\end{proof}

\begin{corollary}\label{mix2}
	If $f$ satisfies \eqref{f=h*h} for some $h\in\cup_{p>1}\mathcal{G}_p(\R^d)$,
	then $u$ is spatially mixing.
\end{corollary}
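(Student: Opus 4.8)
The plan is to run the proof of Corollary \ref{mix1} essentially verbatim, with Theorem \ref{th:Poincare} replaced throughout by Theorem \ref{th:Poincare:bis}; the only genuinely new ingredient is an at-infinity decay estimate for $\bm{p}_{2s}*\bar{\bf f}$, where $\bar{\bf f}:=|h|*|\tilde{h}|$. As in the proof of Corollary \ref{mix1}, one first reduces spatial mixing to the assertion \eqref{enough}, namely $\lim_{\|x\|\to\infty}\Cov(\mathcal{G}(x)\,,\mathcal{G}(0))=0$, where $\mathcal{G}$ is the field of \eqref{V:G} built from Lipschitz functions $g_1,\dots,g_k$ that may be assumed to satisfy \eqref{eq:WLOG}; the passage from indicator functions to Lipschitz functions and the normalization \eqref{eq:WLOG} are justified exactly as there. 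Feeding the Malliavin-derivative bound \eqref{psi:UB} into the covariance estimate established in the proof of Theorems \ref{th:Poincare} and \ref{th:Poincare:bis} (now with $\bar{\bf f}=|h|*|\tilde{h}|$), and using the Chapman--Kolmogorov property of $\bm{p}$, one obtains, for the constant $c$ of \eqref{psi:UB},
\[
	\Cov\left(\mathcal{G}(x)\,,\mathcal{G}(0)\right)\le c^2\sum_{j,\ell=1}^k\int_0^t
	\left(\bm{p}_{2s}*\bar{\bf f}\right)\left(x+\zeta^j-\zeta^\ell\right)\d s.
\]

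It therefore suffices to prove that $\int_0^t(\bm{p}_{2s}*\bar{\bf f})(w)\,\d s\to0$ as $\|w\|\to\infty$, and I would do this by dominated convergence in $s$. For the majorant, recall from Proposition \ref{pr:PD} that $\bar{\bf f}$ is of positive type, hence nonnegative definite; consequently $\bm{p}_{2s}*\bar{\bf f}$ is continuous and nonnegative definite, so it is maximized at $0$, and $(\bm{p}_{2s}*\bar{\bf f})(w)\le(\bm{p}_{2s}*\bar{\bf f})(0)$ for all $w\in\R^d$ and $s>0$. Since $\int_0^\infty\e^{-2\lambda s}\bm{p}_{2s}(z)\,\d s=\tfrac12\bm{v}_\lambda(z)$ for $\lambda>0$,
\[
	\int_0^t\left(\bm{p}_{2s}*\bar{\bf f}\right)(0)\,\d s\le\e^{2\lambda t}
	\int_0^\infty\e^{-2\lambda s}\left(\bm{p}_{2s}*\bar{\bf f}\right)(0)\,\d s
	=\tfrac12\e^{2\lambda t}\left(\bm{v}_\lambda*\bar{\bf f}\right)(0),
\]
which is finite by Lemma \ref{lem:Dalang:Lp}; this is the required $s$-integrable dominating function.

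For the pointwise-in-$s$ limit, fix $s>0$ and split $\bar{\bf f}=\bar{\bf f}\bm{1}_{\mathbb{B}_1}+\bar{\bf f}\bm{1}_{\mathbb{B}_1^c}$. The first summand lies in $L^1(\R^d)$ by the local integrability furnished by Lemma \ref{lem:F_p}, and since $\bm{p}_{2s}(z)\to0$ as $\|z\|\to\infty$, the convolution $(\bm{p}_{2s}*(\bar{\bf f}\bm{1}_{\mathbb{B}_1}))(w)$ tends to $0$ as $\|w\|\to\infty$. The second summand is bounded and vanishes at infinity by parts 2 and 3 of Definition \ref{def:PT}, and convolving a bounded function that vanishes at infinity with the probability density $\bm{p}_{2s}$ again produces a function vanishing at infinity (split the convolution integral over $\{\|y\|\le R\}$ and its complement, and let $\|w\|\to\infty$ followed by $R\to\infty$). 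Hence $(\bm{p}_{2s}*\bar{\bf f})(x+\zeta^j-\zeta^\ell)\to0$ for each fixed $s\in(0\,,t)$ as $\|x\|\to\infty$, and dominated convergence then yields \eqref{enough}, which completes the proof. I expect the only mildly delicate point to be precisely this interchange of limit and integral — verifying the fixed-$s$ decay of $\bm{p}_{2s}*\bar{\bf f}$ via the near/far split of $\bar{\bf f}$ together with the $s$-integrable bound $(\bm{p}_{2s}*\bar{\bf f})(0)$; everything else is a direct transcription of the proof of Corollary \ref{mix1}.
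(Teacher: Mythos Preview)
Your proof is correct and takes a genuinely different route from the paper's. The paper reduces Corollary \ref{mix2} to showing that $(\bm{v}_1*\bar{\bf f})(x)\to 0$ as $\|x\|\to\infty$ (the analogue of \eqref{cond:mix} with $\bar{\bf f}=|h|*|\tilde{h}|$), and then proves this by splitting the convolution integral over $\{\|y\|<\|x\|/2\}$ and its complement, further splitting the second region, and invoking explicit Bessel-function asymptotics for $\bm{v}_1$ (the bound $K_{(d-2)/2}(a)\lesssim a^{-1/2}\e^{-a}$). You instead stop one step earlier at the time-integral $\int_0^t(\bm{p}_{2s}*\bar{\bf f})(w)\,\d s$ and handle it by dominated convergence in $s$: the majorant $s\mapsto(\bm{p}_{2s}*\bar{\bf f})(0)$ is integrable via Lemma \ref{lem:Dalang:Lp}, and for fixed $s$ you get the pointwise decay from the near/far decomposition of $\bar{\bf f}$, using only that $\bm{p}_{2s}$ is a bounded probability density vanishing at infinity and that $\bar{\bf f}$ is locally integrable and of positive type. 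Your argument is more elementary---no special-function estimates are needed---while the paper's argument yields the slightly stronger conclusion that $\bm{v}_\lambda*\bar{\bf f}$ itself vanishes at infinity, which is a tidier standalone statement but not needed for mixing.
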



\begin{proof}
	We go careful over the proof of the preceding and adapt
	using Theorem \ref{th:Poincare:bis} instead of
	Theorem \ref{th:Poincare} (as we have in the transition
	from the proof of Theorem \ref{th:1} to that of Theorem \ref{th:main:intro}) in order to
	see that Corollary \ref{mix2} follows once we can establish that
	\begin{equation}\label{game}
		\lim_{\|x\|\to\infty} \int_{\R^d}\bm{v}_1(y)\left( |h|*|\tilde{h}|\right)(x-y)\,\d y
		=0.
	\end{equation}
	On one hand,
	since $|h|*|\tilde{h}|$ vanishes uniformly at infinity [Proposition
	\ref{pr:PD}] and $\bm{v}_1$ is a probability density function,
	\[
		\int_{\|y\|<\|x\|/2}\bm{v}_1(y)\left( |h|*|\tilde{h}|\right)(x-y)\,\d y
		\le\sup_{\|w\|>\|x\|/2}\left( |h|*|\tilde{h}|\right)(w)=o(1)
		\qquad\text{as $\|x\|\to\infty$}.
	\]
	On the other hand, a similar argument shows that
	\begin{align*}
		&\int_{\|y\|>\|x\|/2}\bm{v}_1(y)\left( |h|*|\tilde{h}|\right)(x-y)\,\d y\\
		&= \int_{\substack{\|y\|>\|x\|/2\\\|y-x\|<\|x\|/2}}\bm{v}_1(y)
			\left( |h|*|\tilde{h}|\right)(x-y)\,\d y
			+ \int_{\substack{\|y\|>\|x\|/2\\\|y-x\|>\|x\|/2}}\bm{v}_1(y)
			\left( |h|*|\tilde{h}|\right)(x-y)\,\d y\\
		&= \int_{\substack{\|y\|>\|x\|/2\\\|y-x\|<\|x\|/2}}\bm{v}_1(y)
			\left( |h|*|\tilde{h}|\right)(x-y)\,\d y
			+ o(1)\qquad\text{as $\|x\|\to\infty$}.
	\end{align*}
	Let $\Xi(x)$ denote the final integral in the above. It remains to prove that
	$\Xi(x)\to0$ as $\|x\|\to\infty$.
	Since $\bm{v}_1(y)$ decreases monotonically as $\|y\|$
	increases,
	\[
		\Xi(x)\le \mathcal{V}(\|x\|/2)\int_{\|y\|<\|x\|/2}
		\left( |h|*|\tilde{h}|\right)(y)\,\d y,
	\]
	where
	\[
		\mathcal{V}(a) = \frac{1}{(2\pi)^{d/2}}\int_0^\infty
		s^{-d/2}\exp\left( -s-\frac{a^2}{2s}\right)\d s
		\propto a^{-(d-2)/2} K_{(d-2)/2}\left(
		a\sqrt 2\right),
	\]
	for all $a>0$ and $K_\nu:=$ modified Bessel function of the second kind.
	Elementary asymptotic evaluations imply that
	$K_{(d-2)/2}(a)\lesssim a^{-1/2}\exp(-a)$ for all $a>1$ (see \cite[10.25.3]{OLBC10}),
	whence $\mathcal{V}(a)\lesssim a^{-(d-1)/2}\exp(-a\sqrt 2)$.
	Consequently,
	\[
		\Xi(x)\lesssim \e^{-\|x\|\sqrt 2}\int_{\|y\|<\|x\|/2}
		\left( |h|*|\tilde{h}|\right)(y)\,\d y,
	\]
	uniformly for all $x\in\mathbb{B}_2^c$ (with room to spare).
	Because $c := \sup_{y\in\mathbb{B}_1^c}(|h|*|\tilde{h}|)(y)$ is finite
	(see Proposition \ref{pr:PD}),
	\[
		\int_{\|y\|<\|x\|/2}
		\left( |h|*|\tilde{h}|\right)(y)\,\d y
		\le \int_{\mathbb{B}_1}
		\left( |h|*|\tilde{h}|\right)(y)\,\d y +c\int_{1<\|y\|<\|x\|/2}\d y
		\lesssim \|x\|^d,
	\]
	as $\|x\|\to\infty$. This proves that
	$\Xi(x) \lesssim \|x\|^d \exp(-\|x\|\sqrt 2) = o(1),$
	as $\|x\|\to\infty$, which completes the proof.
\end{proof}

\subsection{Intermittency}\label{subsec:interm}

In this final section we include an additional application of our
Poincar\'e inequalities. In order to simplify the exposition, we
consider  \eqref{SHE} in the case of the \emph{parabolic Anderson model}
driven by space-time white noise.
That is, we propose to study the SPDE,
\begin{equation}\label{PAM}
	\partial_t u = \tfrac12\partial^2_x u + u\eta\qquad\text{%
	on $(0\,,\infty)\times\R$},
\end{equation}
subject to $u(0)\equiv1$, where
\[
	\E[\eta(t\,,x)\eta(s\,,y)] = \delta_0(t-s)\delta_0(x-y)
	\qquad\text{for every $s,t>0$ and $x,y\in\R$}.
\]

It is well known that $u(t\,,x)>0$ for all $t>0$ and $x\in\R$ off a single $\P$-null set
(see Mueller \cite{Mueller1991,Mueller2009}),
and that the solution is unbounded at all times $t>0$, viz.,
\begin{equation}\label{blowup}
	\lim_{N\to\infty} \sup_{x\in[0,N]}
	u(t\,,x)=\infty\qquad\text{a.s.\ for every $t>0$}.
\end{equation}
In fact, Chen \cite[Theorem 1.7]{XC16} has
established the following improvement of \eqref{blowup}:
\begin{equation}\label{sup}
	\lim_{N\to\infty}\sup_{x\in[0,N]}\frac{\log u(t\,,x)}{(\log N)^{2/3}}
	=  \frac34\left(\frac{2t}{3}\right)^{2/3}\qquad\text{a.s.}\footnote{Chen \cite{XC16}
	proves this fact with $\sup_{x\in[-N,N]}u(t\,,x)$ in place of
	$\sup_{x\in[0,N]}u(t\,,x)$. The
	present statement is proved in the same way, however.}
\end{equation}

Conus et al \cite{CJK12} have studied the Lebesgue measure
of the set of $x\in[0\,,N]$ where
$u(t\,,x)$ is almost as tall as the maximum possible, as given in \eqref{sup}.
The following verifies one of their conjectures;
see \cite[see  (1.5)]{CJK12}.

\begin{corollary}\label{co:Islands}
	Choose and fix some $t>0$, and define
	$d(\alpha) := 4\alpha 3^{-3/2}\sqrt{6/t}$ for all $\alpha>0$.
	Whenever $d(\alpha)<1/2$, the following holds almost surely:
	\begin{equation}\label{Dim}
		\lim_{N\to\infty}
		\frac{1}{\log N}\log\left(\int_0^N \bm{1}_{\{ u(t,x) >
		\exp[(\alpha\log N)^{2/3}]\}}\,\d x\right)
		= 1 - d(\alpha).
	\end{equation}
\end{corollary}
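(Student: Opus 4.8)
Write $\lambda_N:=\exp[(\alpha\log N)^{2/3}]$ and $G_N:=\int_0^N\bm{1}_{\{u(t\,,x)>\lambda_N\}}\,\d x$, so that \eqref{Dim} is the assertion that $(\log N)^{-1}\log G_N\to 1-d(\alpha)$ a.s. The plan is to sandwich $G_N$ between two functionals of expectation $N^{1-d(\alpha)+o(1)}$, deduce bounds that hold with high probability, and then upgrade them to almost-sure statements along the geometric sequence $N_n:=\rho^n$ (for an arbitrary fixed $\rho>1$), filling the gaps $N\in[N_n,N_{n+1}]$ by the monotonicities of $N\mapsto\lambda_N$ and $N\mapsto[0,N]$ together with the fact that $\log N_{n+1}\sim\log N_n$. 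The one external ingredient I will invoke is the pointwise upper-tail estimate for the parabolic Anderson model,
\[
	\log\P\big(u(t\,,0)>\lambda\big)=-\left(\tfrac43\sqrt{2/t}+o(1)\right)(\log\lambda)^{3/2}
	\qquad\text{as }\lambda\to\infty,
\]
which follows from the moment asymptotics $\E[u(t\,,x)^k]=\exp\big(\tfrac{t}{24}k^3(1+o(1))\big)$ (as $k\to\infty$) by Markov's inequality and a Paley--Zygmund argument at the level of moments; see Chen \cite{XC16} and Conus et al \cite{CJK12}. Since $(\log\lambda_N)^{3/2}=\alpha\log N$, this reads $\P(u(t\,,0)>\lambda_N)=N^{-d(\alpha)+o(1)}$, and spatial stationarity gives $\E[G_N]=N\,\P(u(t\,,0)>\lambda_N)=N^{1-d(\alpha)+o(1)}$.

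For the upper bound in \eqref{Dim} I would use only the first moment. Markov's inequality gives $\P(G_N>N^{1-d(\alpha)+\varepsilon})\le N^{-\varepsilon+o(1)}$, summable along $N_n=\rho^n$, so $G_{N_n}\le N_n^{1-d(\alpha)+\varepsilon}$ holds eventually a.s.; and since the threshold is nondecreasing, $G_N\le\int_0^{N_{n+1}}\bm{1}_{\{u(t\,,x)>\lambda_{N_n}\}}\,\d x$ for $N\in[N_n,N_{n+1}]$, an auxiliary quantity of mean $N_{n+1}^{1-d(\alpha)+o(1)}$ (using $\log N_{n+1}\sim\log N_n$), so the same Markov/Borel--Cantelli step yields $\limsup_{N\to\infty}(\log N)^{-1}\log G_N\le 1-d(\alpha)$ a.s. No Poincar\'e inequality is needed here.

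The lower bound is where Theorem \ref{th:Poincare} enters, and is the more substantive half. I would fix a continuous $g_N:\R\to[0\,,1]$ that vanishes on $(-\infty,\lambda_N]$, equals $1$ on $[2\lambda_N,\infty)$, and is affine in between, so that $g_N(0)=0$, $\lip(g_N)=\lambda_N^{-1}\le 1$, and $\bm{1}_{\{\cdot>2\lambda_N\}}\le g_N\le\bm{1}_{\{\cdot>\lambda_N\}}$. Then $\int_0^N g_N(u(t\,,x))\,\d x\le G_N$, while
\[
	\E\int_0^N g_N(u(t\,,x))\,\d x\ge N\,\P\big(u(t\,,0)>2\lambda_N\big)=N^{1-d(\alpha)+o(1)},
\]
because $\log(2\lambda_N)=(\alpha\log N)^{2/3}(1+o(1))$. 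For space-time white noise the spatial correlation is $f=\delta_0$, so $f([-N,N])=1$; applying Theorem \ref{th:Poincare} with $k=1$, $\zeta^1=0$, and the rescaled function $\lambda_N g_N$ (which satisfies \eqref{eq:WLOG}) gives
\[
	\Var\!\left(\int_0^N g_N(u(t\,,x))\,\d x\right)\le C\,N\,[\lip(g_N)]^2=C\,N\,\lambda_N^{-2}\le CN .
\]
Chebyshev's inequality then yields, for small $\varepsilon>0$,
\[
	\P\!\left(\int_0^N g_N(u(t\,,x))\,\d x<\tfrac12 N^{1-d(\alpha)-\varepsilon}\right)\le \frac{4CN}{N^{2(1-d(\alpha)-\varepsilon)}}=4C\,N^{-1+2d(\alpha)+2\varepsilon},
\]
and the exponent $-1+2d(\alpha)+2\varepsilon$ is negative for $\varepsilon$ small precisely \emph{because} $d(\alpha)<1/2$ --- this is the sole place the hypothesis is used. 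The right-hand side is therefore summable along $N_n=\rho^n$; Borel--Cantelli, combined with the same gap-filling device (replacing $\lambda_N$ by $\lambda_{N_{n+1}}$, which alters exponents only by $o(1)$), gives $\liminf_{N\to\infty}(\log N)^{-1}\log G_N\ge 1-d(\alpha)$ a.s. Together with the upper bound this proves \eqref{Dim}.

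I expect the main difficulty to be bookkeeping rather than conceptual: one must confirm that the pointwise upper-tail exponent of $u(t\,,0)$, after composition with the threshold $\lambda_N=\exp[(\alpha\log N)^{2/3}]$, reproduces exactly the function $d(\alpha)=4\alpha 3^{-3/2}\sqrt{6/t}$, and then carry the $o(1)$'s cleanly through the Borel--Cantelli and gap-filling steps. The crucial structural input is that Theorem \ref{th:Poincare} improves the trivial variance bound $O(N^2)$ to $O(N)$; this gain is exactly what lets the second-moment method succeed under $d(\alpha)<1/2$.
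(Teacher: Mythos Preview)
Your proposal is correct and follows essentially the same route as the paper: Lipschitz approximations of the indicator, Chen's tail asymptotic to compute the mean, Theorem~\ref{th:Poincare} (with $f=\delta_0$, so $f([-N,N])=1$) to bound the variance by $O(N)$, then Chebyshev and Borel--Cantelli along a geometric subsequence. The only notable difference is that the paper uses slope-$1$ Lipschitz functions $g_N(z)=1\wedge(z-a_N)_+$ and $G_N(z)=1\wedge(z-a_N+1)_+$ and applies the Poincar\'e/Chebyshev concentration for \emph{both} directions, whereas you handle the upper bound by Markov's inequality alone (first moment only) and reserve the variance bound for the lower half --- a minor but genuine simplification, since indeed the Poincar\'e inequality is needed only to certify the lower bound. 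Your choice of a slope-$\lambda_N^{-1}$ approximant on $[\lambda_N,2\lambda_N]$ versus the paper's slope-$1$ approximant on $[a_N,a_N+1]$ is immaterial: both yield variance $\le CN$ and mean $N^{1-d(\alpha)+o(1)}$, and you correctly identify that the hypothesis $d(\alpha)<1/2$ enters precisely to make the Chebyshev error $N^{-1+2d(\alpha)+2\varepsilon}$ summable along $\rho^n$.
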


The quantity on the left-hand side of \eqref{Dim} is a kind of ``macroscopic fractal dimension''
for the set $\mathscr{P}(\theta)$ of $x\in\R$ such that $u(t\,,x)$ exceeds
$\exp\{\theta(\log |x|)^{2/3}\}$.
In this way we can see that the ``fractal dimension formula'' \eqref{Dim} yields a ``codimension
formula'' for the
macroscopic Hausdorff dimension of $\mathscr{P}(\theta)$; this
should be compared to the related dimension formulas of Khoshnevisan, Kim,
and Xiao \cite[Theorem 1.2]{KKX17}.

\begin{proof}
	Before we begin,  let us observe that Theorem \ref{th:Poincare} implies that\footnote{%
		Theorem \ref{th:Poincare} requires also that $g(0)=0$. We obtain \eqref{eq:PP} by
		replacing $g$ by $g-g(0)$, without altering the value of the variance.
	}
	\begin{equation}\label{eq:PP}
		\sup_{g:\, \text{\rm Lip}(g)\le 1}\,\sup_{N>0}
		\Var\left( \frac{1}{\sqrt N}\int_0^N g(u(t\,,x))\,\d x\right) <\infty.
	\end{equation}
	Now we choose and fix $\alpha>0$, and define $a_N := \exp\{ (\alpha\log_+ N)^{2/3}\}$
	for every $N>0$.
	We plan to apply \eqref{eq:PP} with $g$ replaced by either $g_N$ or $G_N$,
	where
	\[
		G_N (z) := 1\wedge (z - a_N +1)_+
		\quad\text{and}\quad
		g_N (z) := 1\wedge (z - a_N)_+.
	\]
	According to Theorem 5.5 of Chen \cite{XC16},
	$a^{-3/2}\log\P\{u(t\,,0) > \e^a\} \to-d(1)$ as $a\to\infty$. Because
	\begin{equation}\label{gG}
		g_N \le \bm{1}_{[a_N,\infty)} \le G_N\qquad\text{for every $N>0$},
	\end{equation}
	it immediately follows from stationarity that, as $N\to\infty$,
	\[
		\E\int_0^N g_N(u(t\,,x))\,\d x  =N^{1-d(\alpha)+o(1)}
		\quad\text{and}\quad
		\E\int_0^N G_N(u(t\,,x))\,\d x =N^{1-d(\alpha)+o(1)}.
	\]
	On the other hand, both $g_N$ and $G_N$ are 1-Lipschitz. Therefore,
	\eqref{eq:PP} holds when $g=g_N$ as well as $g=G_N$. Because of this,
	Chebyshev's inequality ensures that for every fixed $\epsilon\in(0\,,1)$,
	\[
		\P\left\{ \left| \int_0^N g_N(u(t\,,x))\,\d x
		- \E\int_0^N g_N(u(t\,,x))\,\d x\right|
		>\epsilon \E\int_0^N g_N(u(t\,,x))\,\d x\right\}
		\le N^{-1+2d(\alpha)+o(1)},
	\]
	as $N\to\infty$. And the same estimate is valid when we replace $g_N$ everywhere
	by $G_N$. These facts and \eqref{gG} together show that, if
	$2d(\alpha)<1$, then \eqref{Dim} holds
	in probability. A standard subsequencing and blocking
	argument can  be used to prove a.s.-convergence in \eqref{Dim}.
	We skip this part, although we caution that
	some care is required in order to carry this out properly. This concludes the proof.
\end{proof}

\end{document}